\documentclass[12pt]{gtart}

%

\usepackage{amsmath, amssymb, latexsym, amscd, graphicx, epstopdf}
\usepackage{euscript}
\usepackage{enumitem}
\usepackage{subfigure}
\usepackage{epsfig}
\usepackage{psfrag}
\usepackage[makeroom]{cancel}
\usepackage{caption}
\usepackage[inner=30mm, outer=30mm, textheight=225mm]{geometry}

\usepackage[colorlinks,citecolor=blue,linkcolor=blue, backref=page]{hyperref}
\usepackage[nameinlink]{cleveref}

\usepackage{multicol}

\usepackage[colorinlistoftodos]{todonotes}
\setlength{\marginparwidth}{2cm}

\makeatletter
\newtheorem*{rep@theorem}{\rep@title}
\newcommand{\newreptheorem}[2]{%
\newenvironment{rep#1}[1]{%
 \def\rep@title{#2 \ref{##1}}%
 \begin{rep@theorem}}%
 {\end{rep@theorem}}}
\makeatother

%
%

%
\newtheorem{theorem}{Theorem}
\newreptheorem{theorem}{Theorem}
\newtheorem{corollary}[theorem]{Corollary}
\newtheorem{lemma}[theorem]{Lemma}
\newtheorem{algorithm}[theorem]{Algorithm}

\theoremstyle{definition}
\newtheorem{defn}[theorem]{Definition}

\newtheorem{claim}[theorem]{Claim}

\newtheoremstyle{TheoremNum}
        {14pt plus6.3pt minus6.3pt}{7.4pt plus3pt minus3pt}              
        {\sl}                      
        {}                              
        {\bf}                     
        {}                             
       {0.75em}                             
        {\thmname{#1}\thmnote{ \bfseries #3}}
    \theoremstyle{TheoremNum}

%
%

%

\newcommand{\R}{\mathbb{R}}
\newcommand{\regina}{{\tt Regina}\rm}

\newcommand{\tri}{\mathcal{T}}

\newcommand{\Z}{\mathbb{Z}}

\DeclareMathOperator{\wt}{wt}

\def\comp{{\backslash\kern-.32em\backslash}}

%
%

\begin{document}
\title[Computing closed essential surfaces]{Computing closed essential surfaces in 3--manifolds}
\author{Benjamin A. Burton and Stephan Tillmann}

\begin{abstract}
We present a practical algorithm to test whether a 3--manifold given by a triangulation or an ideal triangulation contains a closed essential surface.
This property has important theoretical and algorithmic consequences.
As a testament to its practicality, we run the algorithm over
a comprehensive body of closed 3--manifolds and knot exteriors, yielding results that were not previously known.
    
The algorithm derives from the original Jaco-Oertel framework,
involves both enumeration and optimisation procedures, and combines
several techniques from normal surface theory. Our methods are relevant for other
difficult computational problems in 3-manifold theory, such as the recognition problem for knots, links and 3--manifolds.
\end{abstract}
\primaryclass{57M25, 57N10}
\keywords{3-manifold, knot, normal surface, essential surface, incompressible surface, algorithm}

%
%

\maketitle

\subsection*{Disclaimer}

The \emph{Journal of Applied and Computational Topology} published a version of this paper on 27 June 2025 that was not approved by the authors and can be accessed at this link:
\begin{center}
{\tt https://doi.org/10.1007/s41468-025-00208-w}\rm
\end{center}
The production process of Springer Nature introduced several errors that were not corrected. These mainly pertain to references (including authors and titles) and how they are cited. Readers are advised to refer to the most current arXiv version of this paper. 

\section{Introduction}

In the study of 3--manifolds, essential surfaces have been of central
importance since Haken's seminal work in the 1960s. 
An essential surface may be regarded as `topologically minimal', and there has since been extensive research  into 3--manifolds, called \emph{Haken 3--manifolds}, that contain an essential surface. The existence of such a surface has
profound consequences for both the topology and geometry of a 3--manifold
\cite{finkelstein99-tubed, haken68-overview, jaco79-jsj-book,
johannson79-book, johannson79-mapping, menasco84-alternating,
oertel84-star, thurston86-deformation, waldhausen68-large}.

Given any closed 3--manifold,
specified by a handle decomposition or triangulation, it is a theorem of Jaco and Oertel
\cite{jaco84-haken} from 1984 that one may algorithmically test for the
existence of a closed essential surface. However, their algorithm
has significant intricacies and is of doubly-exponential complexity in terms of the input size.
One of the key messages of this paper is that this need not be a deterrent: with the right heuristics and careful algorithm engineering, theoretically intractable problems such as this can be implemented and automated over large sets of data.

The issue of iterated-exponential complexity, coming from
cutting and re-triangulating, arises with ubiquity when considering objects
called \emph{normal hierarchies}. These hierarchies are key when solving
more difficult problems (such as the homeomorphism problem) via Haken's approach.
Our strategy in this paper is both fast in
practice and always correct and conclusive, indicating that, despite their iterated-exponential time complexities,
practical implementations of these more difficult
algorithms might indeed be possible.

In the remainder of this introduction, we give a summary of the theoretical results (\Cref{subsec:summary-theory}), an overview of the algorithms (\Cref{subsec:Overview of the algorithms}), and a summary of the computational results (\Cref{subsec:summary-comp}).


\subsection{Summary of the theoretical results}
\label{subsec:summary-theory}

In the short conference paper \cite{burton12-large} written with Alexander Coward, we outlined a practical implementation of the Jaco-Oertel framework for the case of
finding a closed essential surface in a knot exterior in $S^3,$ specified by an ideal triangulation. There, we introduced the bare minimum of the new theory and algorithms required for this application, not including full proofs of many of the results stated. In this paper, we generalise our results and algorithms to triangulations of closed, irreducible, orientable 3--manifolds and to ideal triangulations of the interiors of compact, irreducible and $\partial$--irreducible, orientable 3--manifolds with non-empty boundary. We thank Alexander Coward for his contributions in the early stages of this project.

We base our work on the framework of the Jaco-Oertel algorithm for
testing for closed incompressible surfaces.  This uses
\emph{normal surfaces}, which allow us to translate topological
questions about surfaces into the setting of integer and linear programming.
The framework consists of two stages: the first constructs a finite list of
candidate essential surfaces, and the second tests each surface in the
list to see if it is essential. 

For the {first stage} (enumerating candidate essential surfaces), we combine several techniques. 

First, we wish to create a triangulation for each manifold that contains as few tetrahedra as possible. This is achieved for closed 3--manifolds through the use of \emph{singular triangulations} instead of simplicial ones. For compact manifolds with non-empty boundary we use \emph{ideal triangulations}, which are decompositions of the interiors of these spaces into tetrahedra with their vertices removed. Ideal triangulations introduce some theoretical difficulties, but they are much smaller with roughly half as
many tetrahedra.

Second, we use a variant of normal surface theory based on \emph{quadrilateral coordinates}.
Instead of the standard coordinates for normal surfaces involving four triangles and three quadrilaterals per tetrahedron, we work in the more efficient coordinate system only using the quadrilateral coordinates. 
These coordinates were known to Thurston and Jaco
in the 1980s, and first appeared in print in work of
Tollefson~\cite{tollefson98-quadspace}.
In an ideal triangulation $\tri$ with non-spherical vertex links, this coordinate system encodes both closed normal surfaces and spun-normal surfaces, which are properly embedded and non-compact. The coordinate $x(F)$ of a closed normal surface $F$ in this \emph{spun-normal surface cone $Q(\tri)$} lies in its intersection $Q_0(\tri)$ with a subspace corresponding to the kernel of a \emph{boundary map} (see \Cref{subsec:Boundary map}). 

The following result, proven in \Cref{subsec:Least weight essential surfaces,subsec:proof-of-J-O}, is based on the seminal work of Jaco and Oertel~\cite{jaco84-haken} and provides our finite, constructible set of candidate surfaces. An analogous result was proven by Tollefson for simplicial triangulations of closed 3--manifolds. The concept of a \textbf{mixed triangulation} unifies (possibly singular) triangulations of closed 3--manifolds and ideal triangulations of the interior or a compact manifold with non-empty boundary: here, we allow both \textbf{material} vertices\footnote{The term \emph{material} was coined by Henry Segerman.} (whose links are spheres) and \textbf{ideal} vertices (whose links are orientable surfaces of positive genus).

\begin{theorem}\label{thm-mixed:some extremal is closed essential}
Suppose $M$ is the interior of an irreducible and $\partial$--irreducible, compact, orientable 3--manifold with (possibly empty) boundary, and let $\tri$ be a mixed triangulation of $M.$ If $M$ contains a closed essential surface $S,$ then there is a normal closed essential surface $F$ with the property that $x(F)$ lies on an extremal ray of $Q_0(\tri).$ 

Moreover, if $\chi(S)<0$, then there is such $F$ with $\chi(F)<0$. If, in addition, the link of each ideal vertex has zero Euler characteristic, then if $\chi(S)=0$, then there is such $F$ with $\chi(F)=0.$
\end{theorem}

The hypothesis of the theorem implies that the link of a vertex in $\tri$ is a sphere if and only if the vertex is material. For convenient referencing, we restate the above result in the cases where either all vertices are material or all vertices are ideal. These are the results required for the algorithms of this paper.

\begin{corollary}\label{thm:Tollefson-singular}
Let $M$ be a triangulated, closed, orientable, irreducible 3-manifold. If there exists an essential surface $S$ in $M$, then there is a normal essential surface $F$ with the property that $x(F)$ lies on an extremal ray of $Q_0(\tri)=Q(\tri).$ 

Moreover, if $\chi(S)<0$ (resp.\thinspace $\chi(S)=0$), then there is such $F$ with $\chi(F)<0$ (resp.\thinspace $\chi(F)=0$).
\end{corollary}

\begin{corollary}\label{thm:some extremal is closed essential}
Suppose $M$ is the interior of an irreducible and $\partial$--irreducible, compact, orientable 3--manifold with non-empty boundary, and let $\tri$ be an ideal triangulation of $M.$ If $M$ contains a closed, essential surface $S,$ then there is a normal, closed essential surface $F$ with the property that $x(F)$ lies on an extremal ray of $Q_0(\tri).$ 

Moreover, if $\chi(S)<0$, then there is such $F$ with $\chi(F)<0$. If, in addition, the link of each ideal vertex has zero Euler characteristic, then if $\chi(S)=0$, then there is such $F$ with $\chi(F)=0.$
\end{corollary}

The example in \Cref{subsec:example} of a triangulation $\tri_M$ of a trivial circle bundle over a once-punctured surface of genus two shows that \Cref{thm:some extremal is closed essential} is not correct if $Q_0(\tri)$ is replaced by $Q(\tri).$ It also shows that our approach in \Cref{thm:some extremal is closed essential} is optimal in the following sense. 
Algorithms involving normal surfaces need to reduce the search space to a finite constructible set of solutions in a cone. Typically, one chooses the set of extremal or the set of fundamental solutions. All fundamental solutions of $Q(\tri_M)$ are extremal, and they are either spun-normal surfaces or so-called \emph{thin-edge links}, that is, closed surfaces that after one compression give a boundary parallel torus. In particular, no essential torus is amongst the fundamental solutions in $Q(\tri_M).$ Hence it is necessary to consider the space $Q_0(\tri_M).$ We thank Mark Bell for creating $\tri_M$ for us with {\tt flipper}~\cite{flipper}.


\subsection{Overview of the algorithms}
\label{subsec:Overview of the algorithms}

We describe our algorithms to decide the existence of closed essential surfaces in \Cref{sec:Detecting compressing discs}.
A key difficulty with the Jaco-Oertel framework, which our algorithms also inherit,
is that both stages have running times that are worst-case exponential in their respective input sizes. Moreover, the
output of the first stage (enumerating candidate essential surfaces) is exponential in its input, and this then becomes
the input to the second stage (testing whether a candidate surface is essential). This means that
combining the two stages in any obvious way leads to a doubly-exponential
time complexity solution.

Despite this significant hurdle, we introduce several innovations that cut
down the running time enormously for both stages. Our optimisation for the
first stage involves a combination of established techniques that, though well
understood individually, require new ideas and theory in order to
work harmoniously together.
For the second stage we combine branch-and-bound techniques from
integer programming with the Jaco-Rubinstein procedure for crushing
surfaces within triangulations,
extending recent work of the first author and Ozlen \cite{burton12-unknot}.
 
 The innovations for the {second stage} (testing whether a candidate surface is essential) are of particular significance,
 since there has never before been a systematic algorithm for
 testing whether a candidate surface is essential that is both practical
 and always conclusive. 
Here, the Jaco-Oertel approach cuts along each candidate surface and
inspects the boundary of the resulting 3--manifold to see if it admits a
\emph{compression disc} (such a disc certifies that a surface is non-essential).
The key difficulty is that one requires a new triangulation for the cut-open
3--manifold: since the candidate surface may be very complicated,
any natural scheme for cutting and re-triangulating yields a new triangulation
with exponentially many tetrahedra in the worst case, taking us far
beyond the realm in which normal surface theory has traditionally been
feasible in practice.
Since these new triangulations are the input for stage two,
which is itself exponential time, we now see where the double
exponential arises, and why the Jaco-Oertel framework has long been
considered far from practical. 

We resolve this significant problem using a blend of techniques.
First, we use strong simplification heuristics to reduce the number of
tetrahedra.  Next, we replace the traditional (and very expensive)
enu\-me\-ra\-tion-based search for compression discs with an
\emph{optimisation} process that maximises Euler characteristic.
This uses the branch-and-bound techniques of \cite{burton12-unknot},
and allows us to quickly focus on a single
candidate compression disc.  We employ the crushing techniques
of Jaco and Rubinstein \cite{jaco03-0-efficiency} to quickly test
whether this is indeed a compression disc, and (crucially) to reduce the
size of the triangulation if it is not.

\subsection{Summary of the computational results}
\label{subsec:summary-comp}

In this paper we present a \emph{practical} algorithm that,
though still doubly-exponential in theory,
is able to systematically test a significant class of 3--manifolds
for the \emph{existence} of a closed essential surface, and is both
efficient in practice and always conclusive.
To illustrate its power, we run this algorithm over a comprehensive
body of input data, yielding computer proofs of new mathematical
results. 

\begin{figure}[h]
\begin{center}
  \includegraphics[width=6.8cm]{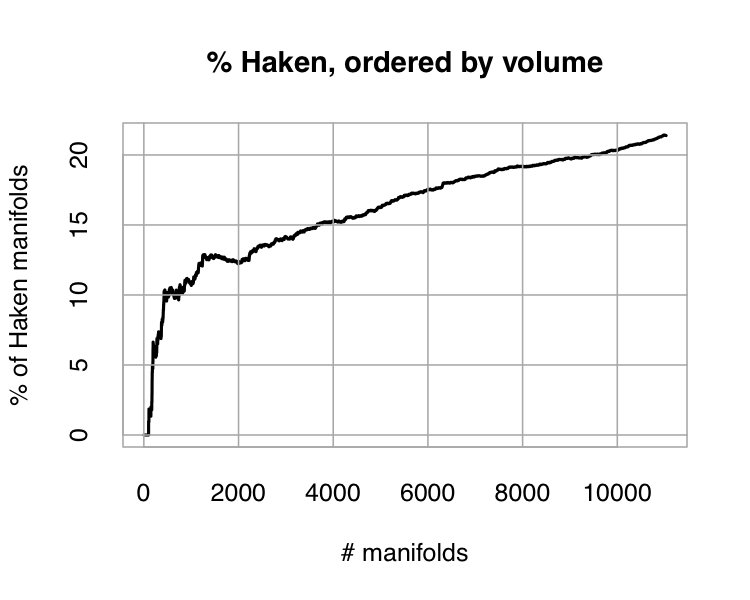}
  \includegraphics[width=6.8cm]{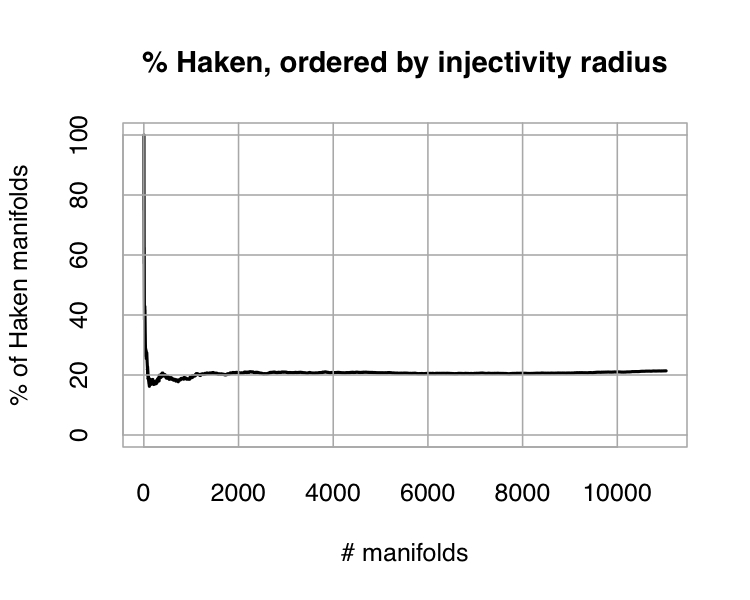}
\end{center}
\caption{Haken manifolds in the Hodgson-Weeks census}
\label{fig:hw-percent}
\end{figure}

We first consider the Hodgson-Weeks census, which contains 11,031 closed, orientable 3--manifolds. This is an approximation to the set of all hyperbolic 3--manifolds of volume $\le 6.5$ and length of shortest geodesic $\ge 0.15.$ The number of tetrahedra ranges from $9$ to $32$. The first step in our algorithm is to check whether the existence of a closed essential surfaces already follows from the fact that the first Betti number is positive. Only 132 of the census manifolds ($\sim$1\%) have positive Betti number, and hence are Haken for this reason. Dunfield used an implementation of the Jaco-Oertel algorithm in 1999 to compute that only 15 of the first 246 census manifolds (246/15 $\sim$ 6\%) are Haken. Our computation gave the surprising result that the percentage of Haken manifolds in the Hodgson-Weeks census is about 21\%, see \Cref{fig:hw-percent}. Further analysis of the computation is given in \Cref{subsec:HW census}.

\begin{figure}[h]
    \centering
     \subfigure[Cumulative number of knots by volume]{%
        \label{fig:htw-byvol-large}%
  \includegraphics[width=6cm]{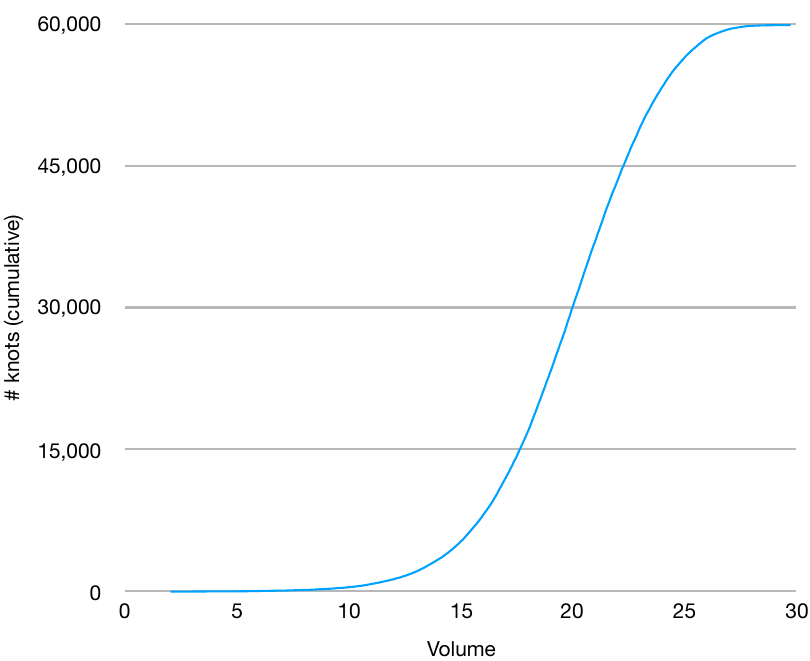}}
  \qquad\qquad
      \subfigure[Cumulative percentage of large knots]{%
        \label{fig:htw-byvol-knotcount}%
  \includegraphics[width=6cm]{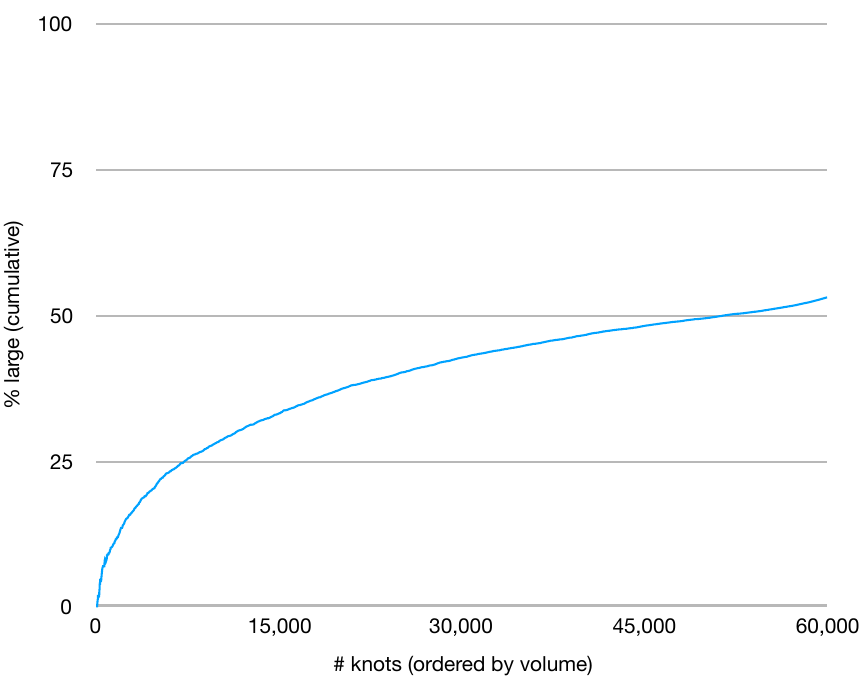}}
  \caption{Percentage of large knots by volume}
\label{fig:htw-byvol}
\end{figure}

The next census we consider is the census of all knots in the 3--sphere with at most 14 crossings due to Hoste-Thistlethwaite-Weeks~\cite{hoste98-first}. The authors thank Morwen Thistlethwaite for sharing these data with them, sorted into torus knots, satellite knots and hyperbolic knots. There are 59,924 hyperbolic knots with at most 14 crossings, and the number of ideal tetrahedra used to triangulate their complements ranges from 2 to 33. In this case, any closed essential surface is separating, and thus the algorithmic detection allows no shortcuts via homology arguments. Our computation showed that 31,805 ($\sim$ 53\%) of the hyperbolic knot complements contain closed essential surfaces (we call these \emph{large knots} for brevity). \Cref{fig:htw-byvol} shows a cumulative plot of the percentage, where the knots in the census are ordered by volume.
Further analysis of the computation is given in \Cref{subsec: knots computation}.

Testament to the improvement in algorithm design is the fact that {\tt Regina}\rm~\cite{regina} can certify the Weber-Seifert dodecahedral space to be non-Haken out of the box in under 75 minutes on a 2016 MacBook Pro with a 3.1 GHz Intel Core i5 processor and 16 GB of 2133 MHz memory. This was a computational challenge due to Thurston when it was first solved in 2012~\cite{burton12-ws}. We caution the reader that running times are subject to random seeds and can vary significantly.


\section{Preliminaries}

In this section, we collect the following well-known definitions and results: singular (possibly ideal) triangulations (\Cref{subsec:Triangulations}); essential surfaces (\Cref{subsec:essential}); a homology criterion for existence of closed essential surfaces (\Cref{subsec:homology criterion}); normal surface theory via quadrilateral coordinates (\Cref{subset:normal surface theory}); reduced form for Haken sums (\Cref{subsec:Reduced form}); the Jaco-Rubinstein method of crushing of triangulations (\Cref{subsec:Crushing}).

\subsection{Triangulations}
\label{subsec:Triangulations}

The notation of \cite{jaco03-0-efficiency} and \cite{tillmann08-finite} will be used in this paper. A \textbf{material triangulation}, $\tri,$ of a compact 3--manifold $M$ consists of a union of pairwise disjoint standard Euclidean 3--simplices, $\widetilde{\Delta} =  \sigma_1\cup \ldots\cup \sigma_t,$ a set of face pairings, $\Phi,$ and a homeomorphism $h\co \widetilde{\Delta} / \Phi \to M.$ We thus make the identification $\widetilde{\Delta} / \Phi = M$ and there is a natural quotient map $p\co \widetilde{\Delta} \to \widetilde{\Delta} / \Phi = M.$ The quotient map is injective on the interior of each 3--simplex. We refer to the image of a 3--simplex in $M$ as a \textbf{tetrahedron} and to \emph{its} faces, edges and vertices with respect to the pre-image. Similarly for images of 2--, 1-- and 0--simplices, which will be referred to as \textbf{faces}, \textbf{edges} and \textbf{vertices} in $M$ respectively. 
If an edge is contained in $\partial M,$ then it is termed a \textbf{boundary edge}; otherwise it is an \textbf{interior edge}. These material triangulations are often called \textbf{singular} or \textbf{pseudo-simplicial} in the literature.

If $M$ is the interior of a compact manifold with non-empty boundary, an \textbf{ideal triangulation}, $\tri,$ of $M$ consists of a union of pairwise disjoint 3--simplices, $\widetilde{\Delta},$ a set of face pairings, $\Phi,$ a natural quotient map $p\co \widetilde{\Delta} \to \widetilde{\Delta} / \Phi = P,$ and a homeomorphism $h \co P \setminus P^{(0)} \to M,$ between $M$ and the complement of the 0--skeleton in $P.$ The quotient space $P$ is usually called a \textbf{pseudo-manifold} and referred to as the \textbf{end-compactification} of $M.$

A natural class of triangulations that includes both of the above types of triangulations is the following. Let $M$ be the interior of a compact manifold with (possibly empty) boundary. A \textbf{mixed triangulation}, $\tri,$ of $M$ consists of a union of pairwise disjoint 3--simplices, $\widetilde{\Delta},$ a set of face pairings, $\Phi,$ a natural quotient map $p\co \widetilde{\Delta} \to \widetilde{\Delta} / \Phi = P,$ a subset $V_i \subseteq P^{(0)}$ of the 0--skeleton in $P$ and a homeomorphism $h \co P \setminus V_i \to M,$ between $M$ and the complement of $V_i$ in $P.$ The vertices in $V_i$ are the \textbf{ideal vertices} of $\tri$ and the vertices in $V_m =  P^{(0)} \setminus V_i$ are the \textbf{material vertices} of $\tri.$

If $V_i = \emptyset$, then $\partial M = \emptyset$ and $\tri$ is a material triangulation. If $V_m = \emptyset,$ then $\partial M \neq \emptyset$ and $\tri$ is an ideal triangulation. Note that $V_i$ is determined by its preimage $\Delta^{(0)}_i = p^{-1}(V_i) \subset \Delta^{(0)}.$

For brevity, we will refer to a 3--manifold $M$ imbued with a mixed triangulation $\tri=(\widetilde{\Delta}, \Delta^{(0)}_i, \Phi, h)$ as a
\textbf{triangulated 3--manifold}. If $M$ is orientable, then we choose an orientation on $M$ and assume that all tetrahedra in $M$ are oriented coherently and the tetrahedra in $\widetilde{\Delta}$ are given the induced orientation.

\subsection{Surfaces in 3-manifolds}
\label{subsec:essential}

Let $M$ be a 3--manifold.
A 2--sphere in $M$ is \textbf{compressible} if it bounds a 3--ball embedded in $M.$ Otherwise the 2--sphere is called \textbf{incompressible}. A 3--manifold in which every 2--sphere is compressible is called \textbf{irreducible}. 

Let $S\subset M$ be a connected surface that is not homeomorphic to the 2--sphere. We say that $S$ is \textbf{compressible} in $M$ if there is a disc $D\subset M$ such that $D \cap S = \partial D$ and $\partial D$ is homotopically non-trivial in $S$ (i.e. does not bound a disc on $S$). Such a disc is called a \textbf{compression disc} for $S$. If $S$ has a compression disc, then $\pi_1(S)\to \pi_1(M)$ is not injective. It follows from classical work of Papakyriakopoulos (namely, Dehn's Lemma and the Loop Theorem) that the converse is also true. See \cite{shalen02-representations} for more details. Detecting compression discs is the topic of \Cref{sec:Detecting compressing discs}.

If the connected surface $S\subset M$ is not homeomorphic to the 2--sphere and has no compression disc, then it is called \textbf{incompressible}. We say that a surface in $M$ is incompressible if each of its connected components is incompressible. Otherwise it is compressible.

The 3--manifold $M$ is \textbf{$\partial$--irreducible} if no boundary component is a 2--sphere and $\partial M$ is incompressible.

The following definition of an essential surface, along with an extensive discussion of its context and ramifications, can be found in Shalen \cite[\S1.5]{shalen02-representations}.

\begin{defn}[(Essential surface)]
A properly embedded surface $S$ in the compact, irreducible, orientable 3--manifold $M$ is \textbf{essential} if it has the following properties:
\begin{enumerate}[itemsep=1mm]
\item $S$ is bicollared, i.e.\thinspace there is a homeomorphism $h$ from $S \times [-1,1]$ onto a neighbourhood of $S$ in $M$ with the property that $h(x,0) = x$ for each $x\in S$ and $h(S \times [-1,1]) \cap \partial M = h(\partial S \times [-1,1]);$
\item the inclusion homomorphism $\pi_1(S_i)\to \pi_1(M)$ is injective for each component $S_i$ of $S$;
\item no component of $S$ is a 2--sphere;
\item no component of $S$ is boundary parallel; and
\item $S$ is non-empty.
\end{enumerate}
\end{defn}

\subsection{Closed essential surfaces from homology}
\label{subsec:homology criterion}

In this paper, we are interested in \emph{closed} essential surfaces in a compact, irreducible, orientable 3--manifold $M$ (given by a triangulation). The universal coefficients theorem and Poincar\'e duality give a natural isomorphism $H_1(M; \Z) \cong H_2(M, \partial M; \Z),$ and it is well-known that every non-trivial class in the latter group is represented by an essential surface in $M$. In particular, if $M$ is closed and $b_1(M)>0,$ then $M$ contains a (necessarily closed) essential surface. Using the intersection pairing and a standard \emph{half-lives-half-dies} argument, one obtains the analogous criterion that if $\partial M\neq \emptyset$ and $2b_1(M)>b_1(\partial M),$ then $M$ contains a closed essential surface. 

Since the calculation of homology with integer coefficients reduces to computing Smith Normal Form of the (generally sparse) boundary matrices, our general approach is to first check if the existence of closed essential surfaces follows from homology. From a computation perspective, this is sensible because the Smith normal form of an integer matrix is computable in polynomial time \cite{Kannan-polynomial-1979}.

\subsection{Normal surface theory}
\label{subset:normal surface theory}

In the case where homology does not certify the existence of a closed essential surface (or is from the outset known not to do so, as is the case for a homology 3--sphere or the complement of a knot or link in a homology 3--sphere), we use Haken's approach to  
connect topology to linear programming via \emph{normal surface theory} in order to search for essential surfaces. 
A \textbf{normal surface} in a (possibly ideal) triangulation
$\tri$ is a properly embedded surface which intersects each
tetrahedron of $\tri$ in a pairwise disjoint collection of
\textbf{triangles} and \textbf{quadrilaterals}, as shown in \Cref{normaldiscs}. These triangles and quadrilaterals are called
\textbf{normal discs}. In an ideal triangulation of a non-compact
3--manifold, a normal surface may contain infinitely many triangles;
such a surface is called \textbf{spun-normal} \cite{tillmann08-finite}.
A normal surface may be disconnected or empty.

\begin{figure}[h!]  
\centering
\includegraphics[scale=1]{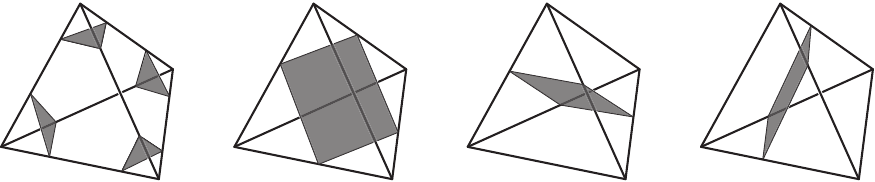}
\caption{The seven types of normal disc in a tetrahedron.} 
\label{normaldiscs}
\end{figure}

We now describe an algebraic approach to normal surfaces.
The key observation is that each normal surface contains
finitely many quadrilateral discs, and is uniquely determined
(up to normal isotopy) by these quadrilateral discs. Here a \textbf{normal
isotopy} of $M$ is an isotopy that keeps all simplices of all dimensions
fixed. Let $\square$ denote the set of all normal isotopy classes
of normal quadrilateral discs in $\tri$,
so that $|\square| = 3t$ where $t$ is the number of tetrahedra in $\tri$.
These normal isotopy classes are called quadrilateral \textbf{types}.

We identify $\R^\square$ with $\R^{3t}.$
Given a normal surface $S,$ let $x(S) \in \R^\square = \R^{3t}$
denote the integer vector for which each coordinate $x(S)(q)$
counts the number of quadrilateral discs in $S$ of type $q \in \square$.
This \textbf{normal $Q$--coordinate} $x(S)$ satisfies the
following two algebraic conditions.

First, $x(S)$ is admissible.
A vector $x \in \R^\square$ is \textbf{admissible} if
$x \ge 0$, and for each tetrahedron $x$ is non-zero
on at most one of its three quadrilateral types. 
This reflects the fact that an embedded surface cannot contain
two different types of quadrilateral in the same tetrahedron.

Second, $x(S)$ satisfies a linear equation for each interior
edge in $M,$ termed a \textbf{$Q$--matching equation}.
Intuitively, these equations arise from the fact that as one
circumnavigates the earth, one crosses the equator from north to south
as often as one crosses it from south to north.
We now give the precise form of these equations.
To simplify the discussion,
we assume that $M$ is oriented and all tetrahedra are given
the induced orientation; see \cite[Section~2.9]{tillmann08-finite} for
details.

\begin{figure}[h]
    \centering
    \subfigure[The abstract neighbourhood $B(e)$]{%
        \label{fig:matchingquadbdry}%
        \includegraphics[scale=1.1]{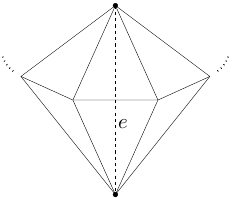}}
    \qquad
    \subfigure[Positive slope $+1$]{%
        \label{fig:matchingquadpos}%
        \includegraphics[scale=1.1]{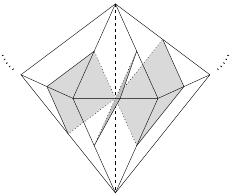}}
    \qquad
    \subfigure[Negative slope $-1$]{%
        \label{fig:matchingquadneg}%
        \includegraphics[scale=1.1]{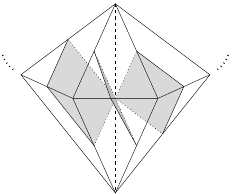}}
    \caption{Slopes of quadrilaterals}
    \label{fig:slopes}
\end{figure}

Consider the collection $\mathcal{C}$ of all (ideal) tetrahedra meeting
at an edge $e$ in $M$ (including $k$ copies of tetrahedron $\sigma$ if $e$ occurs
$k$ times as an edge in $\sigma$).  We form the \textbf{abstract
neighbourhood $B(e)$} of $e$ by pairwise identifying faces of tetrahedra
in $\mathcal{C}$ such that there is a well defined quotient map from
$B(e)$ to the neighbourhood of $e$ in $M$; see
\Cref{fig:matchingquadbdry} for an illustration.  Then  $B(e)$ is a ball
(possibly with finitely many points missing on its boundary).  We think
of the (ideal) endpoints of $e$ as the poles of its boundary sphere, and
the remaining points as positioned on the equator.

Let $\sigma$ be a
tetrahedron in $\mathcal{C}$. The boundary square of a normal
quadrilateral of type $q$ in $\sigma$ meets the equator of $\partial
B(e)$ if and only it has a vertex on $e$. In this case, it has a slope
$\pm1$ of a well--defined sign on $\partial B(e)$ which is independent of the
orientation of $e$. Refer to \Cref{fig:matchingquadpos,fig:matchingquadneg}, which show quadrilaterals with
\textbf{positive} and \textbf{negative slopes} respectively.

Given a quadrilateral type $q$ and an edge $e,$ there is a
\textbf{total weight} $\wt_e(q)$ of $q$ at $e,$ which records the sum of
all slopes of $q$ at $e$ (we sum because $q$ might meet $e$ more than
once, if $e$ appears as multiple edges of the same tetrahedron).
If $q$ has no corner on $e,$ then we set
$\wt_e(q)=0.$ Given edge $e$ in $M,$ the $Q$--matching equation of $e$
is then defined by $0 = \sum_{q\in \square}\; \wt_e(q)\;x(q)$.

\begin{theorem}\label{thm:admissible integer solution gives normal}
For each $x\in \R^\square$ with the properties that $x$ has integral coordinates, $x$ is admissible and
$x$ satisfies the $Q$--matching equations, there is a (possibly
non-compact) normal surface $S$ such that $x = x(S).$ Moreover, $S$ is
unique up to normal isotopy and adding or removing vertex linking surfaces,
i.e., normal surfaces consisting entirely of normal triangles.
\end{theorem}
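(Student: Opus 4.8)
The plan is to reconstruct the surface $S$ from the combinatorial data $x$ in two stages: first fix the quadrilateral discs dictated by $x$, then show that the edge-matching conditions allow the complementary regions to be filled in uniquely by normal triangles, so that $S$ is determined up to adding or removing vertex-linking surfaces. Concretely, inside each tetrahedron $\sigma$ we place $x(q)$ parallel copies of the quadrilateral of the (unique, by admissibility) quadrilateral type $q$ active in $\sigma$; these discs cut $\sigma$ into a stack of regions. In each region we must decide how many normal triangles of each of the four corner types to insert. Recording these unknowns as non-negative integers, one for each corner of each tetrahedron, the requirement that the triangle-and-quadrilateral arcs on each face of $\sigma$ match up with those on the face it is glued to imposes a system of linear equations on the face. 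The heart of the argument is to show this gluing system is consistent precisely because $x$ satisfies the $Q$--matching equations, and that its solution set is an affine lattice whose direction space is spanned by the vertex-linking surfaces.

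First I would set up the \emph{face-arc bookkeeping}. On a single triangular face $f$ of a tetrahedron, the normal discs meeting $f$ cut it into a pattern of arcs: each quadrilateral contributes one arc separating two of the vertices of $f$ from the third, and each triangle at a vertex $v$ of $f$ contributes one short arc cutting off $v$. Thus the arcs on $f$, read off in order as one moves away from each vertex, are totally ordered, and the data is captured by the triple of ``corner multiplicities'' of the triangles plus the single quadrilateral count (which is fixed by $x$). When $f$ is identified with a face $f'$ of another tetrahedron via $\Phi$, matching the arc patterns is exactly matching these corner multiplicities vertex-by-vertex. So across the whole triangulation the triangle counts are governed by one linear equation per (face, vertex-of-face) incidence, identifying the triangle count at a given corner of one tetrahedron with the triangle count at the corresponding corner of the adjacent tetrahedron — possibly with a shift determined by how many quadrilaterals separate that corner from the shared edge.

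Next I would \emph{propagate around edges}. Fix an edge $e$ of $M$ and walk around the abstract neighbourhood $B(e)$ through the cyclic sequence of tetrahedron-corners incident to $e$. Each gluing transition adds the number of triangles needed to absorb the quadrilaterals with a corner on $e$, and the sign of that contribution is exactly the slope $\pm 1$ appearing in $\wt_e(q)$. Going once around $B(e)$ and returning to the start, the net shift must be zero for the triangle counts to be globally consistent; that net shift is $\sum_{q} \wt_e(q)\,x(q)$, which vanishes by hypothesis. Hence for each edge the cyclic system closes up, and one can solve for all triangle multiplicities once a single ``base'' value is chosen for each vertex link of $M$ — the base values corresponding precisely to the freedom of adding copies of the vertex-linking surfaces. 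I would then invoke Theorem~\ref{thm:admissible integer solution gives normal}'s standing orientation hypothesis to make the slope signs coherent, and note that the resulting disc system, having matching arcs on every identified pair of faces, glues to an embedded normal surface $S$ with $x(S) = x$; embeddedness within each tetrahedron is automatic from admissibility and the parallel-stacking construction.

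The main obstacle — and the step deserving the most care — is the consistency argument around edges: one must verify that the slope contributions are added with exactly the signs that make the accumulated shift equal the left-hand side of the $Q$--matching equation, including the bookkeeping for an edge that occurs several times in the same tetrahedron (where the total weight $\wt_e(q)$ already sums the several slopes). This is where the orientation convention from \cite[Section~2.9]{tillmann08-finite} is essential and where a naive sign could silently break the proof. A secondary subtlety is establishing that the affine solution space for the triangle multiplicities has direction space spanned \emph{exactly} by the vertex links — neither more (the per-edge equations are rigid once a vertex-link base is fixed) nor less (each vertex link genuinely gives an independent solution of the homogeneous system) — which then yields the stated uniqueness up to normal isotopy and addition or removal of vertex-linking surfaces.
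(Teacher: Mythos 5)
Your construction follows the standard template that the paper itself defers to (it gives no proof, citing instead \cite[Theorem~2.1]{kang05-spun} and \cite[Theorem~2.4]{tillmann08-finite}): place the quadrilaterals dictated by $x$, treat the triangle multiplicities as unknowns, match arc patterns across face gluings, and observe that the net shift accumulated around an edge is the left-hand side of the $Q$--matching equation. For a material triangulation, where every vertex link is a sphere, this is essentially the whole argument, and your remarks about non-negativity (absorbed by adding vertex-linking copies) and about the homogeneous solutions being exactly the vertex links are correct there.

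The genuine gap is in the ideal case, which is the case the theorem is really about (``possibly non-compact'' $S$). You model the triangle count at each corner of each tetrahedron as a \emph{finite} non-negative integer and claim that closure of the shift around every edge lets you ``solve for all triangle multiplicities once a single base value is chosen for each vertex link.'' Closure around each edge only says that the shift data defines a \emph{cocycle} on each vertex-linking surface $B_v$ (an edge of $M$ incident to the ideal vertex $v$ corresponds to a vertex of the induced triangulation of $B_v$, and the $Q$--matching equation kills the local obstruction there). A globally consistent finite assignment exists if and only if this cocycle is a \emph{coboundary}, i.e.\ if and only if its class in $H^1(B_v;\R)$ vanishes --- and that class is precisely the boundary map $\nu_{v,x}$ of \S\ref{subsec:Boundary map}. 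When $B_v$ is a sphere the class vanishes automatically, but when $B_v$ has positive genus it need not, and then \emph{no} finite triangle assignment exists: the surface realising $x$ is spun-normal, with infinitely many triangles spiralling into the ideal vertex. Your finite linear system cannot produce these surfaces, so as written your argument proves existence only when $\nu_x=0$ (e.g.\ for spherical vertex links), not the full statement of Theorem~\ref{thm:admissible integer solution gives normal}. The cited proofs avoid this by inserting (half-open, infinite) stacks of triangles at each corner and matching arcs up to a shift, so that the $Q$--matching equations alone give existence, while the class $\nu_{v,x}$ governs only whether the resulting surface is compact; uniqueness up to normal isotopy and vertex-linking components then also has to be argued in that infinite setting, where a vertex-linking surface at a spun vertex is absorbed into the end rather than contributing a free integer parameter as in your finite bookkeeping.
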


This is related to Hauptsatz 2 of \cite{haken61-knot}. For a proof of \Cref{thm:admissible integer solution gives normal}, see \cite[Theorem~2.4]{tillmann08-finite}. For restricted classes of manifolds and triangulations, earlier proofs are in \cite[Theorem 1]{tollefson98-quadspace} and \cite[Theorem~2.1]{kang05-spun}.
The set of all $x\in \R^{\square}$ with the property that
(i)~$x\ge 0$ and (ii)~$x$ satisfies the $Q$--matching equations is denoted
$Q(\tri).$ This
naturally is a polyhedral cone, but
the set of all admissible $x\in \R^{\square}$
typically meets $Q(\tri)$ in a non-convex set.

The polyhedral cone $Q(\tri)$ is defined by a finite number of linear inequalities and linear equalities with integer coefficients and has the origin as its cone point. Since $x\ge 0$ for each $x\in Q(\tri),$ the intersection of $Q(\tri)$ with the standard simplex $\{ x\ge 0 \;\mid\; \sum x_i = 1\}$ is a compact polyhedron. The vertices of this polyhedron define the \textbf{extremal rays} of $Q(\tri).$ Each extremal ray contains a point with integer coordinates, and hence the normal $Q$--coordinate of a normal surface. We call a two-sided, connected normal surface $S$ with the property that $x(S)$ lies on an extremal ray of the polyhedral cone $Q(\tri)$ a \textbf{$Q$--vertex surface}.

Tollefson~\cite{tollefson98-quadspace} proved the following theorem building on the work of Jaco and Oertel~\cite{jaco84-haken}.

\begin{theorem}[Tollefson]\label{thm:Tollefson}
Let $M$ be a simplicially triangulated, compact, irreducible, $\partial$--irreducible 3-manifold. If there exists a two-sided, incompressible, $\partial$--incompressible surface in $M$, then there exists one that is a $Q$--vertex surface.
\end{theorem}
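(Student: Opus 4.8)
\emph{Proof proposal.} The plan is to transpose the Jaco--Oertel strategy \cite{jaco84-haken} into quadrilateral coordinates and to argue by induction on quadrilateral weight. Let $S$ be the given two-sided, incompressible, $\bdry$--incompressible surface. The first step is to put $S$ into normal form: by Haken's normalisation procedure, carried out in the irreducible and $\bdry$--irreducible manifold $M$, one may isotope $S$ to a normal surface without ever creating a compressing or $\bdry$--compressing disc, and two-sidedness is preserved; after discarding the vertex-linking components what remains is still non-empty, two-sided, incompressible and $\bdry$--incompressible, and it now has a well-defined class $x(S)\in Q(\tri)$ by Theorem~\ref{thm:admissible integer solution gives normal}. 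The induction will be on the quadrilateral weight $\|x(S)\|_1=\sum_{q\in\square}x(S)(q)$.

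If $x(S)$ spans an extremal ray of $Q(\tri)$, then $S$ (up to vertex links) is a $Q$--vertex surface and we are done; moreover it may be taken connected, since the class of any component still lies on that extremal ray (an extremal ray is a face of the cone) and a component of an incompressible, $\bdry$--incompressible surface has the same property. Otherwise $x(S)=x_1+x_2$ with $x_1,x_2\in Q(\tri)$ nonzero and neither parallel to $x(S)$. Because $x(S)$ is admissible, each $x_i$ is automatically admissible---in every tetrahedron it is supported on the single quadrilateral type already used by $x(S)$---so by Theorem~\ref{thm:admissible integer solution gives normal} there are embedded normal surfaces $F_1,F_2$ with $x(F_i)=x_i$ and $\|x_i\|_1<\|x(S)\|_1$. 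Using the discussion of reduced form in \S\ref{subsec:Reduced form} I would realise this algebraic splitting as a geometric Haken sum $S=F_1+F_2$ in reduced form, with double curve $\Gamma=F_1\cap F_2$ a disjoint union of circles and proper arcs; here $S$ is recovered from $F_1\cup F_2$ by the \emph{regular} switch along $\Gamma$.

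The heart of the argument is the exchange lemma: one of $F_1,F_2$, after removing vertex-linking components, is again incompressible and $\bdry$--incompressible. Suppose not, and say $F_1$ carries a compressing or $\bdry$--compressing disc $D$. Put $D$ transverse to $F_2$ and run the standard innermost-circle/outermost-arc induction on $D\cap F_2$: an innermost disc or outermost half-disc of $D$ cut along $F_2$ is either a compressing or $\bdry$--compressing disc for $F_2$, or it can be removed by a cut-and-paste exploiting the incompressibility and $\bdry$--incompressibility of $S$ together with the irreducibility and $\bdry$--irreducibility of $M$; iterating, one either reduces $|D\cap F_2|$ or converts $D$ into an essential disc disjoint from $F_2$. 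An essential disc disjoint from $F_2$ persists under the regular switch along $\Gamma$, hence becomes a compressing or $\bdry$--compressing disc for $S=F_1+F_2$, contradicting the hypothesis on $S$; the companion surface obtained from the \emph{irregular} switch along $\Gamma$ (same weight, different topology) is the device that lets one trade a disc on $F_1$ meeting $F_2$ for one that does not. Granting the exchange lemma, pick a summand, say $F_1$, that is incompressible and $\bdry$--incompressible; if $F_1$ is two-sided the induction hypothesis (its weight is strictly smaller) yields a two-sided incompressible, $\bdry$--incompressible $Q$--vertex surface, while if $F_1$ is one-sided the induction produces such a surface $F$ that is one-sided, and then $\bdry N(F)$ is a two-sided incompressible, $\bdry$--incompressible surface (using irreducibility of $M$, which forbids $\mathbb{RP}^2$) with $Q$--class $2\,x(F)$ lying on the same extremal ray, hence again a $Q$--vertex surface. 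The closed case is the special case $\bdry M=\emptyset$, where the $\bdry$--compression bookkeeping disappears.

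I expect the main obstacle to be precisely that $\bdry$--compression bookkeeping inside the exchange lemma: tracking exactly how compressing and $\bdry$--compressing discs behave under the regular and irregular switches along $\Gamma$, doing so uniformly for the two kinds of disc, and invoking reduced form (so that $\Gamma$ has no removable circles or arcs) and $\bdry$--irreducibility of $M$ (so that outermost half-discs can be absorbed) at the right points. This is the technical core of the original Jaco--Oertel and Tollefson arguments, and the same care is required here.
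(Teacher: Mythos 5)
Your overall strategy---decompose the solution and show that a summand stays incompressible---is the right family of ideas, but as written it has a gap that the Jaco--Oertel/Tollefson argument, and the paper's own proof of the analogous Theorems~\ref{thm:Tollefson-singular} and \ref{thm:some extremal is closed essential} (for Theorem~\ref{thm:Tollefson} itself the paper simply cites \cite{tollefson98-quadspace}), is specifically built to avoid. The ``exchange lemma'' you invoke is not available for an arbitrary normal representative of $S$: the innermost-circle/outermost-arc trading on $D\cap F_2$ yields one of three outcomes --- a compressing or $\partial$-compressing disc for $S$, a reduction in the number of double curves (contradicting reduced form), or an isotopy of $S$ across a ball/product region that strictly decreases its weight $\wt(S)=|S\cap M^{(1)}|$. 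The third outcome is no contradiction unless $S$ has first been replaced by a surface of \emph{least weight} in its isotopy class, and you never impose this. That least-weight normalisation is the opening move of the paper's proof of the variant theorems, and it is exactly what allows the adaptation of \cite[Theorem~2.2]{jaco84-haken} (in a reduced sum of a least-weight two-sided incompressible surface, both summands are incompressible) to go through; without it your exchange lemma is false in general, since a compressing disc for a summand may only certify that $S$ was not least weight.

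The induction itself also stalls. If $x(S)$ is not on an extremal ray of $Q(\tri)$, one can only conclude that some multiple decomposes, $n\,x(S)=\sum n_i x(V_i)$ with $V_i$ vertex solutions; an integral splitting $x(S)=x_1+x_2$ into two nonzero integral points of $Q(\tri)$ need not exist (fundamental-but-not-extremal solutions are precisely the integer points for which it fails), so the promised strict decrease of $\|x\|_1$ is not available and the induction has no step at such surfaces. This is why the paper works with $n$ (or $2n$) parallel copies $nS$ --- which remain least weight exactly because $S$ is two-sided --- writes $nS+\Sigma=V+W$ with $V$ a vertex solution, and argues \emph{directly} that $V$ is incompressible, rather than inducting on weight. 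Finally, your one-sided case is not sound: the boundary of a regular neighbourhood of a one-sided incompressible surface need not be incompressible (the double can compress on the side of the twisted $I$-bundle's complement), and irreducibility of $M$ does not repair this; the paper instead runs the same least-weight sum argument on $2nS$ with $x(V)=2x(V_i)$, so two-sidedness and incompressibility of the vertex surface come out of the argument itself. Adding the least-weight normalisation, the parallel-copy decomposition, and this treatment of one-sided vertex solutions would essentially turn your sketch into the Jaco--Oertel/Tollefson proof the paper adapts.
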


We remark that Tollefson (and the authors he cites) work with simplicial triangulations or handlebody decompositions. 
The above theorem leaves open the possibility that the $Q$--vertex surface is boundary parallel. \Cref{thm-mixed:some extremal is closed essential} below provides a more general result for a different class of triangulations of orientable 3--manifolds, which includes the statement of \Cref{thm:Tollefson} in the case $M$ is closed and orientable (see \Cref{thm:Tollefson-singular}).

Every integer vector in $Q(\tri)$ is a linear combination of the normal $Q$--coordinates of the $Q$--vertex surfaces over the non-negative \emph{rational} numbers. This is a technical detail that needs to be addressed in the proof. In some applications of normal surface theory, one requires arguments that only allow non-negative \emph{integer} linear combinations. The polyhedral cone $Q(\tri)$ has a so-called \emph{Hilbert basis} consisting of a finite number of \textbf{fundamental solutions} that have the property that each integer lattice point in $Q(\tri)$ is a linear combination of the fundamental solutions over the non-negative integers. Each fundamental solution defines a \textbf{$Q$--fundamental surface}. We note that a $Q$--vertex surface is either a $Q$--fundamental surface or the boundary of a regular neighbourhood of a 1--sided $Q$--fundamental surface.


\subsection{Haken sum and reduced form}
\label{subsec:Reduced form}

Let $M$ be a (ideally or materially) triangulated 3--manifold.
The \textbf{weight} of the normal surface $F$ is the cardinality of its intersection with the 1--skeleton,  $\wt(F) = |F \cap M^{(1)}|.$ If $F$ is closed, then its weight is finite.

\begin{figure}[h]
\begin{center}
  \subfigure[]{
      \includegraphics[width=9cm]{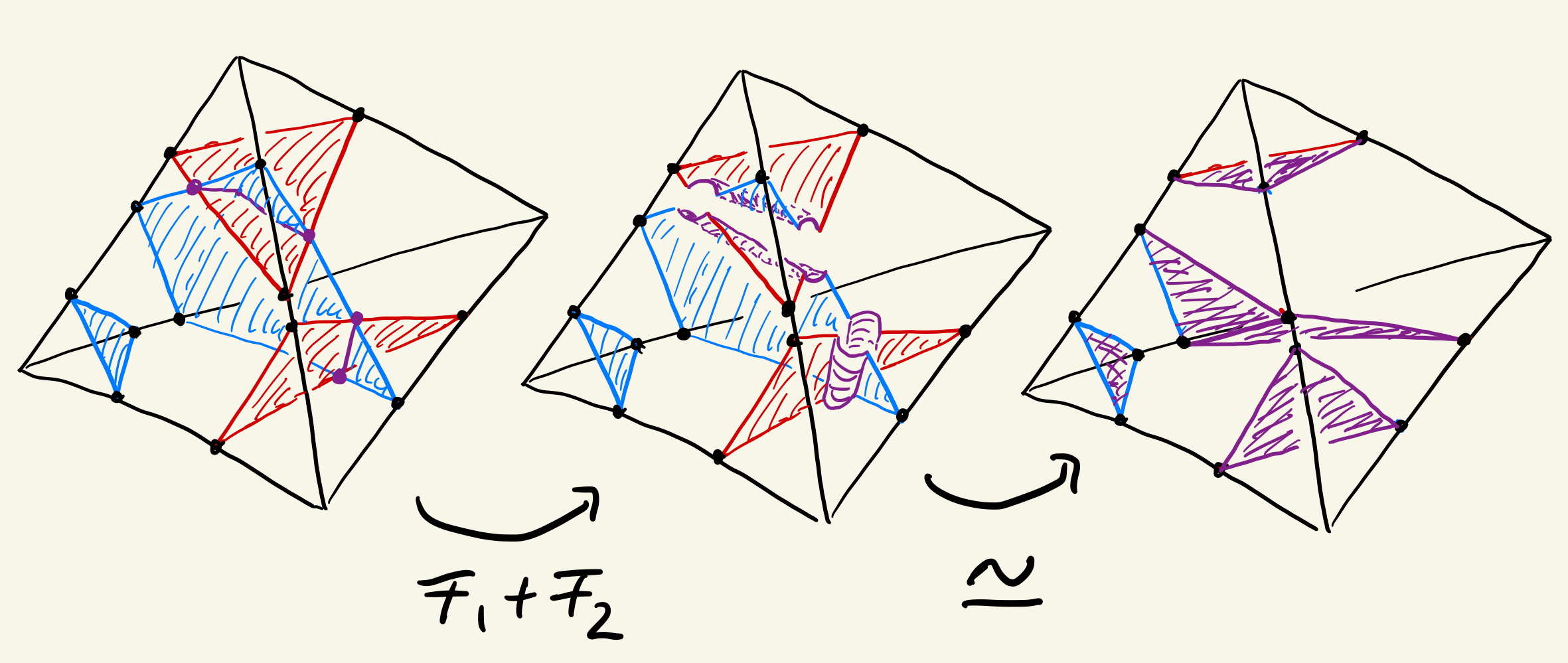}
    } 
\subfigure[]{
      \includegraphics[width=9cm]{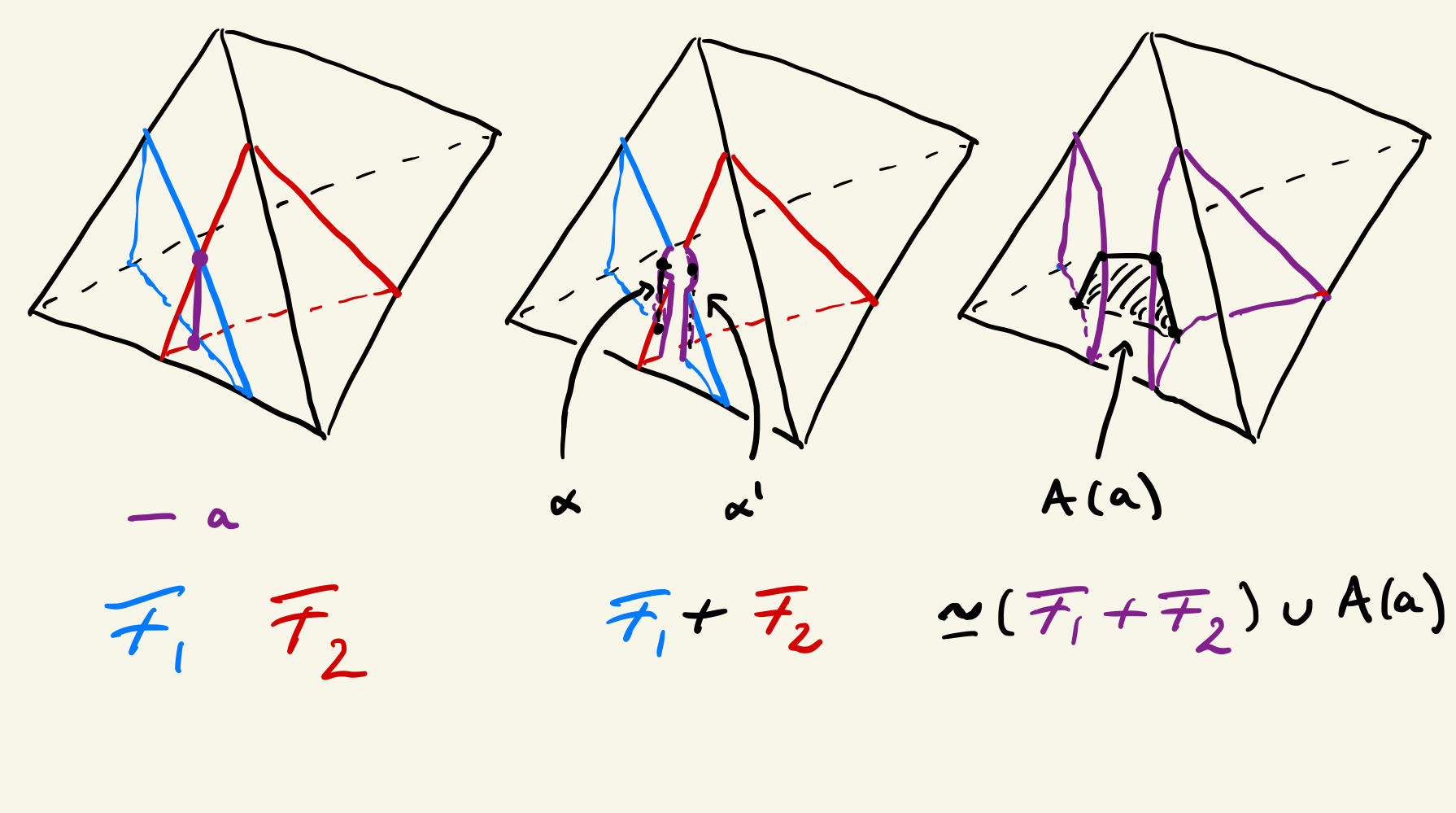}
    } 
\end{center}
    \caption{Regular exchange of normal discs}
     \label{fig:regular exchange of discs}
\end{figure}

Two normal surfaces are \textbf{compatible} if they do not meet a tetrahedron in quadrilateral discs of different types. The normal surfaces $F_1$ and $F_2$ are in \textbf{general position} if:
\begin{enumerate}
\item For each 1--simplex $\Delta^1 \in \tri^{(1)}$, $F_1 \cap F_2 \cap \Delta^{1} = \emptyset.$
\item For each 2--simplex $\Delta^2 \in \tri^{(2)}$ and all normal arcs $f_i \subset F_i \cap \Delta^2,$ we have $f_1 \cap f_2$ is empty or a single point.
\item For each 3--simplex $\Delta^3 \in \tri^{(3)}$ and all normal discs $D_i \subset F_i \cap \Delta^3,$ we have $D_1 \cap D_2$ is empty or a single arc unless $D_1$ and $D_2$ are quadrilateral discs of the same type.
\end{enumerate}
It is not difficult to show that given two normal surfaces, there is a normal isotopy that puts them into general position.

Suppose $F_1$ and $F_2$ are closed normal surfaces that are compatible and in general position.
In this case, one can define a new normal surface, denoted $F_1+F_2$ and called the \textbf{Haken sum of $F_1$ and $F_2$}, as follows.

Denote $N(F_1 \cap F_2)$ a small, open, tubular neighbourhood of $F_1 \cap F_2$ with the property that the neighbourhood $N(a)\subset N(F_1 \cap F_2)$ of each connected component $a$ of $F_1 \cap F_2$ meets each $F_i$ in an annulus or M\"obius band. Also note that $a$ is orientation preserving in $M$ and therefore $N(a)$ is a solid torus. Now $N(a)$ meets $F_1$ in an annulus if and only if it meets $F_2$ in an annulus if and only if $a$ is 2--sided in $F_1$ if and only if $a$ is 2--sided in $F_2.$ Similarly, $N(a)$ meets $F_1$ in a M\"obius band if and only if it meets $F_2$ in a M\"obius band if and only if $a$ is 1--sided in $F_1$ if and only if $a$ is 1--sided in $F_2.$

The connected components of the closure of $(F_1 \cup F_2) \setminus N(F_1 \cap F_2)$ are called \textbf{patches} of $F_1+F_2.$ 

The patches can be connected by annuli on $\partial N(F_1 \cap F_2)$ to form a new surface $F_1+F_2.$ At each intersection curve, there are two choices for such annuli. We make the unique choice determined by the intersection of the corresponding normal discs that ensures that the resulting surface is normal without any further isotopy (see \cite[\S 1.8]{tillmann08-finite} for a proof and \Cref{fig:regular exchange of discs} for some possible configurations). These annuli are called \textbf{regular exchange annuli}, and their complementary annuli are the \textbf{irregular exchange annuli}. On $\partial N(a)$, there is either one of each type of annulus or two of each type, depending on whether $a$ is 1--sided or 2--sided.

Each regular exchange annulus has a product structure of the form (boundary curve of patch)$\times [0,1].$ We call the curves (boundary curve of patch)$\times \frac{1}{2}$ the \textbf{trace curves} on $F_1+F_2$ and the components of the complement of the set of all trace curves on $F_1+F_2$ the \textbf{regions}. Note that each region is a small regular neighbourhood of a unique patch in $F_1+F_2.$ 
A component $a$ of $F_1 \cap F_2$ contributes two trace curves if it is two-sided on both $F_1$ and $F_2$ and it contributes one trace curve if it is one-sided on both $F_1$ and $F_2.$ There is a \textbf{0--weight band} $A(a)$ (which is either an annulus or a M\"obius band) properly embedded in $N(a)$ with boundary the trace curve(s) and core curve the intersection curve $a$. Note that such a 0--weight band only meets the normal surface $F_1+F_2$ in its boundary.

If $a$ is 2--sided, then patches $P_1$ on $F_1$ and $P_2$ on $F_2$ are \textbf{adjacent at $a$} if their boundaries are joined by an irregular exchange annulus associated with $a.$ Moreover, if $C_i \subset F_1+F_2$ are subsurfaces with $P_i \subset C_i,$ the $C_1$ and $C_2$ are \textbf{adjacent at $a$}.

If $F_1$ and $F_2$ do not contain vertex linking surfaces, then they are uniquely determined, up to normal isotopy, by their normal $Q$--coordinates $x(F_1)$ and $x(F_2).$  Then $x(F_1)+x(F_2)$ is an admissible solution to the $Q$--matching equations, and hence represented by a unique closed normal surface $F$ without vertex linking components. Note that the Haken sum $F_1+F_2$ has the same normal $Q$--coordinate as $F$, that is $x(F) = x(F_1+F_2).$ It follows that $F$ is related to $F_1+F_2$ by normal isotopy and deleting any components of $F_1+F_2$ that are vertex linking surfaces.

We may therefore normally isotope $F$ such that we have the identity $F + \Sigma = F_1 + F_2,$ where $\Sigma$ is a (possibly empty) finite union of vertex linking surfaces. The fundamental surfaces are precisely those that cannot be written as a non-trivial Haken sum in the following sense. Suppose that $F$ is a $Q$--fundamental surface and $F + \Sigma = F_1 + F_2.$ Then $x(F) = x(F_1)+x(F_2)$ and hence $F$ being fundamental implies either $x(F_1) = x(F)$ or $x(F_2) = x(F).$ In the first case, $F_1$ contains a component normally isotopic to $F$ and the remaining components of $F_1$ and all components of $F_2$ are vertex linking surfaces. Similarly in the second case.

Both weight and Euler characteristic are additive under Haken sum. So we have
\begin{align*}
\wt(F_1) + \wt(F_2) &=\wt(F) + \wt(\Sigma)\\
\chi(F_1) + \chi(F_2) &=\chi(F) + \chi(\Sigma)
\end{align*}
The sum 
\[
F+\Sigma=F_1 + F_2
\]
 is said to be in \textbf{reduced form} if there is no Haken sum 
 \[F+\Sigma'=F'_1 + F'_2\]
  where $F'_i$ is isotopic to $F_i$ in $M,$ $F'_1 \cap F'_2$ has fewer components than $F_1 \cap F_2$ and $\Sigma'$ is a union of vertex linking surfaces. 
For instance, if $M$ is irreducible and $\partial$--irreducible and $F+\Sigma=F_1 + F_2$ is in reduced form, then there are no two adjacent patches that are both discs.

\subsection{Crushing triangulations}
\label{subsec:Crushing}

The crushing process of Jaco and Rubinstein
\cite{jaco03-0-efficiency} plays an important role in our
algorithms, and we informally outline this process here.
We refer the reader to
\cite{jaco03-0-efficiency} for the formal details,
or to \cite{burton12-crushing} for a simplified approach.

\begin{figure}[ht]
    \centering
    \subfigure[Pieces after cutting open along $S$]{%
        \label{fig:jrslices}%
        \includegraphics[scale=0.6]{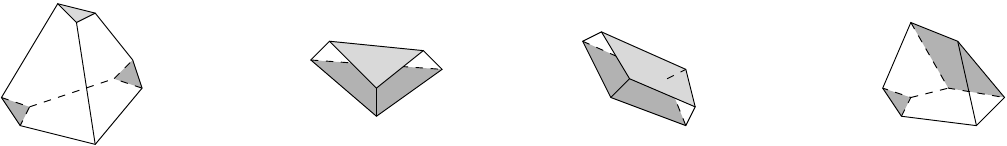}}
    \\
    \subfigure[Pieces after crushing $S$ to a point]{%
        \label{fig:jrpieces}%
        \includegraphics[scale=0.6]{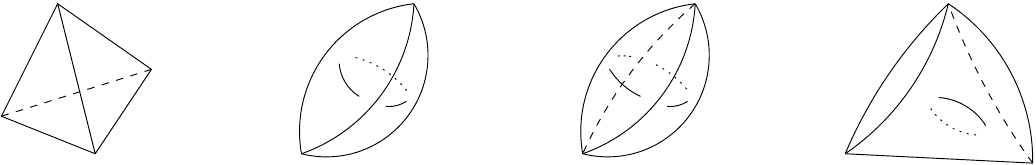}}
    \\
    \subfigure[Flattening footballs and pillows]{%
        \label{fig:jrflatten}%
        \includegraphics[scale=0.9]{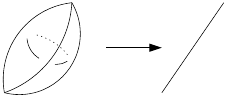}\qquad
        \includegraphics[scale=0.9]{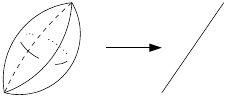}\qquad
        \includegraphics[scale=0.9]{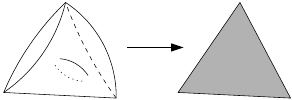}}
    \caption{Steps in the Jaco-Rubinstein crushing process}
    \label{fig-jrcrush}
\end{figure}

Let $S$ be a two-sided normal surface in a triangulation $\tri$ of a compact
orientable 3-manifold $M$ (with or without boundary).
To \textbf{crush} $S$ in $\tri$, we
(i)~cut $\tri$ open along $S$, which splits each tetrahedron
into a number of (typically non-tetrahedral) pieces,
several of which are illustrated in \Cref{fig:jrslices};
(ii)~crush each resulting copy of $S$ on the boundary to a point,
which converts these pieces into tetrahedra, footballs and/or pillows
as shown in \Cref{fig:jrpieces}; and
(iii)~flatten each football or pillow to an edge or triangle
respectively, as shown in \Cref{fig:jrflatten}.

The result is a new collection of tetrahedra with a new set of face
identifications.  We emphasise that we \emph{only} keep track of face
identifications between tetrahedra:
any ``pinched'' edges or vertices fall apart, and any
lower-dimensional pieces (triangles, edges or vertices) that do not
belong to any tetrahedra simply disappear.
The resulting structure might not represent a 3-manifold
triangulation, and even if it does the flattening operations might have
changed the underlying 3-manifold in ways that we did not intend.

Although crushing can cause a myriad of problems in general,
Jaco and Rubinstein show that in some cases the operation behaves
extremely well \cite{jaco03-0-efficiency}.  In particular, if
$S$ is a normal sphere or disc, then after crushing we always obtain a
triangulation of some 3-manifold $M'$ (possibly disconnected, and
possibly empty) that is obtained from the original
$M$ by zero or more of the following operations:
\begin{itemize}
\item cutting manifolds open along spheres and filling the resulting boundary spheres with
3-balls;
\item cutting manifolds open along properly embedded discs;
\item capping boundary spheres of manifolds with 3-balls;
\item deleting entire connected components that are any of the 3-ball, the
3-sphere,
projective space $\R P^3,$ the lens space $L(3,1)$ or the product space $S^2 \times S^1.$
\end{itemize}

An important observation is that the number of tetrahedra that remain
after crushing
is precisely the number of tetrahedra that do not contain quadrilaterals of $S$.

\section{Closed normal surfaces in $Q$--space}
\label{sec:closed}

In this section we review the linear \emph{boundary map} of \cite{tillmann08-finite},
with which we restrict the normal surface solution space to closed surfaces only, and we provide the required extensions of Jaco and Oertel's result in the context of singular triangulations and ideal triangulations. Throughout this section, we assume that $M$ is the interior of a compact, orientable manifold with (possibly empty) boundary.

\subsection{Boundary map}
\label{subsec:Boundary map}

Suppose $\tri$ is a mixed triangulation of $M.$ 
The link of a (material or ideal) vertex $v$ is an orientable surface $B_v$ of genus $g_v\ge 0$, and we may assume that $B_v$ is a normal surface entirely made up of normal triangles. Let $\gamma \in H_1(B_v; \R).$ We now describe an associated linear functional $\nu(\gamma) \co \R^{\square}\to \R,$ which measures the behaviour along $\gamma$ of a normal surface near the ideal vertex $v$. The idea is similar to the intuitive description of the $Q$--matching equations. As one goes along $\gamma$ and looks down into the manifold, normal quadrilaterals will (as Jeff Weeks puts it) \emph{come up from below} or \emph{drop down out of sight}. If the total number coming up minus the total number dropping down is non-zero, then the surface spirals towards the vertex in the cross section $\gamma \times [0, \infty) \subset B_v \times [0, \infty)$ and the sign indicates the direction, see Figure~\ref{fig:spinning}(b) for a sketch when $B_v$ is a torus. If this number is zero, then after a suitable isotopy the surface meets the cross section in a (possibly empty) union of circles, see \Cref{fig:spinning}(c).

\begin{figure}[h]
  \begin{center}
    \subfigure[$0=\nu_x(\gamma) = \sum_{i=1}^{k} (-1)^i x(q_i)$ is the $Q$--matching equation]{
      \includegraphics[scale=1]{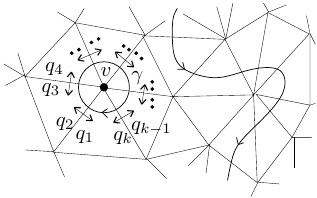}}
    \qquad
    \subfigure[spun $\Longleftrightarrow$ $\nu_x\neq 0$]{
      \includegraphics[scale=0.9]{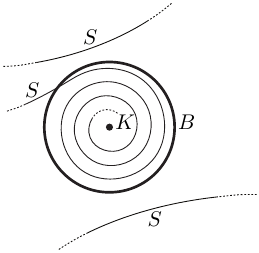}}
    \subfigure[not spun $\Longleftrightarrow$ $\nu_x= 0$]{
      \includegraphics[scale=0.9]{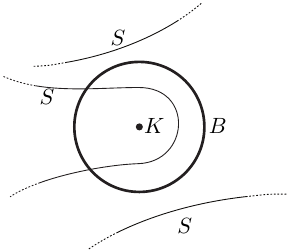}}
  \end{center}
  \caption{Boundary map determines $Q$--matching equations and spinning}
  \label{fig:spinning}
\end{figure}

The surface $B_v$ has an induced triangulation consisting of normal triangles. Represent $\gamma$ by an oriented path on $B_v,$ which is disjoint from the 0--skeleton and meets the 1--skeleton transversely. Each edge of a triangle in $B_v$ is a normal arc. Give the edges of each triangle in $B_v$ transverse orientations pointing into the triangle and labelled by the quadrilateral types sharing the normal arc with the triangle; see \Cref{fig:quad coods}. We then define $\nu(\gamma)$ as follows. Choosing any starting point on $\gamma,$ we read off a formal linear combination of quadrilateral types $q$ by taking $+q$ each time the corresponding edge is crossed with the transverse orientation, and $-q$ each time it is crossed against the transverse orientation (where each edge in $B_v$ is counted twice---using the two adjacent triangles). 

\begin{figure}[h]
    \centering
    \includegraphics[scale=1.1]{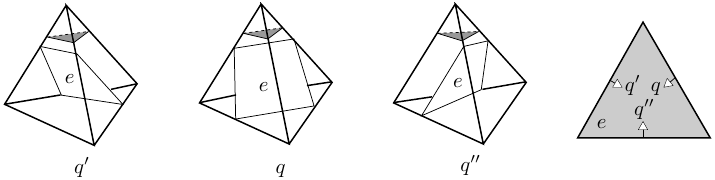}
    \caption{Coming up and dropping down}
    \label{fig:quad coods}
\end{figure}

Evaluating $\nu(\gamma)$ at some $x\in \R^{\square}$ gives a real number $\nu_{x}(\gamma).$ For example, taking a small loop around a vertex in $T$ and setting this equal to zero gives the $Q$--matching equation of the corresponding edge in $M;$ see \Cref{fig:spinning}(a). For each $x\in Q(\tri),$ the resulting map $\nu_{v,x}\co H_1(B_v ; \R) \to \R$ is a well-defined homomorphism, which has the property that the surface in Theorem~\ref{thm:admissible integer solution gives normal} is closed if and only if $\nu_{v,x} = 0$ (see \cite{tillmann08-finite}, Proposition~3.3). Since $\nu_{v,x}\co H_1(B_v; \R) \to \R$ is a homomorphism, it is trivial if and only if we have $\nu_{v,x}(\alpha_i) = 0 = \nu_{v,x}(\beta_i)$ for any basis $\{\alpha_i, \beta_i\}_{1\le i\le g_v}$ of $H_1(B_v; \R).$

We define $\nu_x = \oplus_v\; \nu_{v,x},$ where the sum is taken over all vertices. The surface in \Cref{thm:admissible integer solution gives normal} is closed if and only if $\nu_x =0$ (see \cite{tillmann08-finite}, Proposition 3.3). We then define 
\[
Q_0(\tri) = Q(\tri) \cap \{ x \mid \nu_x=0\}
\]  and call a 2--sided, connected normal, surface $F$ with $x(F)$ on an extremal ray of $Q_0(\tri)$ a \textbf{$Q_0$--vertex surface.}

We remark that if $B_v$ is a sphere, then $\nu_{v,x} = 0$ and hence in the case where each $B_v$ is a sphere, we have $Q_0(\tri) = Q(\tri).$

\begin{lemma}\label{lem:sum_of_verices}
Let $F$ be a closed normal surface. Then there are $Q_0$--vertex surfaces $V_i$ and $n, n_i \in \mathbb{N}\setminus \{0\}$ such that $n x(F) = \sum n_i x(V_i).$ In particular, we can write $nF + \Sigma = \sum n_i V_i,$ where $\Sigma$ is a union of vertex linking surfaces and each intersection curve in the Haken sum is 2--sided.
\end{lemma}

\begin{proof}
We have $$m x(F) = \sum m_i x(V'_i),$$ where $m, m_i \in \mathbb{N}\setminus \{0\}$ and either $V'_i$ or $2V'_i$ is a $Q_0$--vertex surface for each $i.$ The two cases arise from the fact that we require a $Q_0$--vertex surface to be 2--sided and connected: If $V$ corresponds to the first integer lattice point on an admissible extremal ray of $Q_0(\tri)$ and $V$ is 1--sided, then the corresponding $Q_0$--vertex surface is $2V,$ obtained by taking the boundary of a regular neighbourhood of $V.$ If $V'_i$ is a $Q_0$--vertex surface, let $n_i = 2m_i$ and $V_i=V'_i.$
If $2V'_i$ is a $Q_0$--vertex surface, let $n_i = m_i$ and $V_i=2V'_i.$ Finally let $n=2m.$ Then $n x(F) = \sum n_i x(V_i)$ and each $V_i$ is a $Q_0$--vertex surface.
\end{proof}

We have now introduced all notation and definitions required for our generalisation of \cite[Theorem 2.2]{jaco84-haken}:

\begin{reptheorem}{thm-mixed:some extremal is closed essential}
Suppose $M$ is the interior of an irreducible and $\partial$--irreducible, compact, orientable 3--manifold with (possibly empty) boundary, and let $\tri$ be a mixed triangulation of $M.$ If $M$ contains a closed, essential surface $S,$ then there is a normal, closed essential surface $F$ with the property that $x(F)$ lies on an extremal ray of $Q_0(\tri).$ 

Moreover, if $\chi(S)<0$, then there is such $F$ with $\chi(F)<0$. If, in addition, the link of each ideal vertex has zero Euler characteristic, then if $\chi(S)=0$, then there is such $F$ with $\chi(F)=0.$
\end{reptheorem}

The proof is given in the next two sections. It is an adaptation of the proofs of \cite[Lemma 2.1]{jaco84-haken} and \cite[Theorem 2.2]{jaco84-haken}. The adaptation involves both moving from normal surfaces with respect to handle decompositions to normal surfaces with respect to triangulations, and moving from standard coordinates to quadrilateral coordinates. The former creates almost no differences, whilst the latter requires some extra techniques, mainly to address vertex links that are spheres or the possibility of vertex links that are not of minimal weight in their isotopy classes. In particular, we wanted to avoid assuming that ideal edges are essential.

Based on feedback by a referee, we not only describe the modifications  to the proofs in \cite{jaco84-haken} that are required, but rather give a complete proof. The arguments that are repeated almost verbatim from \cite{jaco84-haken} are indented, and we hope they serve as an invitation to read the original. There are two stages: \Cref{subsec:Least weight essential surfaces} contains auxiliary results about Haken sums giving least weight closed essential normal surfaces. The proof of \Cref{thm-mixed:some extremal is closed essential} is then assembled in \Cref{subsec:proof-of-J-O}.


\subsection{Least weight essential normal surfaces}
\label{subsec:Least weight essential surfaces}

Suppose $M$ is the interior of an irreducible and $\partial$--irreducible, compact, orientable 3--manifold with (possibly empty) boundary, and let $\tri$ be a mixed triangulation of $M.$

Suppose $M$ contains the closed, essential, normal surface $F.$ Replace this by a normal surface that has least weight amongst all normal surfaces isotopic (but not necessarily normally isotopic) to $F.$ We let $F$ denote this surface.

We let $nF$ denote the normal surface obtained by taking $n$ parallel copies of $F.$ Clearly, $x(nF) = nx(F),$ and since $F$ has least weight in its isotopy class, so does $nF$ because $F$ is 2--sided. To sum up, $nF$ is a closed, essential, normal surface which has least weight amongst all normal surfaces in its isotopy class.

Suppose $F + \Sigma = F_1 + F_2$ is in reduced form, where each $F_i$ is a closed normal surface no component of which is a vertex linking surface and $\Sigma$ a union of vertex linking surfaces. If some component of $F_1 \cap F_2$ is 1--sided in $F_1$ (and hence $F_2$), replace $F$ by $2F$, $\Sigma$ by $2\Sigma$, and $F_i$ by $2F_i$. The surface $2F$ is also essential and least weight in its isotopy class. Hence, we may assume that each intersection curve in $F + \Sigma = F_1 + F_2$ is 2--sided and that all surfaces in the sum are 2--sided. In particular, each component of $F_1 \cap F_2$ contributes two trace curves.

\begin{figure}[h]
    \centering
    \includegraphics[width=10cm]{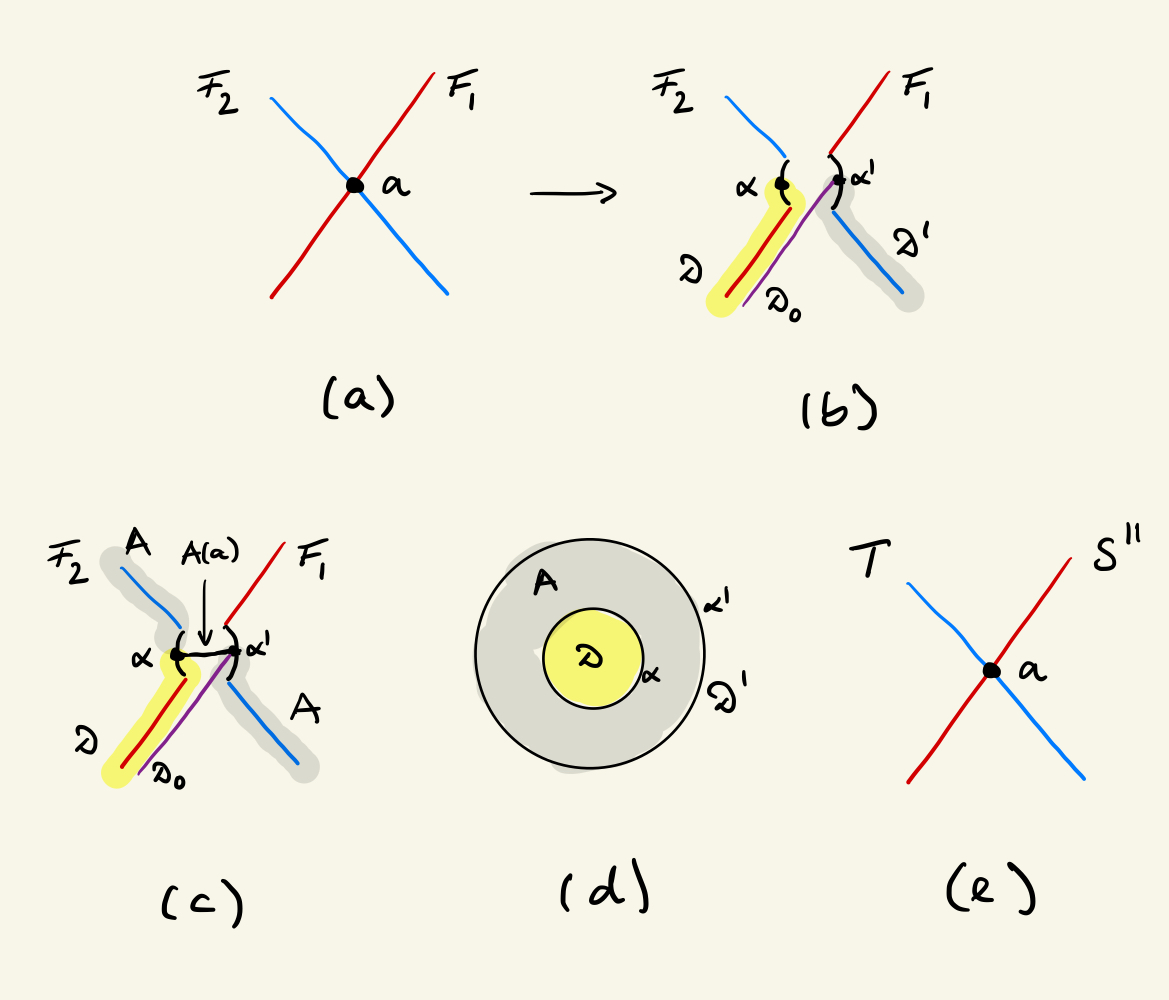}
    \caption{Adjacent discs are not nested}
    \label{fig:Lemma10} 
\end{figure}

\begin{lemma}[Adjacent discs not nested]\label{lem:adjacent_discs_not_nested}
Let $a\subset F_1 \cap F_2$ and denote the associated trace curves $\alpha$ and $\alpha'$. Suppose $\alpha$ bounds a disc $D \subset F_1 + F_2$, $\alpha'$ bounds a disc $D' \subset F_1 + F_2,$ and $D$ and $D'$ are adjacent at $a.$ Then $D' \nsubseteq D$ and $D \nsubseteq D'.$
\end{lemma}

\begin{proof}
By symmetry, it suffices to assume that $D \subset F_1,$ $D' \subset F_2,$ $D \subseteq D';$ see \Cref{fig:Lemma10}(a) and (b). Then $A = D' \setminus D$ is an annulus on $F_1+F_2$ with $\partial A = \alpha \cup \alpha'$; see \Cref{fig:Lemma10}(c) and (d). We also have the exchange annulus $A(a)$ with $\partial A(a) = A(a) \cap A = \partial A.$ Let $S$ denote the component of $F_1+F_2$ containing $D'.$ 

Note that if regular exchanges are performed at all components of $F_1\cap F_2$ except for $a,$ then one obtains $(F + \Sigma) \setminus S$ together with an immersed surface $S'$ having the single curve of self-intersection $a.$ Moreover, $S'$ is the intersection of two embedded normal surfaces $T$ and $S''$, where $T$ is a torus normally isotopic with $A(a) \cap A$ and $S''$ is a surface homeomorphic with $S$, see \Cref{fig:Lemma10}(e).

The trace curve $\alpha'$ bounds a disc $D_0$ in $M$ such that $D_0 \cap (F+\Sigma) = D_0 \cap A(a) = \alpha'.$ The disc $D_0$ is a parallel copy of $D;$ see \Cref{fig:Lemma10}(b).

Since $M$ is irreducible, the 2--sphere $D_0 \cup D' = D_0 \cup A \cup D$ bounds a 3--ball $B$ in $M.$ 

Suppose $S$ is not a 2--sphere. Then $S$ is either a component of the essential surface $F$ or a component of $\Sigma$ linking an ideal vertex. In either case, $S$ is a closed orientable incompressible surface of positive genus. Hence $S\setminus D'$ is not contained in $B.$ We can therefore isotope $D$ to $D_0$ across the product structure of $A$ such that it extends to an isotopy of $S.$ It follows that $S''$ is isotopic with $S$ and $\wt(S) = \wt(S'') + \wt(T) >  \wt(S'').$ If $S$ is a component of $F,$ we obtain a contradiction to the assumption that $F$ is of least weight, since in this case no component of $F$ is contained in $B$, and hence the isotopy extends to an isotopy of $F$ that fixes all components except for $S.$ Hence $S$ is a component of $\Sigma.$ However, each component of $\Sigma$ is separating. Hence $S''$ is separating. The annulus $A$ is disjoint from $S''$ and its boundary curves $\alpha$ and $\alpha'$ are on different sides of $S''$. This is impossible. 

It follows that $S$ is a 2--sphere. Hence $S$ only contains normal triangles. This implies that the three normal surfaces in the sum $S = T + S''$ are only made up of normal triangles. Hence the torus $T$ is a vertex linking surface and therefore incompressible. But $T$ is normally isotopic with $A(a) \cap A$ and this has $D_0$ as a compression disc. This is impossible.
\end{proof}

\begin{lemma}\label{lem:no_disc_patches_on_F}
No patch of $ F_1 + F_2$ on $F$ is a disc.
\end{lemma}

\begin{proof}
We first adapt the argument from \cite[Lemma 2.1]{jaco84-haken}, also borrowing observations from \cite[Lemma 3]{tollefson98-quadspace}.

\begin{quote}
Suppose that a patch $P$ of $F_1 + F_2$ is a disc and contained on the component $S$ of $F.$ Choose notation so that $P \subset F_1$ and denote $a$ the component of $F_1 \cap F_2$ that determines $P.$ The curve $a$ contributes two trace curves in $F + \Sigma,$ say $\alpha$ (bounding $P$) and $\alpha'.$

The trace curve $\alpha'$ is in $F+\Sigma$ and bounds a disc $D_0$ in $M$ such that $D_0 \cap (F+\Sigma)= \alpha'.$ The disc $D_0$ is a parallel copy of $P;$ see \Cref{fig:JO-D0}. Hence $\alpha'$ bounds a disc $D'$ in $F+\Sigma.$ 
\end{quote}

\begin{figure}[h]
    \centering
    \includegraphics[width=7cm]{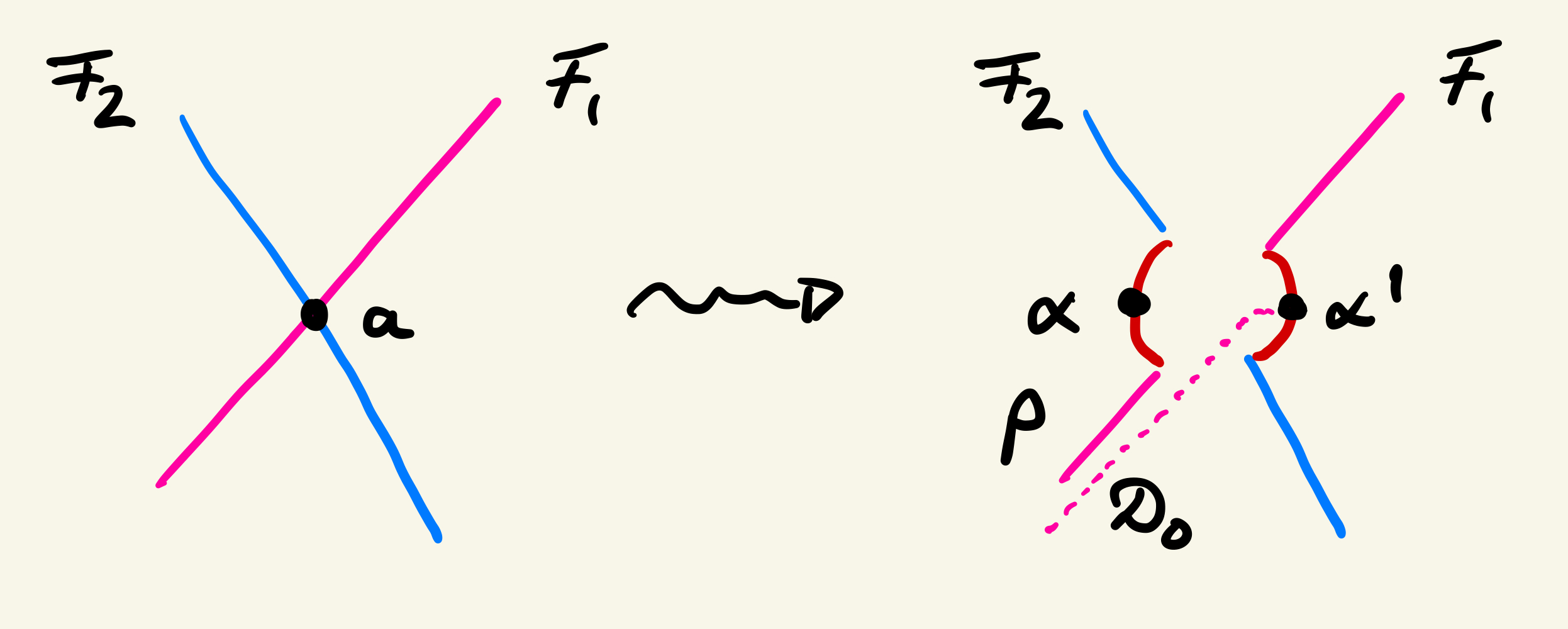}
    \caption{A disc parallel to a patch}
    \label{fig:JO-D0}
\end{figure}

\begin{claim}\label{claim_sphere_switch}
We may choose $D'$ such that $D'$ is adjacent to $P$ at $a.$\footnote{The corresponding claim in \cite[Lemma 2.1]{jaco84-haken} is that if an irregular switch is made at $a,$ then $D'$ and $P$ bound a 2--sphere. The claims are equivalent via \Cref{lem:adjacent_discs_not_nested}.} Moreover, $D'$ is not a patch and it is also contained in $F.$
\end{claim}

\textbf{Proof of claim } 
Suppose the first sentence in the claim is not true.  Then the patch $P'$ adjacent to $P$ at $a$ is not contained in a disc on $F+\Sigma,$ and hence $P'$ lies on a component $S'$ (possibly equal to $S$) of $F+\Sigma$ that has positive genus. Note that $D'$ is a disc on $S'.$ See \Cref{fig:Claim12}(b).

\begin{figure}[h]
    \centering
    \includegraphics[width=11cm]{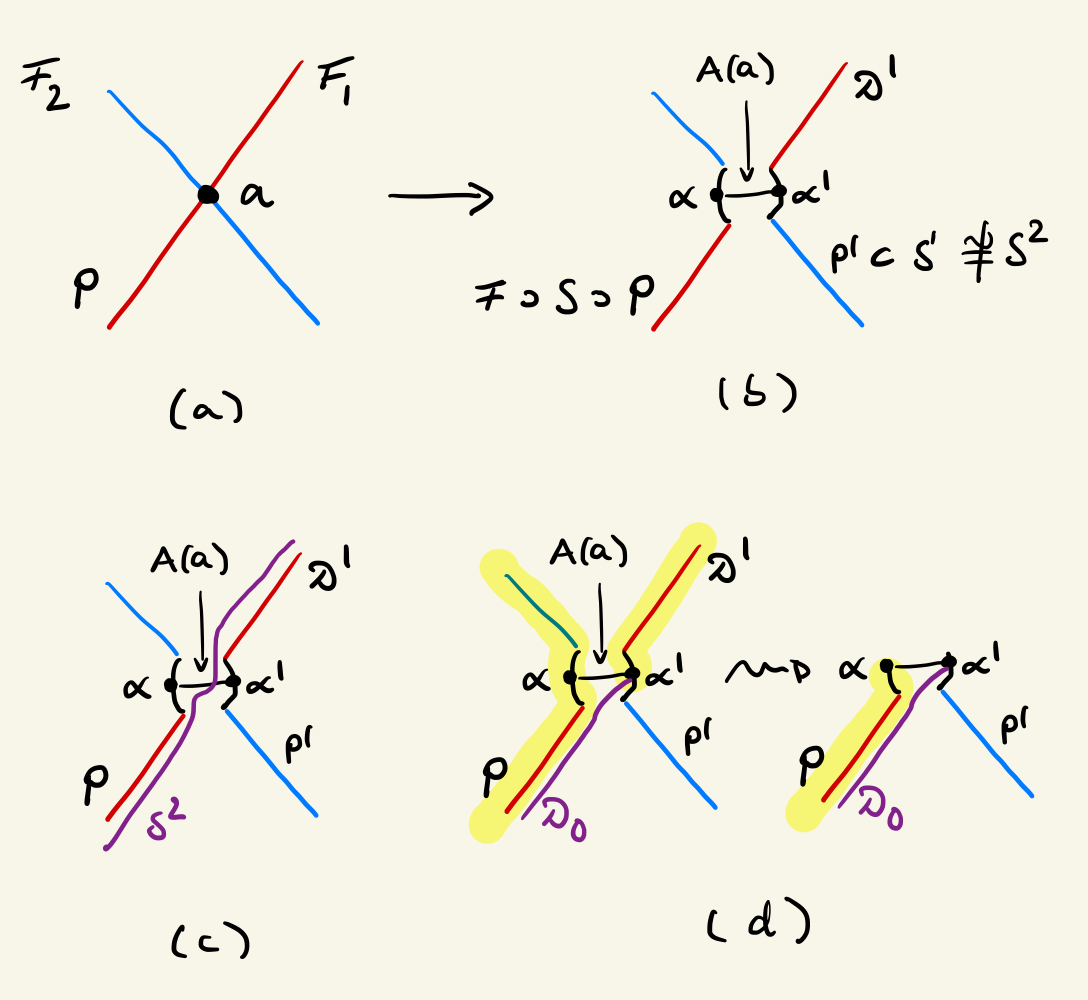}
    \caption{Adjacent disc}
    \label{fig:Claim12} 
\end{figure}

First suppose that $P$ is not a patch in $D'.$ Then the union of $P$, $D'$ and the exchange annulus $A(a)$ is a 2--sphere that can be isotoped to be disjoint from $F+\Sigma.$ The surface $S$ is on one side of this 2--sphere and the surface $S'$ on the other. Since $M$ is irreducible, this implies that a component of $F+\Sigma$ of positive genus would be contained in a 3--ball $B$; see \Cref{fig:Claim12}(c). This is impossible since all components of $F+\Sigma$ of positive genus are incompressible.

Next suppose that $P$ is a patch in $D'.$ In particular, $S = S'$ is a component of $F.$ We obtain a 2--sphere from $D' \cup D_0.$ Since $M$ is irreducible, this 2--sphere bounds a ball $B'$ in $M.$ Since $S$ is incompressible, it is not contained in the interior of $B'.$ As in the previous lemma, we obtain a contradiction to the least weight property of $F$; see \Cref{fig:Claim12}(d).

It follows that we may choose $D'$ to be adjacent to $P.$ 

\begin{quote}
Assume that the disc $D'$ is itself a patch of $F_1 + F_2.$ Then $D' \subset F_2$ and we merely switch the discs $P$ and $D'$, i.e. we make a regular switch only at $a.$ This gives surfaces $F'_i$ isotopic with $F_i$ with $F+\Sigma = F'_1 + F'_2$ and the number of components of $F'_1 \cap F'_2$ is one less than the number of components of $F_1 \cap F_2.$ This is a contradiction to our assumption that $F + \Sigma = F_1 + F_2$ is in reduced form.
\end{quote}

It remains to show that $D' \subset F.$ Hence assume $D' \subset S_1 \subset \Sigma.$ Consider the 2--sphere obtained from $P \cup A(a) \cup D'.$ This bounds a ball $B''$ in $M$ and its interior contains only components of $\Sigma$ linking material vertices. Since $S_1$ is a vertex linking surface, to one side of $D'$ we have the ball $B$ and the component $S$ of $F$, hence to the other side, we have a regular neighbourhood $N(v)$ of a (material or ideal) vertex $v$ with $\partial N(v) = S_1.$ In particular, $(F + \Sigma ) \cap N(v)$ is a finite union of vertex linking surfaces $S_1, S_2, \ldots, S_k.$ See  \Cref{fig:Claim12nested}.

\begin{figure}[h]
    \centering
    \includegraphics[width=8cm]{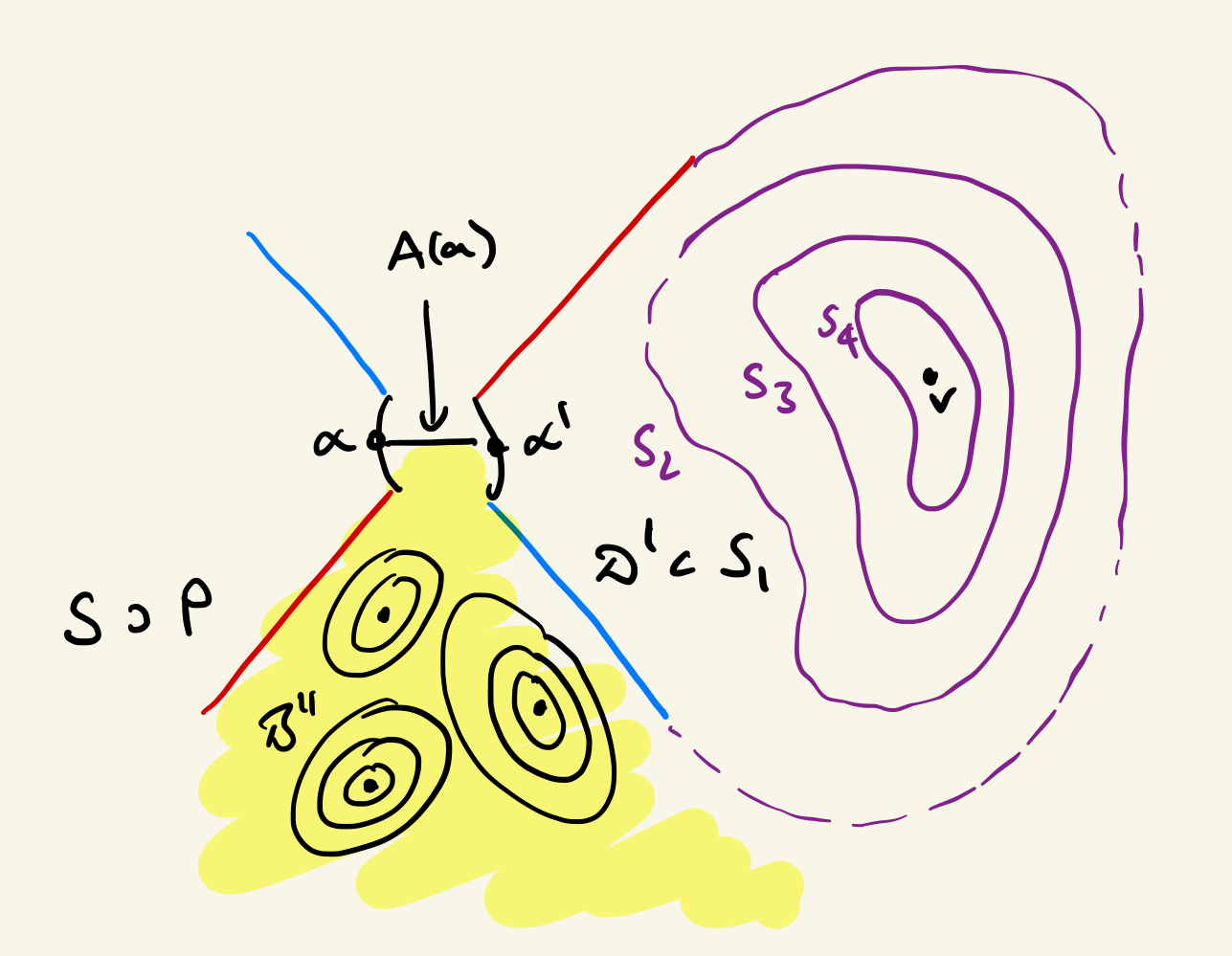}
    \caption{Vertex linking surfaces}
    \label{fig:Claim12nested} 
\end{figure}

Since $D'$ is not a patch it contains an innermost disc patch $P_1\subset D' \subset S_1.$ Its boundary curve is denoted $\alpha_1,$ the corresponding intersection curve $a_1 \subset F_1 \cap F_2$ and the other trace curve $\alpha'_1.$ First note that if $\alpha'_1\subset N(v),$ then $\alpha'_1\subset S_2.$ This follows since $A(a)$ is a 0--weight annulus that is contained in $N(v).$ Consider its non-empty intersection with one tetrahedron. Since $A(a)$ runs between different normal discs, these must be parallel normal triangles since otherwise $A(a)$ would not be contained in $N(v).$ Hence they cannot both be contained in $S_1$ and since $A(a)$ meets $\Sigma$ only in its boundary, it has one boundary component on $S_1$ and one on $S_2.$ Now $\alpha'_1$ bounds a disc on $S_2$. We claim that we may assume that this disc is adjacent to $P_1$. If it is not adjacent, then (since $\alpha_1$ and $\alpha'_1$ are curves on parallel normal triangles), it follows that the link of $v$ must be a sphere. Hence we may choose an adjacent disc on $S_2.$ This cannot be a patch, since otherwise the Haken sum is not reduced. Hence the procedure iterates and we find an innermost disc patch in this disc. Now the ball that we get as above does not contain any other surfaces in $\sigma,$ hence needs to connect to $S_3.$ But this needs to continue indefinitely, contradicting the fact that we only have finitely many surfaces in vertex link.

Hence we need to connect to one of the surfaces in the ball $B''$. Note that there are only finitely many vertices in $B''$ and each only has finitely many vertex linking surfaces. As above, the exchange annulus with boundary on an innermost disc patch either connects to a normally isotopic vertex linking surface (and from then onwards one proceeds towards the associated vertex), or one connects to a surface linking a different vertex. In this case, one constructs a ball $B'''$ containing at least one vertex fewer than $B''$ and the argument repeats; see \Cref{fig:Claim12induction} for a cartoon of the induction.
\begin{figure}[h]
    \centering
    \includegraphics[width=11cm]{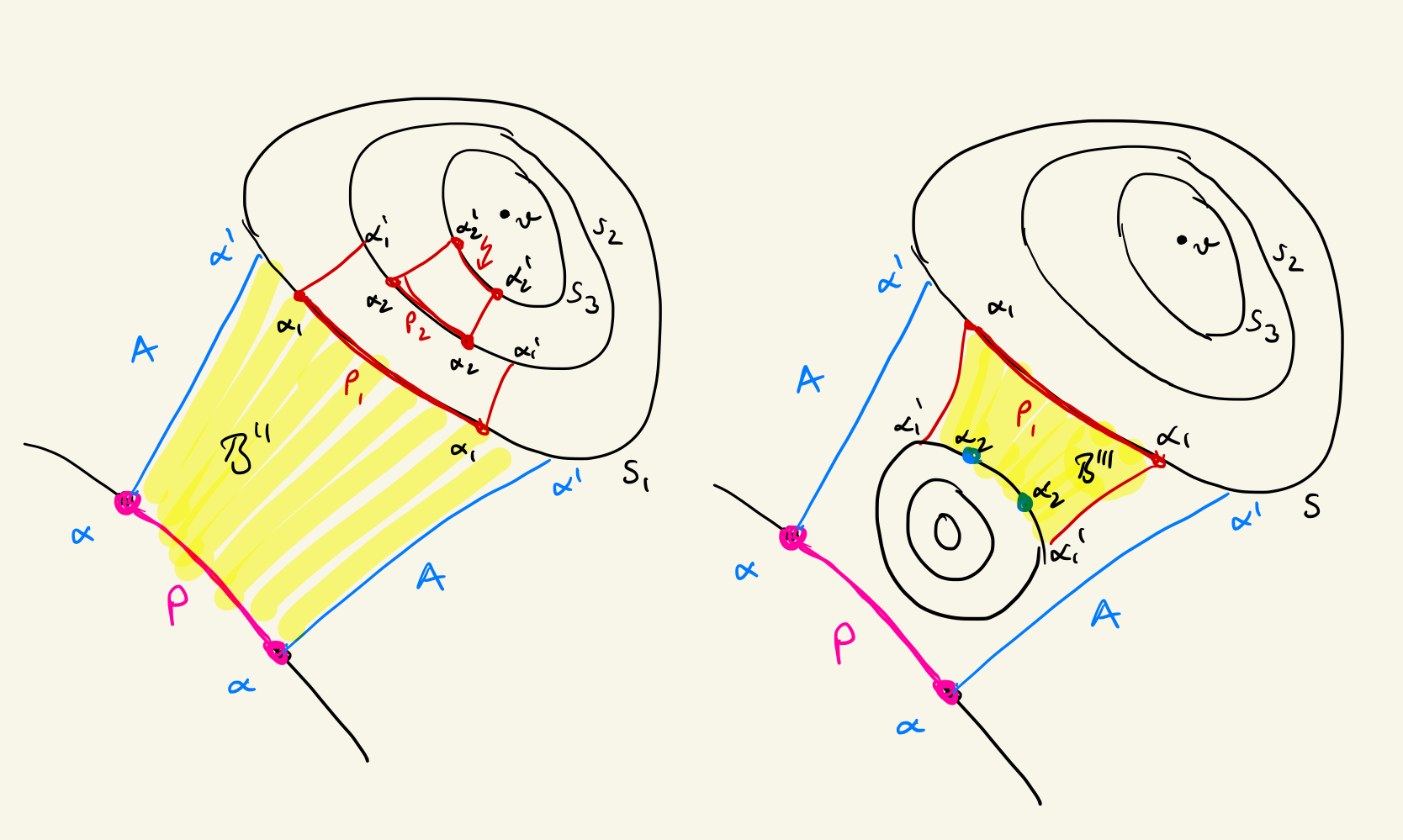}
    \caption{A different perspective on the situation in \Cref{fig:Lemma10}}
    \label{fig:Claim12induction} 
\end{figure}
Since the triangulation has only finitely many vertices and each vertex link only finitely many vertex linking surfaces, one eventually obtains a pair of adjacent disc patches; a contradiction to reduced form.
This proves the claim.
\qed

The proof of the lemma is now completed using a nesting argument. \Cref{claim_sphere_switch} shows that we may assume for any patch $P$ on $F$, which is a disc, that $D',$ the adjacent disc associated with $P$, is not a patch and also contained on $F.$ 

\begin{quote}
Now let $P_0\subset F$ be a patch of $F_1+F_2$ that is a disc and let $P'_0\subset F$ be the disc associated with $P_0$ as above. Since $P'_0$ is not itself a patch, $P'_0$ contains disc patches in its interior. 

We define a sequence of disc patches $P_0, \ldots , P_{n-1}$ inductively: If $P_0, \ldots , P_j$ have the property that none of $P_0, \ldots , P_j$ are patches of $P'_j$, we choose a disc patch $P_{j+1}$ in $P'_j.$ If one of $P_0, \ldots , P_j$ is a patch in $P'_j$, we stop and obtain (after possibly changing notation) a sequence of disc patches $P_0, \ldots , P_{n-1}$ with $P_i$ a patch in $P'_j$ if and only if $j = i-1$ ($1\le i \le n$ reduced modulo $n$).

Let $\alpha_i$ and $\alpha'_i$ be the trace curves that bound the discs $P_i$ and $P'_i$, respectively; and let $a_i$ be the component of $F_1 \cap F_2$ corresponding to $\alpha_i$ and $\alpha'_i,$ $0 \le i \le n.$ Make regular switches in all components of $F_1 \cap F_2$ except $\{a_0, \ldots, a_{n-1}\}.$ 
\end{quote}
From the resulting surface, discard all components that are vertex linking. Recall that all trace curves associated with $\{a_0, \ldots, a_{n-1}\}$ are contained on components of $F.$ Hence we discard $\Sigma$ from the resulting surface.
\begin{quote}
Now at $\{a_0, \ldots, a_{n-1}\}$ instead of making a regular switch, remove the disc $P'_i$ and replace it by a copy of $P_i.$ We get a new normal surface $F'$ in $\tri,$ $F'$ is isotopic in $M$ to $F.$ However, it is possible to connect the annuli $P'_i \setminus P_i$, $1\le i \le n$ (reduced modulo $n$), to get a normal surface $T$, which is a torus or a Klein bottle,\footnote{Indeed, it is a torus as we assumed that all intersection curves are 2--sided.} and $F = F' + T.$ (Notice that $F' \cap T = \{a_0, \ldots, a_{n-1}\},$ see \Cref{fig:JO-telescope}.) However, $\wt(F) = \wt(F') + \wt(T)$ and $\wt(T)\neq 0.$ Since $F'$ is isotopic to $F$ this contradicts the fact that $F$ is of least weight.\newline
Since all possibilities lead to a contradiction, we conclude that no patch of $F + \Sigma$ is a disc.
\end{quote}
\begin{figure}[h]
    \centering
    \includegraphics[width=11cm]{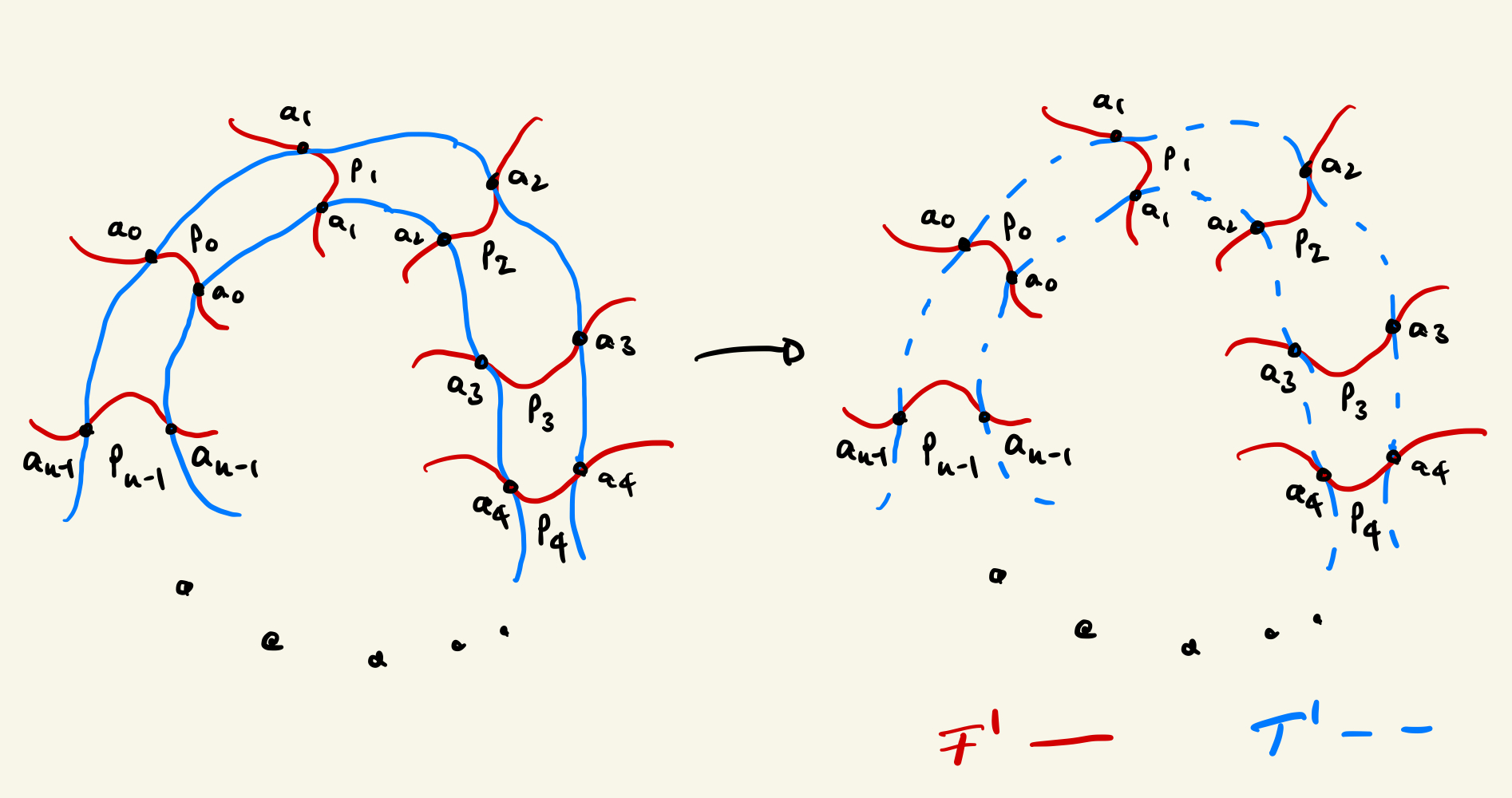}
    \caption{A torus appears}
    \label{fig:JO-telescope} 
\end{figure}
This completes the proof of the lemma.
\end{proof}

We list a number of consequences of \Cref{lem:no_disc_patches_on_F}.
%
Write $\Sigma = \Sigma_m \cup \Sigma_i,$ where $\Sigma_m$ contains all surfaces linking material vertices (that is, all spheres in $\Sigma$).

\begin{corollary}\label{lem:trace_curve_on_sphere}
Let $a \subset F_1 \cap F_2$ and $\alpha$ and $\alpha'$ be the associated trace curves. If $\alpha$ is on $F$, then $\alpha'$ is not on a component of $\Sigma_m.$ \end{corollary}

\begin{proof}
Suppose $\alpha\subset F$ and $\alpha' \subset S \subset \Sigma_m.$ Then $\alpha'$ bounds a disc $D'$ on $S$ from which we can construct a disc $D_0$ in $M$ with $D_0 \cap (F+\Sigma) = \alpha.$ Since $F$ is incompressible, it follows that $\alpha$ bounds a disc $D$ in $F.$ Since the boundary of $D$ is contained in a patch, it follows that $D$ contains a patch that is a disc. This contradicts \Cref{lem:no_disc_patches_on_F}.
\end{proof}

\begin{lemma}\label{lem:trace_curve_on_vertex_links}
Let $a \subset F_1 \cap F_2$ and $\alpha$ and $\alpha'$ be the associated trace curves. If $\alpha$ is on a vertex linking surface $S$, then $\alpha'$ is either on a vertex linking surface normally isotopic with (but not equal to) $S$ or it is on $F.$
\end{lemma}
\begin{figure}[h]
    \centering
    \includegraphics[width=11cm]{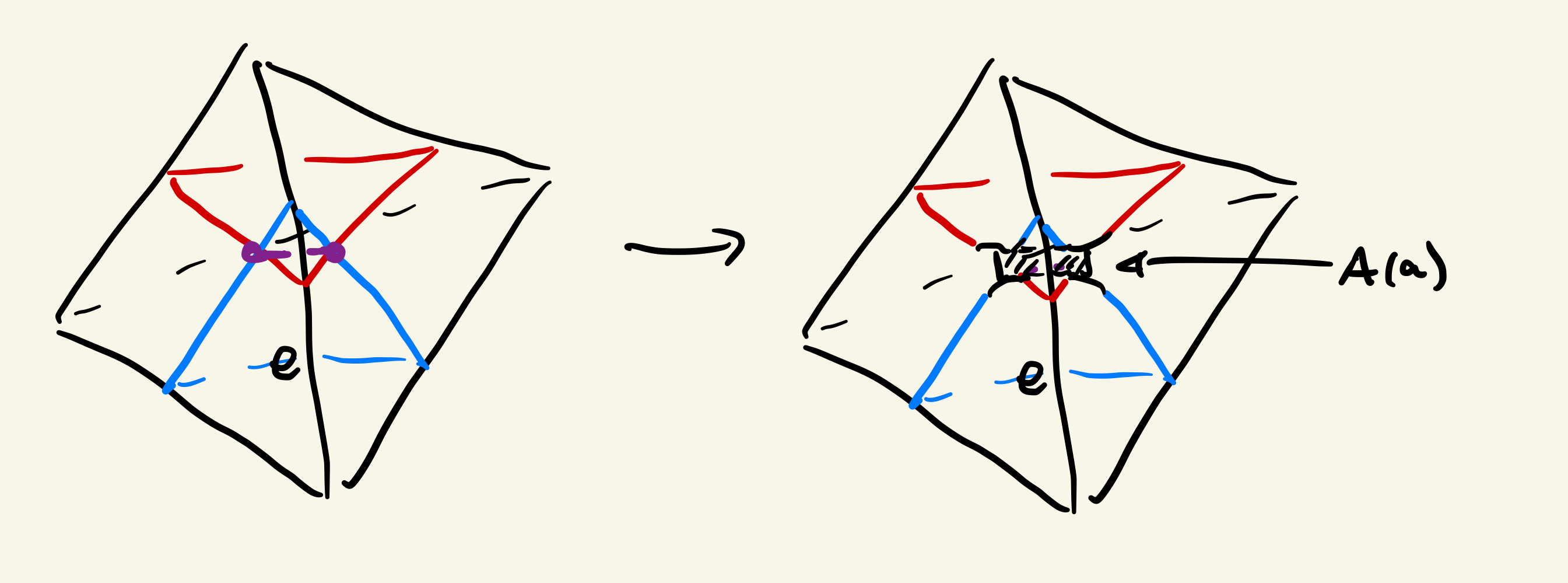}
    \caption{$0$--weight annulus that only spans triangles}
    \label{fig:Lemma14} 
\end{figure}
\begin{proof}
Suppose $\alpha'$ is on a vertex linking surface $S'$ (possibly equal to $S$). Suppose $S'$ is either equal to $S$ or not normally isotopic with $S$. Then $A(a)$ runs between triangles in a tetrahedron that are not normally isotopic.
Since $A(a)$ has boundary only on triangles, propagating around an edge shows that $A(a)$ links an edge $e$ in the triangulation (it may meet a tetrahedron more than once) because every triangle disc is uniquely determined by its intersection with any face of a tetrahedron. See \Cref{fig:Lemma14}. The boundary curves of $A(a)$ bound two adjacent discs $D$ and $D'$ of weight one, attained at $e$. If these are patches, then we have a contradiction to reduced form. The only other possibility is that there are trace curves nested in at least one of these discs, say $D$. Since $A(a)$ meets $F+\Sigma$ only in its boundary curves, any such innermost trace curve is the boundary of an exchange annulus with one boundary curve on $S$ and the other must be on a parallel component to $S.$ We now again obtain a contradiction to reduced form by induction as in the penultimate paragraph in the proof of \Cref{claim_sphere_switch}.
\end{proof}

\begin{corollary}\label{cor:no_material_links}
$\Sigma_m = \emptyset$ 
\end{corollary}

\begin{proof}
We have $F + \Sigma = F_1 + F_2.$ 
Since for each component of $F_1\cap F_2$ the two associated trace curves are either both on $F+\Sigma_i$ or both on $\Sigma_m$, it follows that we can write $F_i = F'_i \cup F''_i$ such that $F+\Sigma_i= F'_1 + F'_2$ and $\Sigma_m= F''_1 + F''_2.$ Hence all components of $F''_1$ and $F''_2$ are only made up of normal triangles and hence are vertex linking surfaces. Since $F_i$ has no vertex linking components, this implies $F''_i= \emptyset.$ Hence $\Sigma_m=\emptyset.$
\end{proof}

\begin{corollary}\label{cor:no_disc_patches}
No patch of $F_1 + F_2$  is a disc.
\end{corollary}

\begin{proof}
It remains to show that no patch of $F_1 + F_2$ contained in $\Sigma=\Sigma_i$ is a disc because 
\Cref{lem:no_disc_patches_on_F} shows that no patch contained in $F$ is a disc and \Cref{cor:no_material_links} shows that $\Sigma_m = \emptyset$.

Suppose a patch $P$ on $\Sigma_i$ is a disc, contained on the surface $S$ linking $v.$ Let $\alpha = \partial P$ and note that the corresponding trace curve $\alpha'$ bounds an embedded disc and hence cannot be on $F.$ Hence is also on $\Sigma.$ Then \Cref{lem:trace_curve_on_vertex_links} implies that it is on a normally isotopic but different surface $S'$. We again obtain a contradiction via an induction as in the penultimate paragraph in the proof of \Cref{claim_sphere_switch}.

There are two cases to consider. If $S'$ is outside of the vertex neighbourhood $N(v)$ bounded by $S,$ then, since $S$ is not a sphere and no patch on $F$ is a disc, there must be infinitely many surfaces normally isotopic to $S$ outside of $N(v).$ If $S'$ is inside $N(v)$ we again obtain an infinite sequence of vertex linking surfaces in $N(v).$ Both scenarios are impossible.
\end{proof}

\begin{lemma}\label{lem:incompressible_patches_ideal}
Each patch of $F_1 + F_2$ is incompressible.
\end{lemma}

\begin{proof}
As in the proof of \cite[Lemma 2.1]{jaco84-haken}, we note that this follows from \Cref{cor:no_disc_patches}: If some patch $P$ is compressible, then there is a disc $D \subset M$ with $D \cap P = \partial D \cap P$ and $\partial D$ does not bound a disc in $P.$ Since $F+\Sigma$ is incompressible, we can also assume that $D \cap F = \partial D \cap F.$ Now $\partial D$ bounds a disc $D'$ on $F+\Sigma.$ But then $D'$ contains a patch that is a disc, which is not possible by the above.
\end{proof}

\subsection{Proof of \Cref{thm-mixed:some extremal is closed essential}}
\label{subsec:proof-of-J-O}

Suppose $M$ is the interior of an irreducible and $\partial$--irreducible, compact, orientable 3--manifold with (possibly empty) boundary, and let $\tri$ be a mixed triangulation of $M.$

Suppose $M$ contains a closed, essential surface. Haken's approach to normalising surfaces with respect to triangulations (as described, for instance, in \cite[\S 3.1]{jaco03-0-efficiency} and \cite[Chapter 3]{matveev03-algms}) implies that there is a closed, essential, normal surface $F$ in $M.$ It remains to show that $F$ may be chosen such that $F$ is a $Q_0$--vertex surface. Replace $F$ by a normal surface that has least weight amongst all normal surfaces isotopic (but not necessarily normally isotopic) to $F.$ We let $F$ denote this surface. 

Suppose $F$ is not a $Q_0$--vertex surface. According to \Cref{lem:sum_of_verices}, we can write $$n x(F) = \sum n_j x(V_j),$$ where $n, n_j \in \mathbb{N}$ and each $V_j$ is a $Q_0$--vertex surface. 
Hence we may write
\begin{equation}\label{eq:HakenSumExtremal}
nF + \Sigma = V + W
\end{equation}
where $\Sigma$ is a finite sum of pairwise disjoint vertex linking surfaces disjoint from $nF$, $V = V_i$ and 
\[x(W) =  -x(V_i) + \sum_{j\neq i} n_j x(V_j)\]
Moreover, \Cref{cor:no_material_links} implies that each surface in $\Sigma$ has positive genus. 
Now $nF$ is 2--sided and of least weight and each intersection curve in $V \cap W$ is 2--sided. We now adapt the proof of \cite[Theorem 2.2]{jaco84-haken} to show that $V$ is incompressible. To avoid confusion, we call the surface $V$ in \Cref{eq:HakenSumExtremal} the \textbf{extremal surface summand}.

Write $nF + \Sigma = V' + W'$ in reduced form with $V'$ isotopic with $V$ and $W'$ isotopic with $W$ in $M.$ We know that no patch of $V' + W'$ is a disc (\Cref{cor:no_disc_patches}), each patch is incompressible (\Cref{lem:incompressible_patches_ideal}) and $\Sigma$ consist only of surfaces linking ideal vertices (\Cref{cor:no_material_links}). In particular, $nF + \Sigma$ is incompressible.

\textbf{The extremal surface summand is not a sphere.}
Suppose first that $V$ is a 2--sphere. Then $V'$ is a 2--sphere. If $V' \cap W'\neq \emptyset,$ then the curves in $V' \cap W'$ divide $V'$ into patches. Since $V'$ is a 2--sphere, at least one of these patches is a disc. But this contradicts \Cref{cor:no_disc_patches}: no patch is a disc if $nF + \Sigma = V' + W'$ is in reduced form. Hence $V' \cap W'=\emptyset.$ Then $V'$ is a component of $nF + \Sigma.$ Since $V'$ contains at least one quadrilateral disc, it is not a component of $\Sigma.$ Hence $V'$ is a component of $nF.$ But $nF$ is an essential surface, so in particular $V'$ is not a 2--sphere. This contradicts our assumption that $V$ is a 2--sphere. Hence $V$ is not a 2--sphere.

Since $V = V_i$ is an arbitrary summand from $n x(F) = \sum n_j x(V_j),$ it follows that none of the vertex surfaces $V_j$ is a 2--sphere. Hence $\chi (V_j) \le 0.$ This, together with the fact that Euler characteristic is additive over Haken sums and the fact from \Cref{cor:no_material_links} that no surface in $\Sigma$ is a sphere implies the statement concerning the Euler characteristic. It remains to show that $V_i$ is essential. We first show that $V_i$ is incompressible and then that it is not boundary parallel.

\textbf{The extremal surface summand is incompressible.}
Since $V'$ is isotopic to $V$, it suffices to show that $V'$ is incompressible. 

\begin{quote}
Assume that $V'$ is compressible. Then there is a disc $D \subset M$ such that $D \cap V' = \partial D$ and $\partial D$ is not contractible in $V'.$

Amongst all discs $D\subset M$ such that $D \cap V' = \partial D$ and $\partial D$ not contractible in $V'$, choose one, still denoted by $D,$ so that $D$ is transverse to $W'$ and $D\cap W'$ has a minimal number of components. 
\end{quote}

As in the proof of \cite[Theorem 2.2]{jaco84-haken} we first show that $nF+\Sigma$ incompressible and $M$ irreducible implies that $D \cap W'$ is a non-empty union of spanning arcs for $D$. All the ideas in the following paragraphs are from \cite{jaco84-haken}, just slightly paraphrased.

Suppose some component of $D\cap W'$ is a simple closed curve. Then there is some simple closed curve $\gamma \subset  D\cap W'$ such $\gamma$ bounds a disc $D'\subset D$ and $D'\cap W' = \gamma.$ Such $\gamma$ is called \textbf{innermost}. Since $\gamma$ is disjoint from $V'$, it is contained in a patch $P$ of $V'+W'.$ Since each patch is incompressible, $\gamma$ bounds a disc $D''\subset P.$ Now $D''$ may contain components of $D\cap W'.$ Each such component is a simple closed curve since it is contained in the interior of $D$ and the intersection is transverse. We now choose an innermost curve $\gamma'$ on $D'',$ i.e. $\gamma'$ bounds a disc $D''' \subset D''$ and $D''' \cap D' = \gamma.$ This implies that $\gamma'$ also bounds an innermost disc $D''''$ on $D$, and hence we obtain a 2--sphere $D''' \cup D''''$ in $M$ with the property that $(D''' \cup D'''') \cap (V' \cup W') = D'''.$ Since $M$ is irreducible, this 2--sphere bounds a ball in $M$, and since each patch is incompressible and no patch is a disc, this ball only meets $V' \cup W'$ in $D'''.$ Hence there is a isotopy of $D$ across a small neighbourhood of this ball that reduces the number of intersections of $D$ with $W'$ by one (namely $\gamma'$). This contradicts the choice of $D.$ Hence $D \cap W'$ only contains spanning arcs for $D,$ and hence $D$ has the least number of spanning arcs amongst all such compression discs for $V'$ that are transverse to $W'.$

Now the spanning arcs cut $D$ into a number of regions. A region $\Delta$ is a disc in $M$ and its boundary consists of arcs that alternate between $V'$ and $W'$. Each endpoint $p$ of such an arc lies on some curve $a$ of intersection in $V' \cap W'$. Depending on the regular exchange at $a$, the arcs on $V'$ and $W'$ that meet at $p$ can be connected to give an arc on $nF + \Sigma$ (this is called a \textbf{good} corner of $\Delta$) or they cannot be connected (called a \textbf{bad} corner of $\Delta$). See \Cref{fig:GoodBad}, where \emph{the good} corner is indicated with a $g$ and \emph{the bad} corner with a $b$ \emph{and the ugly} terminology is from \cite{jaco84-haken}.
\begin{figure}[h]
    \centering
    \includegraphics[width=11cm]{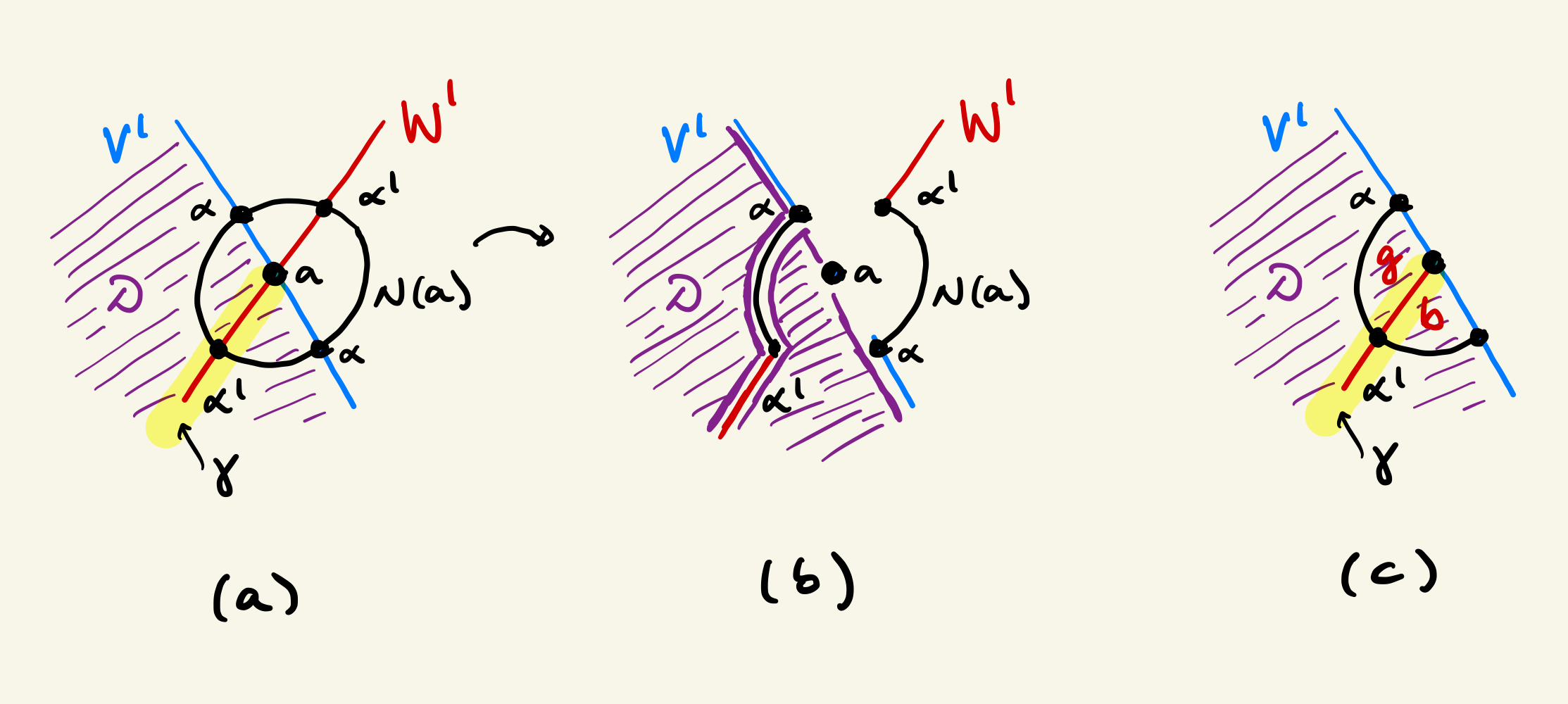}
    \caption{The good, the bad and the ugly}
    \label{fig:GoodBad} 
\end{figure}
The following counting argument credited to Haken shows that there is at least one region with at most one bad corner: $k$ spanning arcs cut the disc into $k+1$ regions. The labelling introduces $2k$ bad corners. Hence at least one region contains fewer than two bad corners.

If $\Delta$ has no bad corner, then $\Delta$ gives a disc (also called $\Delta$) in $M$ with boundary on $nF + \Sigma$ and interior disjoint from $nF + \Sigma.$ Since each component of $nF + \Sigma$ is incompressible, there is a disc $\Delta'$ on $nF + \Sigma$ with boundary equal to $\partial \Delta.$ 
\begin{quote}
Now $\partial \Delta = \partial \Delta'$ can be decomposed into arcs so that the arcs are alternately in $V'$ and $W'$ as one traverses $\partial \Delta'.$ Furthermore, there are parts of trace curves in $\Delta',$ each of which is a spanning arc of $\Delta'$ with its endpoints among the endpoints of the arcs that decompose $\partial \Delta'$ (see \Cref{fig:JOFig5}).
\begin{figure}[h]
    \centering
    \includegraphics[width=10cm]{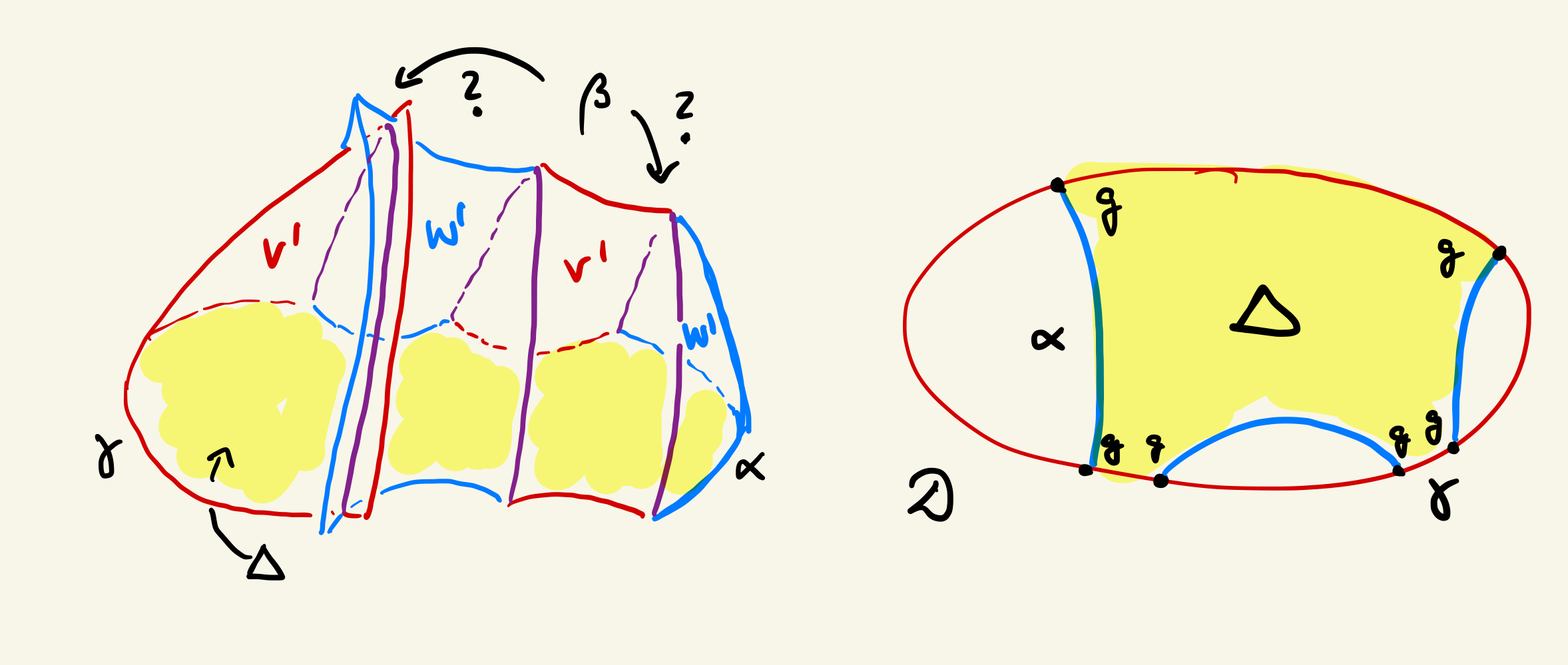}
    \caption{Spanning arcs}
    \label{fig:JOFig5} 
\end{figure}

Let $\beta$ be an outermost arc on $\Delta'.$ That is, $\beta$ is a trace arc which separates $\Delta'$ into discs $E$ and $E'$ so that $E$ meets the collection of trace arcs only in $\beta.$ There are two possibilities for $E.$ Namely $\partial E = \alpha \cup \beta$ where $\alpha \subset W'$ or $\partial E = \gamma \cup \beta$ where $\gamma \subset V'.$

In the case where $\alpha \subset W'$, we consider a boundary compression of $D$ at the disc E. The disc $E$ is a disc in $M$ with $\partial E = \alpha \cup \beta$ where $\alpha$ is an arc in $D$ and $\beta$ is an arc in $W'.$ A boundary compression of $D$ at $E$ gives us two discs $D_1$ and $D_2$ such that $D_i \cap V' = \partial D_i$ and homotopically $\partial D$ is the sum of $\partial D_1$ and $\partial D_2$ in $V'$. By our assumption that $\partial D$ is not contractible in $V'$, one of $\partial D_1$ and $\partial D_2$ is not contractible in $V'.$ However, both $D_1 \cap W'$ and $D_2 \cap W'$ have fewer components than $D \cap W'.$ This contradicts our choice of $D.$

In the case $\partial E = \gamma \cup \beta$ where $\gamma \subset V',$ $E \subset V'$ and we simply slide $\gamma$ across $E$ and past $\beta$ in $V'$ to get an isotopy of $D$ keeping $\partial D$ in $V'$. Furthermore, this isotopy exchanges two components of $D \cap W'$ for one. This is a contradiction to our choice of $D.$
\end{quote}

Now we suppose there is a region $\Delta$ with exactly one bad corner. In this case, one obtains a disc $\Delta$ in $M$ with boundary consisting of two arcs: one arc $\gamma$ contained in a component of $nS + \Sigma$ and one arc $\gamma'$ contained in an annulus in $M$ on the boundary of the neighbourhood of the curve $a \subset V' \cap W'$ containing the bad corner of $\Delta.$ Recall that the trace curves associated with $a$ are contained on incompressible surfaces.
\begin{quote}
Let $N(a)$ be the solid torus neighbourhood about $a$, then $V' \cap B'$ separates $\partial N(a)$ into four annuli (noting that $a$ is orientation preserving on $V'$ and hence on $W'$ also). A regular switch at the curve $a$ puts two of the four annuli in $\partial N(a)$ into $nS + \Sigma$. 

One of the remaining ones meets $\Delta$ in a small spanning arc of $\Delta$ close to the bad corner. So we have an annulus, say $A'$, in $\partial N(a)$ that meets $\Delta.$

In this situation, the disc $\Delta$ determines a disc in $M$, which we shall also call $\Delta$, so that $\partial \Delta$ can be decomposed into two arcs $\delta$ and $\delta'$ with $\delta \subset nS + \Sigma$ and $\delta'$ a nonseparating spanning arc of the annulus $A'$ (see \Cref{fig:JOFig6}).
\begin{figure}[h]
    \centering
    \includegraphics[width=10cm]{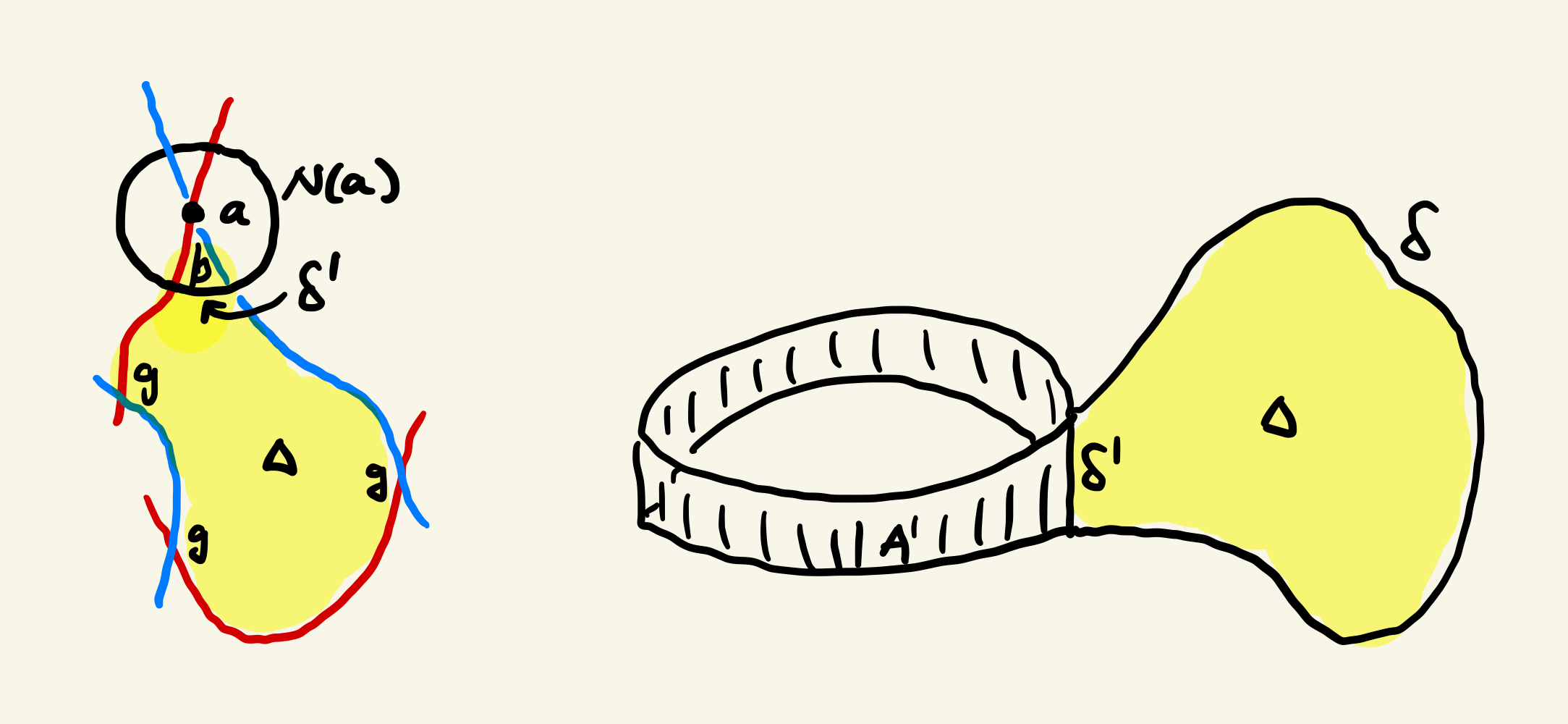}
    \caption{A disc appears}
    \label{fig:JOFig6} 
\end{figure}

Let $\Delta_1$ and $\Delta_2$ be parallel copies of $\Delta$ meeting $A'$ in $\delta'_1$ and $\delta'_2$, parallel to $\delta'$ in $A'$ and meeting $nF + \Sigma$ in $\delta_1$ and $\delta_2$, parallel to $\delta$ in $nF + \Sigma.$ The arcs $\delta'_1$ and $\delta'_2$ divide the annulus $A'$ into two (rectangular) discs $A'_1$ and $A'_2$ where $\Delta \cap A'_2 = \delta'.$ Since the surface $nF + \Sigma$ is assumed to be two-sided, the union of the discs $\Delta_1,$ $\Delta_2$ and $A'_1$ is a disc $\mathcal{D}$ with $\mathcal{D} \cap (nF + \Sigma) = \partial \mathcal{D}.$ so there is a disc $\mathcal{D}' \subset nF+\Sigma$ such that $ \partial \mathcal{D}' =  \partial \mathcal{D}$ (see \Cref{fig:JOFig7}).
\begin{figure}[h]
    \centering
    \includegraphics[width=10cm]{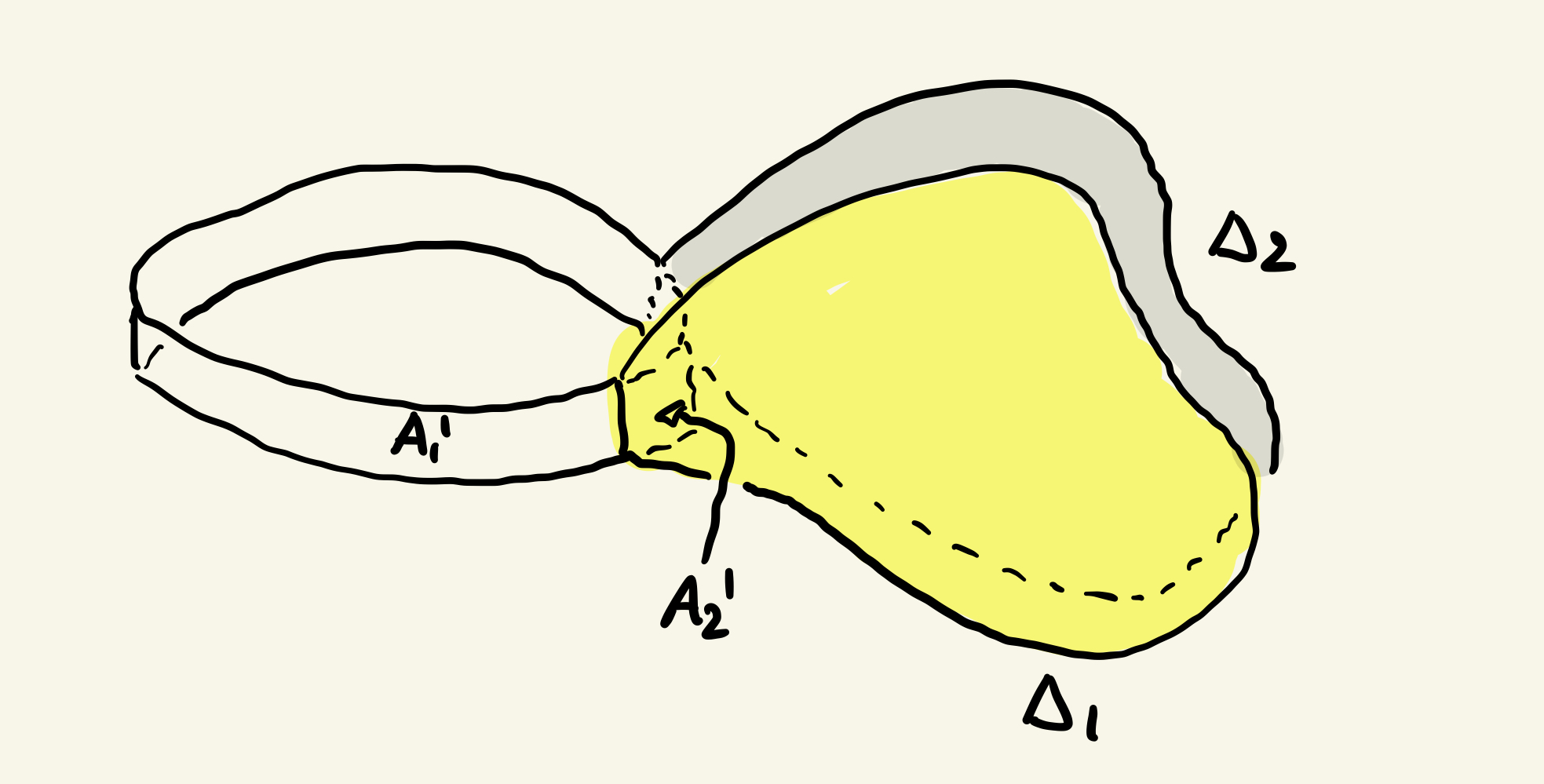}
    \caption{$\mathcal{D} = A'_1 \cup \Delta_1 \cup \Delta_2$}
    \label{fig:JOFig7} 
\end{figure}

First, observe that $\partial A'$ is not contained in $\mathcal{D}'.$ For if this were the case, then since $\partial A'$ is parallel to trace curves in $nF+\Sigma,$ some patch of $V'+W'$ would be a disc. This is a contradiction to \Cref{lem:incompressible_patches_ideal}.

Now the union $\mathcal{D} \cup \mathcal{D}'$ is a 2--sphere in $M$ and so must bound a 3--ball $B$ in $M.$ The disc $\Delta$ is not in the 3--ball $B.$ Otherwise $nF + \Sigma$ would be contained in $B$ and so $F$ could not be incompressible. 

The only possibility is that there is an annulus $A$ in $nF+\Sigma$ with $\partial A = \partial A'$ and the torus $A \cup A'$ bounds a solid torus $T$ with meridian disc $\Delta.$ Since $\Delta$ meets $A'$ in the arc $\delta'$ and $A$ in the arc $\delta$, $T$ is a product between $A$ and $A'.$ So there is an isotopy of $M$ moving $A$ to $A'$ and not moving points outside a small neighbourhood of $T$. Let $F'$ be the image of $nF + \Sigma$ under this isotopy ($F'$ is obtained from $nF + \Sigma$ by exchanging the annulus $A$ in $nF + \Sigma$ for the annnulus $A'$). 
But while $F'$ is not a normal surface with respect to $\tri$ (it contains a fold), it is isotopic in $M$ to a normal surface $F''$ with $\wt (F'') < \wt( nF + \Sigma)$ (see \Cref{fig:JOFig8}).
\begin{figure}[h]
    \centering
    \includegraphics[width=10cm]{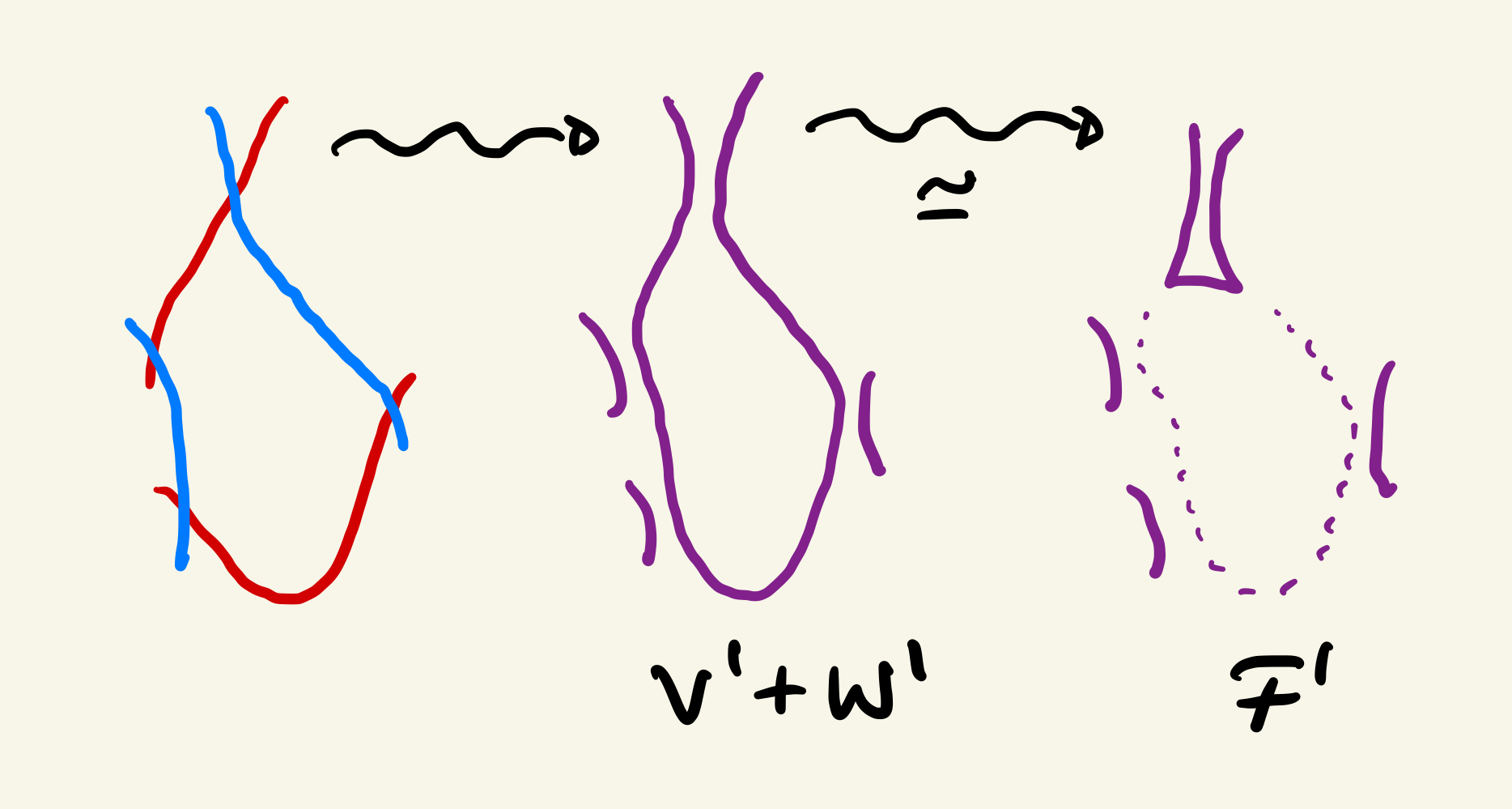}
    \caption{Isotopy across annulus}
    \label{fig:JOFig8} 
\end{figure}
\end{quote}
Do deal with vertex linking surfaces (which we do not know to be of least weight), we need to analyse this situation in a little more detail. Let $S_0$ be the component containing $A.$ Then the isotopy takes $S_0$ to a surface $S'_0$ in the complement of $(nF + \Sigma)\setminus S_0.$ Moreover, the components of $(nF + \Sigma)\setminus S_0$ are barriers to the normalisation of $S'_0.$

If $S_0 \subseteq nF,$ then $S'_0$ normalises to a normal surface of lower weight and hence we have a contradiction to the least weight of $nF.$

Hence $S_0 \subseteq \Sigma.$ In this case note that performing regular exchanges at all curves in $V' \cap W'$ except for $a$, which we leave as an intersection curve, results in the normal surface $(nF + \Sigma) \setminus S_0$ together with one connected immersed normal surface $S''_0$ with a single curve of self-intersection $a.$ After performing a regular exchange along $a,$ $S''_0$ is turned into the vertex linking surface $S_0.$ However, this situation is ruled out by \Cref{lem:trace_curve_on_vertex_links}.

\textbf{The extremal surface summand is not boundary parallel.}
It remains to show that $V$ is not isotopic to a vertex linking surface.
Suppose that $V$ (and therefore $V'$) is isotopic to the link of some vertex of genus $\ge 1.$ Let $B_v$ be such a vertex link isotopic to $V'$ and disjoint from the closed surface $W'.$ Then there is a product region $N \cong B_v \times I$ in $M$ with $\partial N = B_v \cup V'.$ 
Suppose that there is a connected component $U$ of $W' \cap N$ with non-empty boundary. Then $U$ is a patch and \Cref{lem:incompressible_patches_ideal} implies that it is incompressible in $N$. Now $U \cap B_v \subseteq W' \cap B_v = \emptyset$ implies that $\partial U \subset V'.$ Hence \cite[Corollary 3.2]{waldhausen68-large} implies that $U$ is parallel to a subsurface $U'$ of $V'.$ If the interior of $U'$ does not contain any other components of $\partial U,$ then we say that $U$ is \textbf{innermost}. Choosing an innermost component $U$ of $W' \cap N$, we see that performing regular exchanges at all intersection curves in $U \cap V'$ gives a contradiction to the fact that $nF + \Sigma = V' + W'$ is reduced.

Hence $W' \cap V'=\emptyset.$  But then a component of $nF + \Sigma$ is isotopic but not normally isotopic to a vertex linking surface. Hence a component of $nF$ is isotopic to a vertex linking surface, giving the final contradiction. This completes the proof of \Cref{thm-mixed:some extremal is closed essential}. 


\subsection{Example}
\label{subsec:example}

The trivial circle bundle over a once-punctured surface of genus two has a triangulation $\tri_M$ with isomorphism signature 
\begin{center}
{\tt sLLLLPLPMvQAQbefijjlklkjpqqoorrraxaaaaaaaaxhaaaahhh}.
\end{center}
 This triangulation was created for us by Mark Bell using  {\tt flipper}~\cite{flipper}.

There are 29 admissible vertex solutions spanning $Q(\tri_M),$ and this set is identical with the set of fundamental solutions. Exactly 9 of the corresponding fundamental surfaces are thin-edge linking surfaces of genus two. Hence after one compression they reduce to a boundary parallel torus. The remaining 20 vertex surfaces are spun-normal. In particular, no fundamental surface is an essential torus. This example highlights that it is necessary to work with $Q_0(\tri_M)$ in general. It also points to a gap in the proof of \cite[Theorem 5.5]{kang05-spun}; namely spun-normal annuli are not considered. The statement of the Theorem \emph{may} be correct.

The space $Q_0(\tri_M)$ has 81 admissible vertex solutions, and these again coincide with the admissible fundamental solutions. The corresponding surfaces are 9 thin edge linking surfaces of genus two, 4 separating essential tori and the remaining 68 are non-separating essential tori.

\section{Algorithms}
\label{sec:Detecting compressing discs}

We present general algorithms to decide whether any 3--manifold satisfying the hypotheses of \Cref{thm:Tollefson-singular,thm:some extremal is closed essential} contains a closed essential surface.

\subsection{Non-compact manifolds}
\label{subsec:knots algo}
We present the algorithm in two stages below.
\Cref{alg-incompressible} describes a subroutine to test
whether a given \emph{separating} closed surface is incompressible.
\Cref{alg-large} is the main algorithm: it uses the results
of \Cref{sec:closed} to identify candidate essential surfaces, and
runs \Cref{alg-incompressible} over each.

These algorithms contain a number of high-level and often intricate
procedures, many of which are described in separate papers.
For each algorithm, we discuss these procedures in further detail after
presenting the overall algorithm structure.

\begin{algorithm}[Testing for incompressibility of separating surface]
\label{alg-incompressible}
Suppose $\tri$ is known to be an ideal triangulation of the interior $M$ of an irreducible and $\partial$--irreducible, compact, orientable 3--manifold with non-empty boundary.
Let $S$ be a separating, closed, two-sided normal surface of genus $g \geq 1$
within $\tri$.  To test whether $S$ is incompressible in $M$:
\begin{enumerate}
    \item
    \label{en-alg-truncate}
    Truncate each ideal vertex of $\tri$ (i.e.\thinspace remove a small open
    neighbourhood of that vertex) to obtain a compact manifold
    with boundary, cut $\tri$ open along the surface $S$, and retriangulate.
    The result is a pair of triangulations $\tri_1,\tri_2$ representing
    two compact manifolds with boundary $M_1,M_2$ (one on each side of
    $S$ in $M$).

    Let $S_1,S_2$ be the genus $g$ boundary components of $\tri_1$ and
    $\tri_2$ respectively that correspond to the surface $S$, and let $B_k,$ $1 \le k \le |\partial \overline{M}|$ be the remaining boundary components of the triangulations.

    \item
    For each $i=1,2$:
    \begin{enumerate}
        \item \label{en-alg-simplify}
        Simplify $\tri_i$ into a triangulation with no internal
        vertices and only one vertex on each boundary component,
        without increasing the number of tetrahedra.
        Let the resulting number of tetrahedra in $\tri_i$ be $n$.

        \item \label{en-alg-search}
        Search for a connected normal surface $E$ in $\tri_i$
        that is not a vertex link, has positive Euler
        characteristic, and does not
        meet any of the boundary components $B_k$.

        \item \label{en-alg-nodisc}
        If no such $E$ exists, then there is no compressing disc
        for $S$ in $M_i$.  If $i=1$ then try $i=2$ instead, and if $i=2$
        then terminate with the result that $S$ is incompressible.

        \item \label{en-alg-crush}
        Otherwise
        crush the surface $E$
        as explained in \Cref{subsec:Crushing}
        to obtain a new triangulation $\tri'_i$ (possibly disconnected,
        or possibly empty) with strictly fewer than
        $n$ tetrahedra.  If some component of $\tri'_i$ has the same
        genus boundary (or boundaries) as $\tri_i$ then it
        represents the same manifold $M_i$, and we return to
        step~\ref{en-alg-simplify} using this component of $\tri'_i$ instead.
        Otherwise we terminate with the result that $S$ is not incompressible.
    \end{enumerate}
\end{enumerate}
\end{algorithm}

\noindent
Regarding the individual steps of this algorithm:
\begin{itemize}
    \item Step~\ref{en-alg-truncate} requires us to truncate an ideal
    vertex and cut a triangulation open along a normal surface.
    These are standard (though intricate) procedures.
    To truncate a vertex we subdivide tetrahedra and then remove the
    immediate neighbourhood of the vertex.
    To cut along a normal surface is more complex;
    a manageable implementation is described in \cite{burton12-ws}.

    \item Step~\ref{en-alg-simplify} requires us to simplify
    a triangulation to use the fewest possible vertices, without
    increasing the number of tetrahedra.
    For this we begin with the rich polynomial-time simplification
    heuristics described in \cite{burton12-regina}.  In practice, for all
    $2979$ knots that we consider in \Cref{s-results},
    this is sufficient to reduce the triangulation
    to the desired number of vertices.

    If there are still extraneous vertices, we can remove these using
    the crushing techniques of Jaco and Rubinstein
    \cite[Section~5.2]{jaco03-0-efficiency}.  This might
    fail, but only if  $\partial M_i$ has  a compressing disc,
    or two boundary components of $M_i$ are separated by a sphere; since $M$ is $\partial$--irreducible and irreducible
    both cases immediately certify that the surface $S$
    is compressible, and we can terminate immediately.

    \item Step~\ref{en-alg-search} requires us to locate a connected
    normal surface $E$ in $\tri_i$ that is not a vertex link, has
    positive Euler characteristic, and does not meet any of the boundary components $B_k$. 
    For this we use the recent method of
    \cite[Algorithm~11]{burton12-unknot}, which draws on
    combinatorial optimisation techniques:
    in essence we run a sequence of linear programs over a combinatorial
    search tree, and prune this tree using tailored branch-and-bound
    methods.  See \cite{burton12-unknot} for details.

    We note that this search is the bottleneck of
    \Cref{alg-incompressible}:
    the search is worst-case exponential time, though in practice it
    often runs much faster \cite{burton12-unknot}.
    The exposition in \cite{burton12-unknot} works in the setting where
    the underlying triangulation is a knot complement, but the
    methods work equally well in our setting here.
    To avoid the boundary components $B_k$, we simply remove all normal
    discs that touch any of the $B_k$ from our coordinate system.
\end{itemize}

\begin{theorem} \label{thm-incompressible}
    \Cref{alg-incompressible} terminates, and its output is correct.
\end{theorem}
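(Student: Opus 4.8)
The plan is to prove termination and correctness separately, carrying the topological content in a single invariant that is preserved through the loop. For termination it suffices (the outer loop over $i\in\{1,2\}$ being finite) to bound the number of passes through steps~\ref{en-alg-simplify}--\ref{en-alg-crush} for a fixed $i$. Step~\ref{en-alg-simplify} never increases the tetrahedron count (and, as noted after the algorithm, either succeeds or terminates the algorithm), and each pass either terminates the algorithm or re-enters at step~\ref{en-alg-simplify} through step~\ref{en-alg-crush}, in which case the count drops strictly. Indeed, after replacing $E$, if need be, by the boundary of a regular neighbourhood, we may assume $E$ is two--sided; being connected and of positive Euler characteristic in an orientable manifold, $E$ is then a normal sphere or disc, so the Jaco--Rubinstein crushing theorem applies. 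Since $\tri_i$ has no internal vertices and one vertex on each boundary component, a connected triangle--only normal surface is a single vertex link; as $E$ is not a vertex link it carries a quadrilateral, so by the observation closing \S\ref{subsec:Crushing} the crushed triangulation has strictly fewer than $n$ tetrahedra. Thus the tetrahedron count is a strictly decreasing sequence of positive integers over the passes (the algorithm stops as soon as no crushed component carries the right boundary, in particular if the crushed triangulation is empty), so the loop runs only finitely often.

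For correctness I would maintain, for each side $i$, the invariant: every triangulation considered represents a compact orientable 3--manifold $N$ with one distinguished genus--$g$ boundary surface $\Sigma$ (the image of $S$) and remaining boundary components matching $B_1,\dots,B_{|\partial\overline{M}|}$, such that $\Sigma$ is compressible in $N$ if and only if $S$ admits a compressing disc on side $i$ of the original $M$. After step~\ref{en-alg-truncate} this holds with $N = M_i$, since a compressing disc for $S$ in $M$ can be pushed off a neighbourhood of the ideal vertices and so survives truncation. It is preserved by step~\ref{en-alg-simplify} and by the ``continue'' branch of step~\ref{en-alg-crush}: by the crushing theorem the result is obtained from $N$ by cutting along spheres and discs, capping or filling spheres, and discarding $B^3$--, $S^3$--, $\R P^3$--, $L(3,1)$-- or $S^2\times S^1$--components, and a component $N'$ that retains the genus--$g$ boundary pattern is obtained from $N$ by operations that cannot change whether $\Sigma$ compresses. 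This is where the hypotheses enter: the $B_k$ are incompressible because $M$ is $\partial$--irreducible and $E$ misses them, so no operation yields a compression of a $B_k$; a cut along a disc that keeps the genus--$g$ pattern has inessential boundary on $\Sigma$ and so is boundary parallel (or splits off a ball); and a cut along a sphere followed by filling performs only a connected sum decomposition, under which compressibility of a boundary surface lying in one summand is unchanged. The one delicate case is $N$ reducible: then the separating sphere witnessing this, together with irreducibility of $M$ and the invariant, forces $S$ (of genus $\ge 1$) to lie inside a $3$--ball of $M$, and one checks that this, and hence the compressibility of $\Sigma$, persists under the connected sum simplifications that crushing performs.

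Granting the invariant, both outputs are correct. The algorithm reports ``incompressible'' only on reaching step~\ref{en-alg-nodisc} with $i=2$, i.e.\ the search of \cite[Algorithm~11]{burton12-unknot} fails on a triangulation of the current $N$ for both sides. Completeness of that search means $N$ then has no connected non--vertex--link normal disc or sphere of positive Euler characteristic disjoint from the $B_k$; normalising a hypothetical compressing disc for $\Sigma$ while never crossing the incompressible surfaces $B_k$, in the spirit of Jaco--Oertel \cite{jaco84-haken} (the outcome being a normal compressing disc or a normal sphere), then shows $\Sigma$ is incompressible in $N$, so by the invariant $S$ is incompressible in $M$. Conversely, the algorithm reports ``not incompressible'' only when a crushing in step~\ref{en-alg-crush} (or a failed simplification in step~\ref{en-alg-simplify}) destroys the genus--$g$ boundary pattern; for a boundary component of genus $\ge 1$ the crushing theorem permits this only through a cut along a disc with essential boundary on $\Sigma$ (a compression), or a separating sphere detaching the $B_k$ from $\Sigma$, which makes $N$ a nontrivial connected sum and, by the reducible--case analysis above, again forces $\Sigma$ compressible in $N$ (for a failed simplification one invokes the facts recorded after the algorithm). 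Either way the invariant supplies a compressing disc for $S$ on side $i$.

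The main obstacle is justifying the middle paragraph — that a crushed component carrying the original boundary pattern represents ``the same problem''. This reduces to a careful accounting of which Jaco--Rubinstein operations are possible under that constraint, and, in the reducible case, to the topological fact that if $M$ cut along $S$ is reducible on side $i$ then $S$ bounds a solid region inside a $3$--ball of $M$ — a property robust under the connected sum simplifications crushing introduces. The genus $\ge 1$ hypothesis on $S$, and the irreducibility and $\partial$--irreducibility of $M$, enter precisely here and in the companion normalization lemma that keeps a normalised compressing disc clear of the incompressible boundary components $B_k$.
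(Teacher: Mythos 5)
Your overall architecture is essentially the paper's: termination from the strictly decreasing tetrahedron count (a connected non-vertex-link $E$ carries a quadrilateral), and correctness from a case analysis of the Jaco--Rubinstein operations, Jaco--Oertel normalisation of a compressing disc with the $B_k$ incompressible, and the observation that reducibility of a side forces $S$ into a ball of $M$. But there is a genuine error in how you use this last fact. You claim that when the genus-$g$ boundary pattern is destroyed in step~\ref{en-alg-crush} by ``a separating sphere detaching the $B_k$ from $\Sigma$'', this ``forces $\Sigma$ compressible in $N$'', and you conclude ``Either way the invariant supplies a compressing disc for $S$ on side $i$.'' That implication is false. If $M_i$ is reducible, the essential sphere bounds a ball $B$ in $M$ with $S\subset B$, so $S$ is compressible \emph{in $M$} --- but the compressing disc may lie entirely on the other side of $S$. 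Concretely, take $V$ a knotted solid torus inside a ball $B\subset M$ and $S=\partial V$: the side $M\setminus \operatorname{int} V$ is reducible (the sphere $\partial B$ is essential there), yet $S$ is incompressible in that side; the only compressions of $S$ are meridian discs inside $V$. So on the reducible side your per-side biconditional yields no compressing disc, and ``pattern destroyed $\Rightarrow$ $\Sigma$ compresses in $N$'' fails. The paper proves only the weaker, but sufficient, statement: reducibility of $M_i$ already certifies that $S$ is compressible in $M$ (with the disc possibly in the other, irreducible, component --- a point the paper makes explicitly), so terminating with ``$S$ is not incompressible'' is correct regardless of which side carries the disc. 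Your argument is repaired by replacing the per-side conclusion with exactly this global one.

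Two smaller points. First, the ``main obstacle'' you flag --- that a crushed component retaining the boundary pattern represents ``the same problem'' --- is resolved in the paper by proving the stronger statement that such a component is literally the same manifold $M_i$ (in the irreducible case only trivial operations can have occurred, since any disc cut is either inessential, hence splits off a ball, or is a compression of $S_i$, which changes the boundary genera); this stronger statement is also what keeps the $B_k$ incompressible at every later pass, which your normalisation argument in step~\ref{en-alg-nodisc} needs but your weaker equi-compressibility invariant does not by itself supply. Second, ``replacing $E$ by the boundary of a regular neighbourhood'' is not available, since crushing is applied to the normal surface $E$ that was found; instead one rules out $\mathbb{R}P^2$ directly (a one-sided projective plane in an orientable $M_i\subset M$ would force an $\mathbb{R}P^3$ summand or a closed manifold, contradicting irreducibility of $M$ and nonempty boundary), so $E$ is a sphere or disc as in the paper.
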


\begin{proof}
    The algorithm terminates because each time we loop
    back to step~\ref{en-alg-simplify} we have fewer tetrahedra
    than before.  To prove correctness, we now devote
    ourselves to proving the many claims that are made throughout the
    statement of \Cref{alg-incompressible}.
    
    Before proceeding, however, we make a brief note regarding
    irreducibility.
    Since $M$ is irreducible,
    every embedded 2--sphere in $M$ must bound a ball.  As a result, the two manifolds $M_1$ and $M_2$
    are likewise irreducible, with the following possible exception. Suppose $M_j$ is reducible, so there is a sphere $F$ in $M_j$ which does not bound a ball. Since $M$ is irreducible, this sphere bounds a ball in $M$ and hence all boundary components $B_k$ of $M$ are on one side of this sphere. Therefore, they are all boundary components of $M_j$, and the sphere $F$ separates the union of all $B_k$ from $S_j.$ Whence $S$ is contained in a ball in $M$ and therefore compressible. (Note that a compression disc for $S$ may be in the reducible manifold $M_j$ or in the other component, which is necessarily irreducible.)
  
    We proceed now with proofs of the various claims made in
    \Cref{alg-incompressible}.

    \begin{itemize}
        \item \emph{In step~\ref{en-alg-truncate} we claim that cutting along
        $S$ yields two compact manifolds}.

        This is because $S$ is a assumed to be separating.

        \item \emph{In step~\ref{en-alg-nodisc} we claim that, if the surface
        $E$ cannot be found in $\tri_1$ \emph{and} it cannot be found in
        $\tri_2$, then the original surface $S$ must be incompressible.}

        Suppose that $S$ were compressible, with a compression disc
        in some $M_i$.  If this $M_i$ is irreducible,
        then by a result of Jaco and Oertel \cite[Lemma~4.1]{jaco84-haken}
        there is a \emph{normal} compressing disc in $\tri_i$.
        Since the underlying manifold is $\partial$--irreducible, this compressing disc
        must meet the genus $g$ boundary $S_i$ (not any of the  $B_k$), and so it is a surface of the type we are
        searching for.
        If this $M_i$ is reducible then (from earlier) we have that there
        is a properly embedded sphere within $M_i$, which separates the boundary components $B_k$ of $M_i$ from $S_i,$ so it is a surface of the type     we are searching for.
               
        \item \emph{In step~\ref{en-alg-crush} we claim that
        the new triangulation $\tri'_i$ has strictly fewer tetrahedra
        than $\tri_i$.}

        This is because $E$ is connected but not a vertex link, and
        therefore contains at least one normal quadrilateral.
        As noted in \Cref{subsec:Crushing},
        this means that at least one tetrahedron of $\tri_i$ is deleted
        in the Jaco-Rubinstein crushing process.

        \item \emph{In step~\ref{en-alg-crush} we claim that
        if $\tri'_i$ has a component with the same genus boundary
        (or boundaries) as $\tri_i$ then this component represents the same
        manifold $M_i$, and if not then $S$ is compressible.}

        Since the surface $E$ that we crush is connected with positive
        Euler characteristic and can be embedded within an irreducible, orientable 3--manifold with non-empty boundary, it follows that $E$ is either a    sphere or a disc.
        From \Cref{subsec:Crushing}, this means that when we crush $E$
        in the triangulation $\tri_i$, the resulting manifold is obtained
        from $M_i$ by a sequence of \emph{zero or more} of the following operations:
        \begin{itemize}
            \item undoing connected sums;
            \item cutting open along properly embedded discs;
            \item filling boundary spheres with 3--balls;
            \item deleting an entire component which is homeomorphic with a 3--ball, 3--sphere, $\R P^3$, $L_{3,1}$ or
            $S^2 \times S^1$.
        \end{itemize}
             Since $M$ is irreducible and $\partial$--irreducible, cutting $M_i$ open along a properly embedded disc either cuts off a 3--ball or corresponds to a compression of $S_i.$ We now analyse the effect of crushing on $M_i.$

        Suppose that $M_i$ is irreducible.  Then undoing a connected sum
        simply has the effect of creating an extra 3--sphere component (which will be deleted) and filling a boundary sphere with a ball.
        If we ever cut along a properly embedded disc that is \emph{not}
        a compressing disc, then likewise this just creates an extra
        3-ball component.  If we cut along a compressing disc, then
        this yields one or two pieces with strictly smaller total
        boundary genus than before; moreover, since the underlying manifold is $\partial$--irreducible, the first such compression must take place along
        the genus $g$ boundary $S_i$ (not any $B_k$) and so $S$ must be
        compressible. Together these observations establish the full
        claim above: we either terminate with the correctly identified result that $S$ is compressible or return a smaller triangulation of $M_i$ to step~\ref{en-alg-simplify}.

        Suppose $M_i$ is reducible. Then, as above, undoing a connected sum either creates extra 3--sphere components, or we have undone a non-trivial connected sum. In the latter case, one sphere in the associated collection must  separate boundary components of $M_i$, so that no component of $\tri'_i$ has the same boundary as $\tri_i,$ and since this sphere certifies that $S$ is compressible the conclusion is correct. Similarly, cutting along a 
        properly embedded disc that is \emph{not}
        a compressing disc,  creates an extra
        3-ball component, whilst cutting along a compression discs changes the boundary of $M_i.$
Whence in this case, we also either terminate with the correctly identified result that $S$ is compressible or return a smaller triangulation of $M_i$ to step~\ref{en-alg-simplify}.
    \end{itemize}
    This completes the proof of the theorem.
\end{proof}

We can now package together a full algorithm to test for closed essential surfaces:

\begin{algorithm}[Closed essential surface in ideal triangulation] \label{alg-large}
Suppose $\tri$ is known to be an ideal triangulation of the interior $M$ of an irreducible and $\partial$--irreducible, compact, orientable 3--manifold $\overline{M}$ with non-empty boundary.

    To test whether $M$ contains a closed essential surface:
    \begin{enumerate}
        \item \label{en-test-homology}
        Test whether $2b_1(M)>b_1(\partial \overline{M}).$ If yes, then  terminate
        with the result that $M$ contains a closed essential surface.

        \item \label{en-test-enumerate}
        Otherwise enumerate all extremal rays of $Q_0(\tri)$;
        denote these $\mathbf{e}_1,\ldots,\mathbf{e}_k$.
        For each extremal ray $\mathbf{e}_i$,
        let $S_i$ be the unique connected two-sided normal
        surface for which $x(S_i)$ lies on $\mathbf{e}_i$.
        From the previous step, we know that each $S_i$ is separating.

        \item \label{en-test-essential}
        For each non-spherical surface $S_i$,
        use \Cref{alg-incompressible} to test whether $S_i$
        is incompressible in $\tri$. If the genus of $S_i$ is different from the genera of the vertex links of $M$, 	then terminate with the result that $M$ contains a closed essential surface.
        
        \item \label{en-test-special}
        Now each $S_i$ is a sphere or has genus equal to a vertex link.
        So for each non-spherical surface $S_i$, 
         test whether
        $S_i$ is boundary parallel by (i)~cutting $\tri$ open along $S_i$ and truncating all ideal vertices,
        and then (ii)~using the Jaco-Tollefson algorithm
        \cite[Algorithm~9.7]{jaco95-algorithms-decomposition}
        to test whether one of the resulting components is homeomorphic to the product
        space $S_i \times [0,1]$.  If $S_i$ is not boundary parallel, then terminate
        with the result that $M$ contains a closed essential surface. Otherwise all  incompressible surfaces are
        found to be boundary parallel, then terminate with the result that $M$ contains no closed essential surface.     
        \end{enumerate}
\end{algorithm}

\noindent
Regarding the individual steps:
\begin{itemize}
    \item Step~\ref{en-test-homology} requires us to compute homology. This is a standard routine using Smith normal form, and implemented following Hafner and McCurley~\cite{hafner91-snf}.
    \item Step~\ref{en-test-enumerate} requires us to enumerate all
    extremal rays of $Q_0(\tri)$.  This is an expensive procedure
    (which is unavoidable, since there is a worst-case exponential
    number of extremal rays).
    For this we use the recent state-of-the-art tree traversal method
    \cite{burton13-tree}, which is tailored to the constraints and
    pathologies of normal surface theory and is found to be highly
    effective for larger problems.
    The tree traversal method works in the
    larger cone $Q(\tri)$, but it is a simple matter to insert the
    additional linear equations corresponding to $\nu_x=0$.

    We also note that it is simple to identify the unique
    closed two-sided normal surface for which $x(S)$ lies on the
    extremal ray $\mathbf{e}$.  Specifically, $x(S)$ is either
    the smallest integer vector on $\mathbf{e}$ or, if that vector
    yields a one-sided surface, then its double.

    \item Step~\ref{en-test-special} requires
    us to run the Jaco-Tollefson algorithm to test whether any
    incompressible surface is boundary-parallel.
    This algorithm is expensive: it requires us to work in a larger
    normal coordinate system, solve difficult enumeration
    problems, and perform intricate geometric operations. However, in our applications, we work (mostly) with hyperbolic 3--manifolds of finite volume, for which we never enter this step. 
Also, in the case where the boundary of the manifold consists of tori, there are additional fast methods for avoiding the Jaco-Tollefson algorithm. For instance, one may run Haraway's $T^2\times I$ test \cite[Proposition 13]{haraway-determining-2014} in conjunction with the algorithms from \cite{burton12-unknot}.
\end{itemize}

\begin{theorem} \label{thm-large}
    \Cref{alg-large} terminates, and its output is correct.
\end{theorem}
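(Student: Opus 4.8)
The plan is to check termination step by step and then to establish correctness in two parts: that the surfaces $S_1,\dots,S_k$ produced in Step~\ref{en-test-enumerate} are a \emph{complete} family of candidates (i.e.\ $M$ contains a closed essential surface if and only if some $S_i$ is essential), and that Steps~\ref{en-test-essential}--\ref{en-test-special} correctly decide essentiality of each individual $S_i$. Termination is immediate: Step~\ref{en-test-homology} is a Smith-normal-form computation; in Step~\ref{en-test-enumerate} the cone $Q_0(\tri)\subseteq\R^{3t}$ is pointed polyhedral, hence has finitely many extremal rays, and on each one the surface $S_i$ is read off as the smallest admissible lattice point (or its double, if that point is one-sided) using Theorem~\ref{thm:admissible integer solution gives normal}; Step~\ref{en-test-essential} makes finitely many calls to Algorithm~\ref{alg-incompressible}, each halting by Theorem~\ref{thm-incompressible}; and Step~\ref{en-test-special} performs a cut-and-truncate followed by finitely many runs of the Jaco--Tollefson product-recognition algorithm \cite[Algorithm~9.7]{jaco95-algorithms-decomposition}.

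For completeness of the candidate family: if $2b_1(M)>b_1(\partial\overline M)$ then Step~\ref{en-test-homology} is correct by the homology criterion of \S\ref{subsec:homology criterion}. Otherwise $2b_1(M)\le b_1(\partial\overline M)$, and I would first record the consequence that every closed, connected, two-sided surface $S$ in $M$ is separating; this is what makes Algorithm~\ref{alg-incompressible} applicable to each $S_i$ in Step~\ref{en-test-essential}. Indeed, a non-separating such $S$ would determine a nonzero class $\phi\in H^1(\overline M;\Z)$ (the signed-intersection cocycle, nonzero because some loop crosses $S$ exactly once) that restricts to $0$ on $\partial\overline M$ since $S$ is disjoint from $\partial\overline M$; but the half-lives-half-dies count shows $\ker\bigl(H^1(\overline M;\Q)\to H^1(\partial\overline M;\Q)\bigr)$ has dimension $b_1(M)-\tfrac12 b_1(\partial\overline M)\le 0$, so $\phi$ is torsion, hence $0$ as $H^1(\overline M;\Z)$ is torsion-free --- a contradiction. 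In particular each $S_i$ is separating. Now if $M$ contains a closed essential surface, Theorem~\ref{thm:some extremal is closed essential} (indeed its proof) furnishes such a surface that is a $Q_0$--vertex surface, hence connected, two-sided, and with coordinate on an extremal ray of $Q_0(\tri)$; being connected and two-sided it is exactly the surface $S_j$ attached to that ray, and the ray occurs in the enumeration as it supports a normal surface. The reverse implication is trivial, so $M$ contains a closed essential surface if and only if some $S_i$ is essential.

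It remains to verify that Steps~\ref{en-test-essential}--\ref{en-test-special} classify each $S_i$ correctly. As $S_i$ is non-empty, connected and two-sided, it is essential precisely when it is neither a 2--sphere, nor compressible, nor boundary parallel: here I use that ``incompressible'' equals ``$\pi_1$--injective'' for connected non-spherical surfaces (the Papakyriakopoulos-type fact from \S\ref{subsec:essential}) and that in an irreducible, $\partial$--irreducible manifold a boundary-parallel surface is automatically incompressible, so a compressible $S_i$ is certainly not essential. Spherical $S_i$ are discarded at once. For non-spherical $S_i$, Algorithm~\ref{alg-incompressible} decides compressibility correctly by Theorem~\ref{thm-incompressible}; a compressible one is passed over. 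An incompressible $S_i$ is essential exactly when it is not boundary parallel, and a boundary-parallel closed surface in $M$ is isotopic to a component of $\partial\overline M$ and so has the genus of some vertex link; thus an incompressible $S_i$ of ``exotic'' genus is essential (Step~\ref{en-test-essential}), while the remaining incompressible $S_i$, whose genus equals that of a vertex link, are resolved in Step~\ref{en-test-special} by cutting $\tri$ open along $S_i$, truncating ideal vertices, and applying the Jaco--Tollefson product test for an $S_i\times[0,1]$ component. If no $S_i$ is found essential, the equivalence of the previous paragraph gives that $M$ has no closed essential surface, so the final output is correct.

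I expect the real work to be this last classification rather than termination or the homology step: in particular, pinning down that the surface produced by Theorem~\ref{thm:some extremal is closed essential} genuinely is one of the enumerated $S_i$ (which is why that theorem must deliver a \emph{connected, two-sided} surface on an extremal ray, and why the one-sided/two-sided bookkeeping on rays matters), and confirming that Algorithm~\ref{alg-incompressible}, the genus comparison, and the Jaco--Tollefson product test together examine every case, so that no essential $S_i$ can be misreported as compressible or boundary parallel. That case analysis is the delicate part; the rest is routine.
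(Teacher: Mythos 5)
Your proof is correct and takes essentially the same route as the paper: Theorem~\ref{thm:some extremal is closed essential} reduces existence to the enumerated candidates $S_i$, Theorem~\ref{thm-incompressible} guarantees the compressibility tests are correct, the genus comparison disposes of boundary-parallelism for surfaces whose genus differs from every vertex link, and the Jaco--Tollefson product test settles the remaining incompressible candidates. The only difference is that you make explicit details the paper leaves implicit, notably the half-lives-half-dies argument showing each $S_i$ is separating (asserted without proof inside the algorithm statement) and a step-by-step termination check in place of the paper's one-line ``no loops'' remark.
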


\begin{proof}
    The algorithm terminates because it does not contain any loops.
    All that remains is
    to prove that its output is correct.

    Throughout this proof we
    implicitly use \Cref{thm-incompressible} to verify that
    all calls to \Cref{alg-incompressible} are themselves correct.
    We note that the conditions of
    \Cref{thm:some extremal is closed essential} apply.
    
    From \Cref{thm:some extremal is closed essential},
    the manifold $M$ contains a closed essential surface if and only if one of the closed normal
    surfaces $S_i$ in our list (excluding spheres) is essential. We ignore spheres from now onwards.
    We note that each surface in the list that has genus different from all vertex links is essential
    if and only if it is incompressible (since such a surface cannot be
    boundary parallel),
    and each remaining surface in the list is essential if and only if it is
    (i)~incompressible and (ii)~not boundary parallel.

    Steps~\ref{en-test-essential} and \ref{en-test-special} of
    \Cref{alg-large} test precisely these conditions, and so
    the algorithm is correct.  The only reason for the order in
    these steps is so that we can use \Cref{alg-incompressible}
    exclusively if possible, and only fall through to the more expensive
    Jaco-Tollefson algorithm when absolutely necessary.
\end{proof}

\subsection{Closed manifolds}
\label{subsec:HW algorithm}

For reference, we also spell out the algorithm for closed, irreducible, orientable 3--manifolds, which follows the same outline as the algorithm for non-compact 3--manifolds, but with fewer steps. Indeed, cutting along a separating normal surface in a closed 3--manifold results in two compact 3--manifolds which have a copy of this surface as a boundary component. Components of this kind are also dealt with by \Cref{alg-incompressible}.

\begin{algorithm}[Closed essential surface in triangulation] \label{alg-closed-Haken}
Suppose $\tri$ is known to be a (possibly singular) triangulation of the closed, irreducible, orientable 3-manifold $M.$ 
To test whether $M$ contains a closed essential surface:
    \begin{enumerate}
        \item \label{closed-test-homology}
        Test whether $b_1(M)>0.$ If yes, then terminate
        with the result that $M$ contains a closed essential surface.

        \item \label{closed-test-enumerate}
        Otherwise enumerate all extremal rays of $Q(\tri)$;
        denote these $\mathbf{e}_1,\ldots,\mathbf{e}_k$.
        For each extremal ray $\mathbf{e}_i$,
        let $S_i$ be the unique connected two-sided normal
        surface for which $x(S_i)$ lies on $\mathbf{e}_i$.
        From the previous step, we know that each $S_i$ is separating.

        \item \label{closed-test-essential}
        For each non-spherical surface $S_i$,
        cut $\tri$ open along $S_i$ and retriangulate. The result is a pair of triangulations $\tri_1$ and $\tri_2$ representing two compact manifolds with boundary $M_1$ and $M_2.$ Let $S_j = \partial M_j.$ For each $i=1,2:$
         \begin{enumerate}
		\item \label{closed-alg-simplify} 
        Simplify $\tri_i$ into a triangulation with no internal
        vertices and only one vertex on its boundary component,
        without increasing the number of tetrahedra.
        Let the resulting number of tetrahedra in $\tri_i$ be $n$.

        \item \label{closed-alg-search}
        Search for a connected normal surface $E$ in $\tri_i$
        that is not a vertex link and has positive Euler
        characteristic.

        \item \label{closed-alg-nodisc}
        If no such $E$ exists, then there is no compressing disc
        for $S_i$ in $M_i$.  If $i=1$ then try $i=2$ instead, and if $i=2$
        then terminate with the result that $S_i$ is incompressible.
              \item \label{closed-alg-crush}
        Otherwise
        crush the surface $E$
        as explained in \Cref{subsec:Crushing}
        to obtain a new triangulation $\tri'_i$ (possibly disconnected,
        or possibly empty) with strictly fewer than
        $n$ tetrahedra.  If some component of $\tri'_i$ has the same
        genus boundary as $\tri_i$ then it
        represents the same manifold $M_i$, and we return to
        step~\ref{closed-alg-simplify} using this component of $\tri'_i$ instead.
        Otherwise we terminate with the result that $S$ is not incompressible.		
        \end{enumerate}
        If no non-spherical surface $S_i$ has been found to be incompressible, terminate with the result that $M$ contains no closed essential surface.
        \end{enumerate}
\end{algorithm}

The arguments showing that \Cref{alg-closed-Haken} is correct and terminates are analogous to the ones given for \Cref{alg-incompressible} and \ref{alg-large}, and will therefore not be stated again.

\section{Algorithm engineering and implementation}

Since \Cref{alg-large,alg-closed-Haken} have
doubly-exponential running times in theory, they must be implemented
with great care if we are to hope for running times that are
nevertheless feasible in practice.

Some of this comes down to ``traditional'' algorithm engineering---careful
choices of data structures and code flow that make frequent
operations very fast (often using trade-offs that make less
frequent operations slower).  Such techniques are common practice in
software development, and we do not discuss them further here.

What is more important, however, is to implement the algorithms in
such a way that:
\begin{itemize}
    \item if the input to some expensive procedure is
    ``well-structured'' in a way that allows an answer to be seen quickly,
    then the procedure can identify this and terminate early;
    \item if the input is \emph{not} well-structured, then the code
    attempts to find an equivalent input (e.g., a different triangulation
    of the same manifold) that \emph{does} allow early termination as
    described above.
\end{itemize}

For \Cref{alg-large,alg-closed-Haken}, the most
expensive procedure is in step~3 of each algorithm,
where we cut the original manifold open along a normal surface $S_i$
and attempt to either (i)~find a compressing disc in one of the
resulting triangulations $\mathcal{T}_1$ or $\mathcal{T}_2$, or
(ii)~certify that no such compressing disc exists.

For this procedure, the implementations in {\regina} are designed as
follows:

\begin{itemize}
    \item When searching for a compressing disc in each triangulation
    $\mathcal{T}_j$, we begin by optimistically checking
    ``simple'' locations in which
    compressing discs are often found \cite{burton12-ws}.

    This includes looking for discs formed from a single face of the
    triangulation whose edges all lie in the boundary,
    or discs that slice through a single tetrahedron encircling an edge
    of degree~one (\Cref{simplediscs}).

    \begin{figure}[ht]
    \centering
    \includegraphics[scale=0.5]{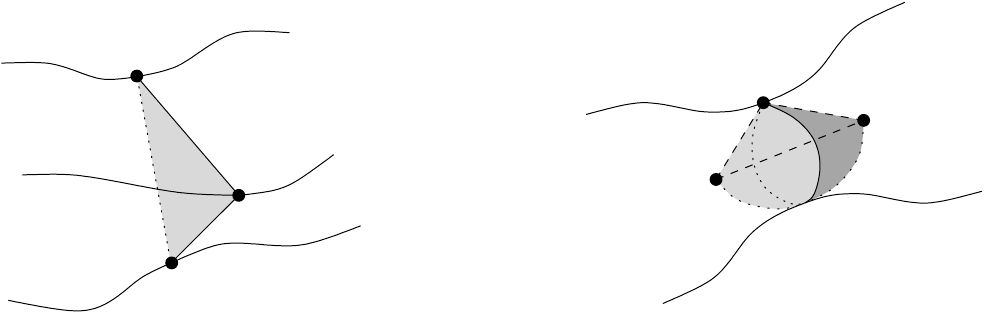}
    \caption{Examples of simple compressing discs} \label{simplediscs}
    \end{figure}

    Such discs are fast to locate and check, and only if no such ``simple''
    discs are found do we fall back to a full search 
    (i.e., a branch-and-bound search for a normal surface with
    positive Euler characteristic, as described earlier in this paper).

    \item After cutting along $S_i$ to obtain the pair of triangulations
    $\mathcal{T}_1$ and $\mathcal{T}_2$, we search for a compressing disc
    in both triangulations $\mathcal{T}_1$ and $\mathcal{T}_2$
    \emph{simultaneously}.

    This allows the procedure to terminate early in the case where $S_i$
    is compressible, since if we find a compressing disc in one
    triangulation $\mathcal{T}_j$ then there is no need to finish
    searching the other.

    \item Extending this idea of parallel processing further:
    when searching each triangulation $\mathcal{T}_j$ for a compressing
    disc, we simultaneously search through several different
    \emph{retriangulations} of $\mathcal{T}_j$ (i.e., different
    triangulations of the same manifold with boundary).

    As before, as soon as any of these retriangulations yields a compressing
    disc then we can immediately terminate the entire procedure for the
    surface $S_i$ (which is now known to be compressible).

    Moreover, this parallelisation also helps in the case where
    $S_i$ is \emph{incompressible}.  It is often found in practice that,
    when searching for a compressing disc, the branch-and-bound search
    for a positive Euler characteristic surface finishes in remarkably
    few steps \cite{burton12-unknot}.
    If this happens with \emph{any} of our retriangulations
    then we will have certified that $\mathcal{T}_j$ does not
    contain a compressing disc, and we can immediately stop processing
    $\mathcal{T}_j$ and instead devote our attention to the manifold on
    the other side of $S_i$.

    Since the performance of the branch-and-bound code in practice depends
    heavily on the combinatorial structure of the underlying
    triangulation, it is important to choose retriangulations of
    $\mathcal{T}_j$ that are as \emph{dissimilar} as possible.
    For this reason, the implementation in {\regina} creates these
    retriangulations immediately after cutting along $S_i$, before
    performing any simplification moves---this increases the chances
    that different retriangulations of $\mathcal{T}_j$ remain
    substantially different even after they are subsequently simplified.
    (In the worst case, if two retriangulations simplify to become
    combinatorially \emph{identical}, then we discard the duplicate and
    attempt yet another retriangulation to take its place.)

    \item When testing the candidate incompressible surfaces $S_i$ in step~3
    of \Cref{alg-large,alg-closed-Haken}, we work
    through these surfaces in order from smallest genus to largest.

    This is because higher genus surfaces are likely to result in larger
    triangulations $\mathcal{T}_j$, which could potentially mean
    \emph{much} longer running times (since testing compressibility
    requires exponential time in the size of $\mathcal{T}_j$).

    If there is no incompressible surface then the total running time is
    not affected (since every surface must be checked regardless).
    If there \emph{is} an incompressible surface, however, then
    processing the surfaces in order by genus makes it more likely that
    an incompressible surface is found before the triangulations
    $\mathcal{T}_j$ become too large to handle.
\end{itemize}

We note that \Cref{alg-large} contains another potentialy
expensive step: testing whether a surface $S_i$ is boundary parallel,
which involves cutting along $S_i$ and testing whether either side forms the
product $S_i \times [0,1]$.
This procedure is \emph{not} optimised in {\tt Regina}\rm, because (for this
paper at least) it does not matter---in every knot complement that we
processed,
\emph{none} of the candidate surfaces $S_i$ were tori (and therefore
none were boundary parallel).

The discussion above only outlines the major optimisations in the
implementation of \Cref{alg-large,alg-closed-Haken}.
For further details, the reader is encouraged to read through the
thoroughly documented source code in {\tt Regina}\rm~\cite{regina}.

\section{Computational results} \label{s-results}

This section gives some additional information about the computational results. The complete data are available at \cite{regina}.


\subsection{Closed manifolds}
\label{subsec:HW census}

The Hodgson-Weeks census contains 11,031 closed, orientable 3--manifolds.
The theoretical running time of our algorithm is $\exp(\exp(O(n))),$ where $n$ is the number of tetrahedra. As stated in the introduction, the number of tetrahedra ranges from $9$ to $32$ over the census. \Cref{fig:hw-time} plots the running times (measuring wall time) for enumerating the candidate surfaces and deciding incompressibility.

\begin{figure}[h]
\begin{center}
  \includegraphics[width=7cm]{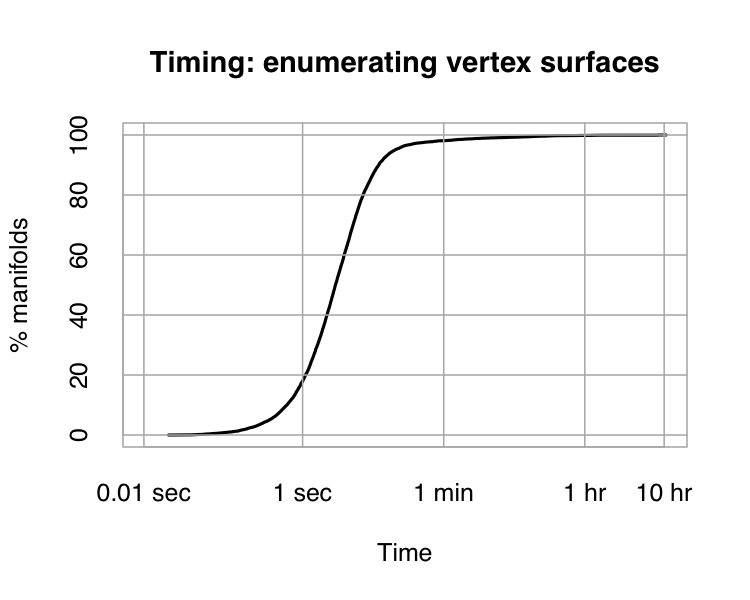}
  \qquad
  \includegraphics[width=7cm]{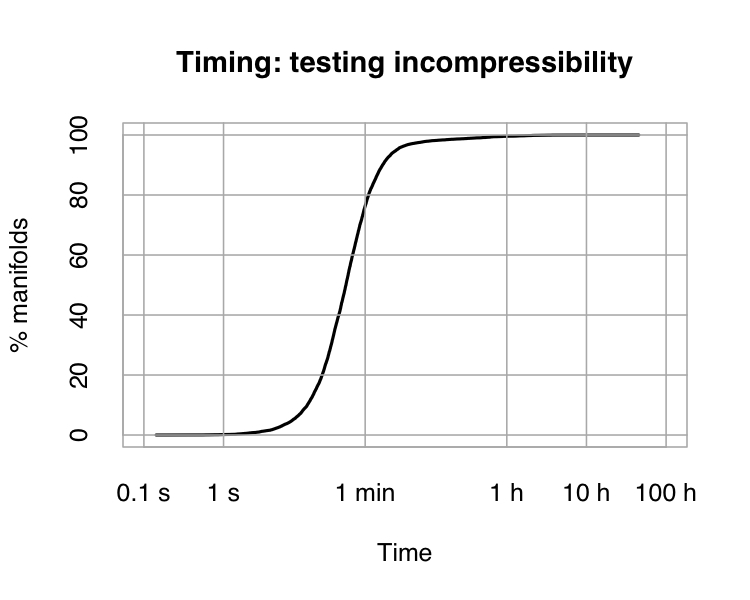}
\end{center}
\caption{Timing for the Hodgson-Weeks census}
\label{fig:hw-time}
\end{figure}

Given a vertex normal surface $S$ in a triangulation of $M$ with $n$ tetrahedra we expect order of $\exp(n)$ tetrahedra in a triangulation of $M\comp S,$ so between $10^4$ and $10^{14}$ tetrahedra after cutting and retriangulating for the census manifolds. \Cref{fig:hw-sliced} shows that in practice the numbers are magnitutes smaller.

\begin{figure}[h]
\begin{center}
  \includegraphics[width=8cm]{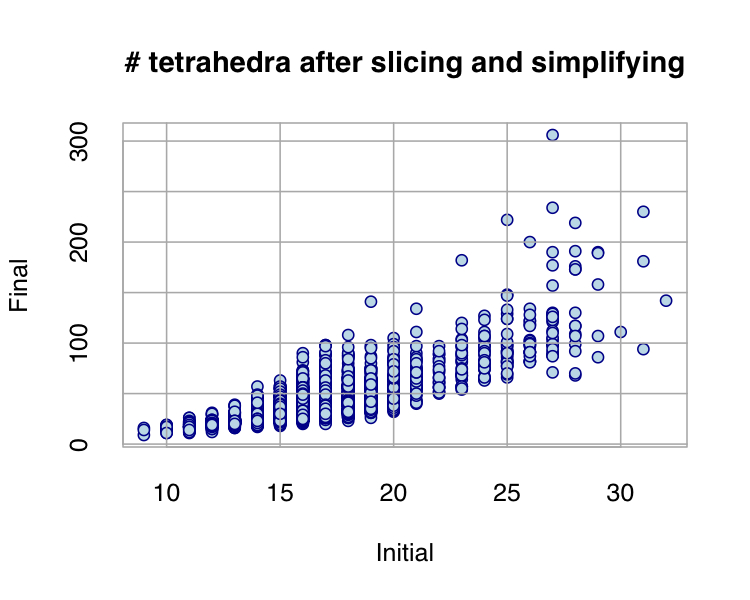}
\end{center}
\caption{Number of tetrahedra after slicing and simplifying}
\label{fig:hw-sliced}
\end{figure}

A plot showing the percentage of Haken manifolds in the census is shown in \Cref{fig:hw-percent} in the introduction.


\subsection{Knot complements}
\label{subsec: knots computation}

The census of all knots in the 3--sphere with at most 14 crossings due to Hoste-Thistlethwaite-Weeks~\cite{hoste98-first} contains 59,924 hyperbolic knots with at most 14 crossings, and the number of ideal tetrahedra used to triangulate their complements ranges from 2 to 33. \Cref{fig:htw-tvc} gives an idea of the number of tetrahedra versus the number of crossings.
\begin{figure}[h]
\begin{center}
  \includegraphics[width=6cm]{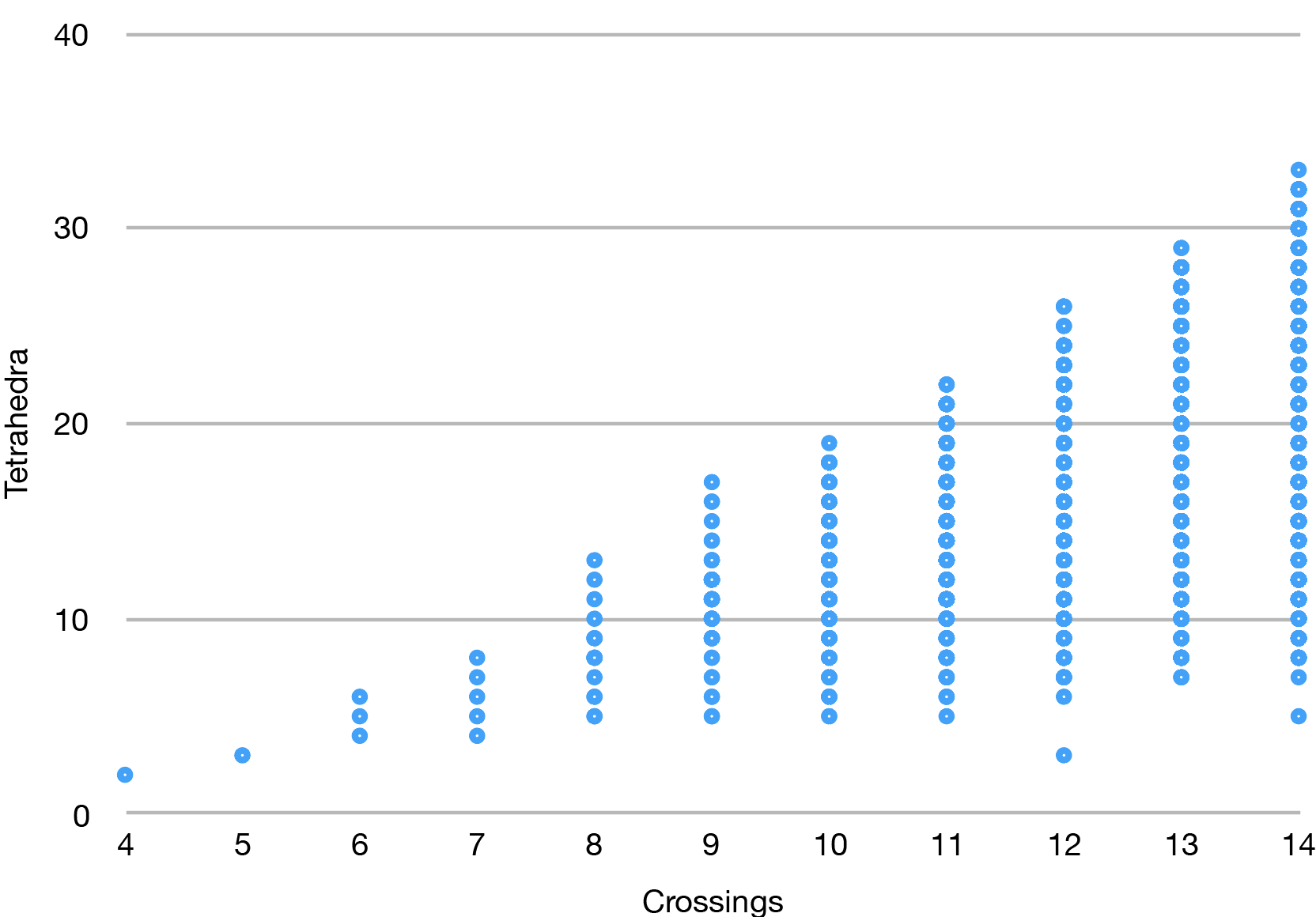}
   \qquad\qquad
   \includegraphics[width=6cm]{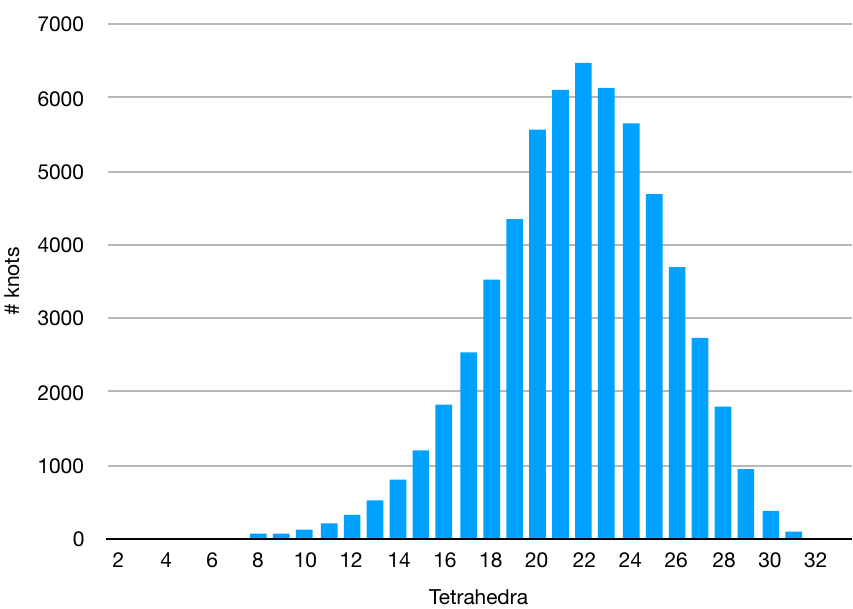}
\end{center}
\caption{Number of tetrahedra versus number of crossings}
\label{fig:htw-tvc}
\end{figure}

\Cref{fig:htw-time} plots the running times (measuring wall time) for enumerating the candidate surfaces and deciding incompressibility.

\begin{figure}[h]
\begin{center}
  \includegraphics[width=6cm]{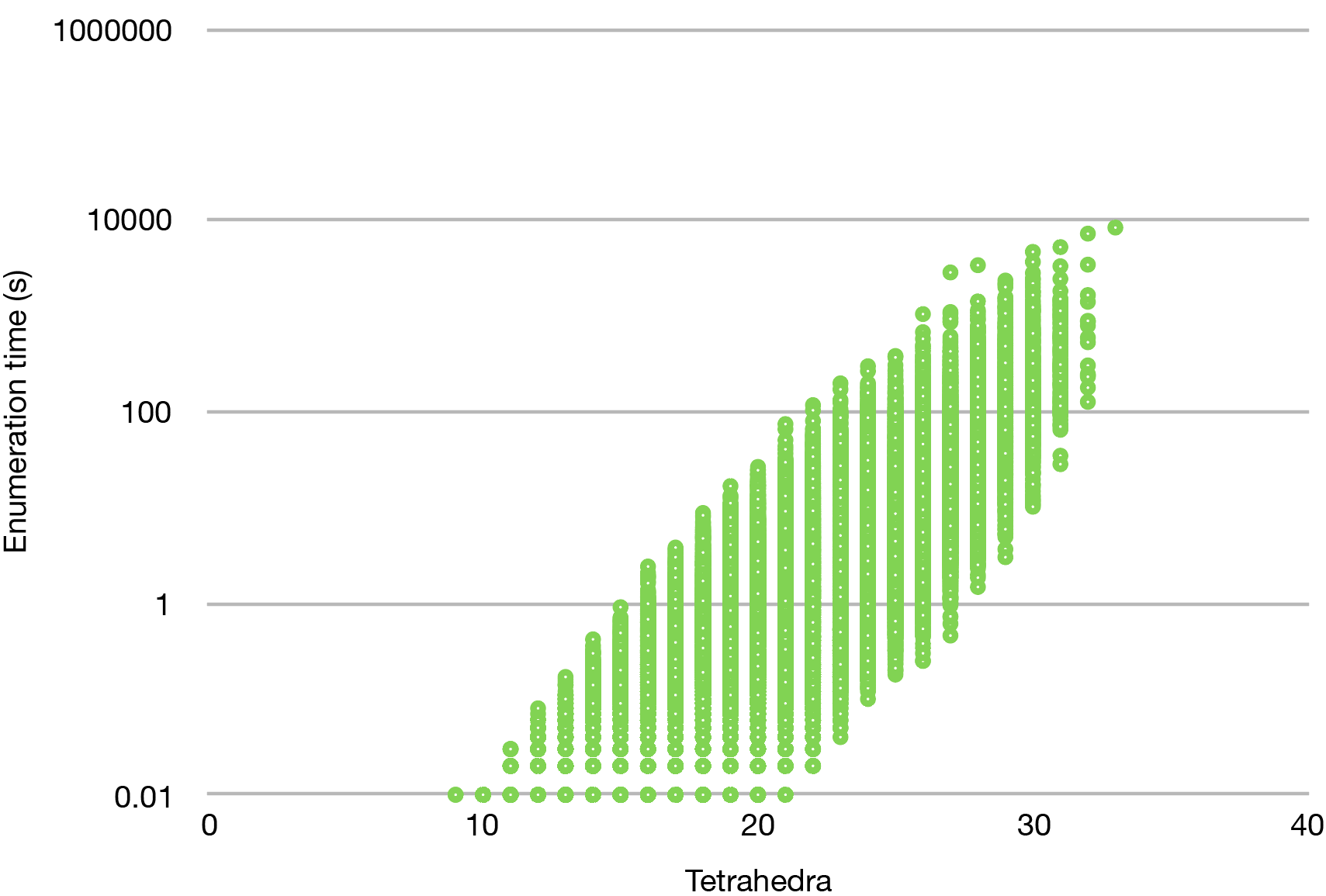}
  \qquad\qquad
  \includegraphics[width=6cm]{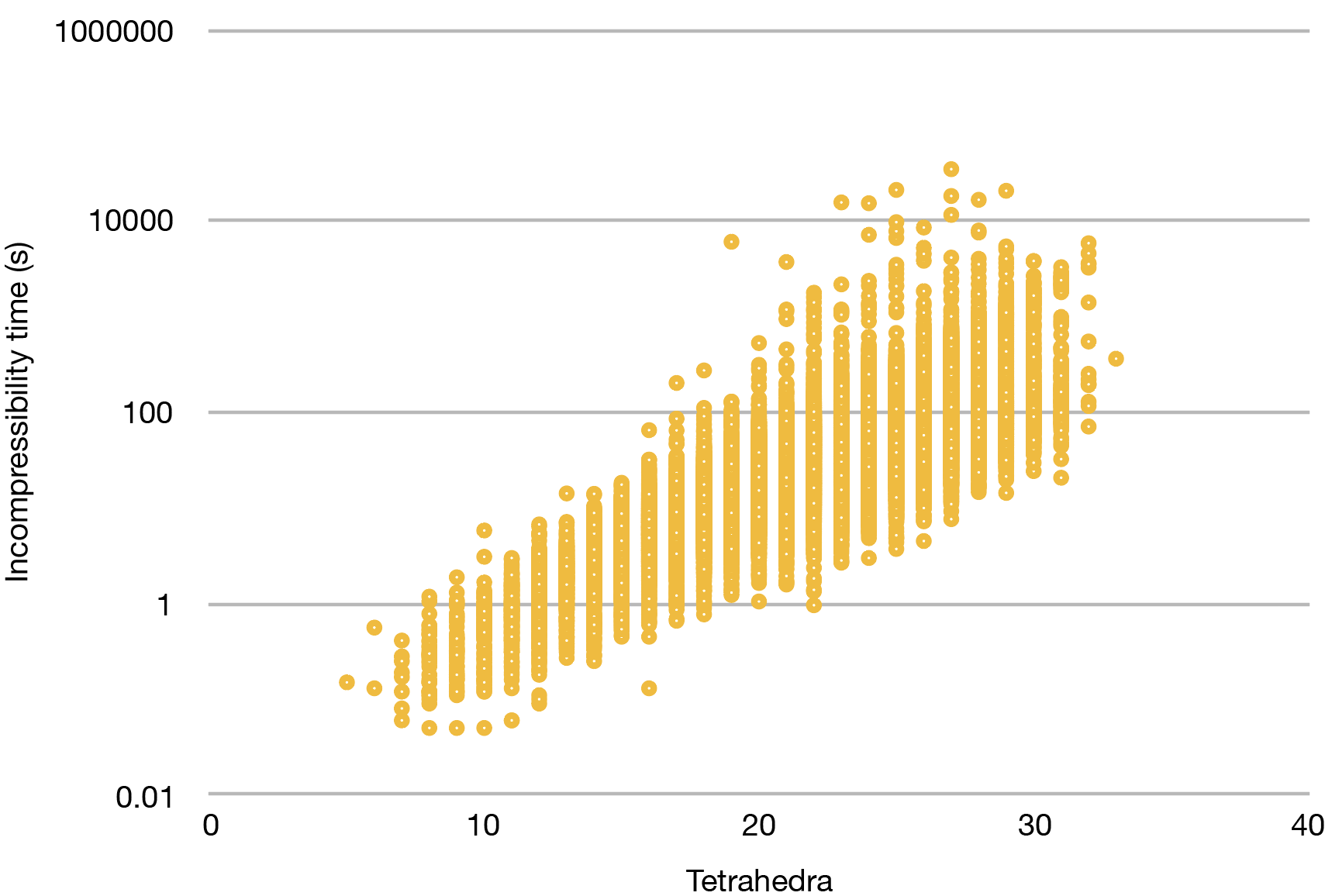}
\end{center}
\caption{Timing for the Hoste-Thistlethwaite-Weeks census}
\label{fig:htw-time}
\end{figure}

We give three different perspectives of the percentage of knot diagrams containing closed essential surfaces (colloquially called \emph{large knots}). 
Namely, by number of crossings (\Cref{fig:htw-bycross}), by number of tetrahedra (\Cref{fig:htw-bytet}) and by volume (\Cref{fig:htw-byvol}). 

\begin{figure}[h!]
    \centering
     \subfigure[Pointwise]{%
        \label{fig:htw-bycross-pointwise}%
  \includegraphics[width=6cm]{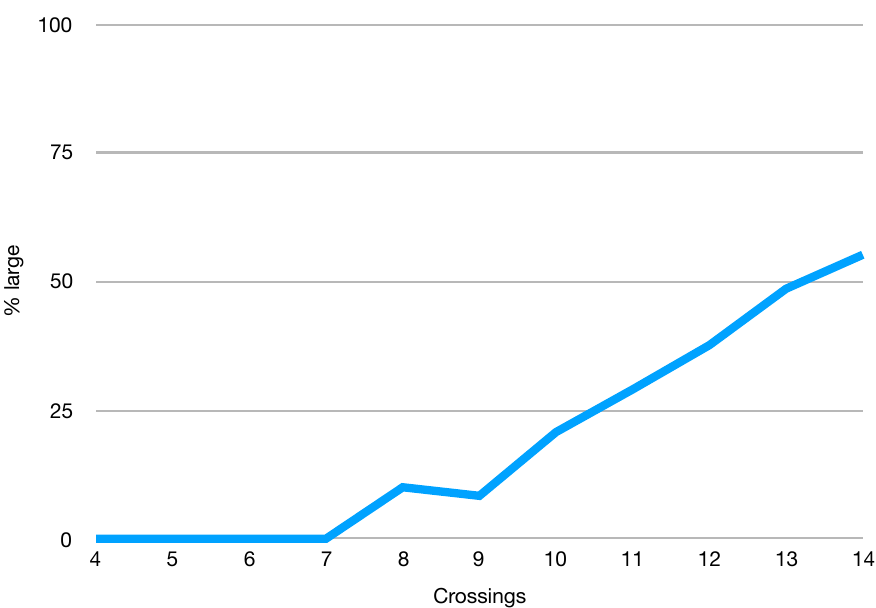}}
  \qquad\qquad
      \subfigure[Cumulative]{%
        \label{fig:htw-bycross-cumulative}%
  \includegraphics[width=6cm]{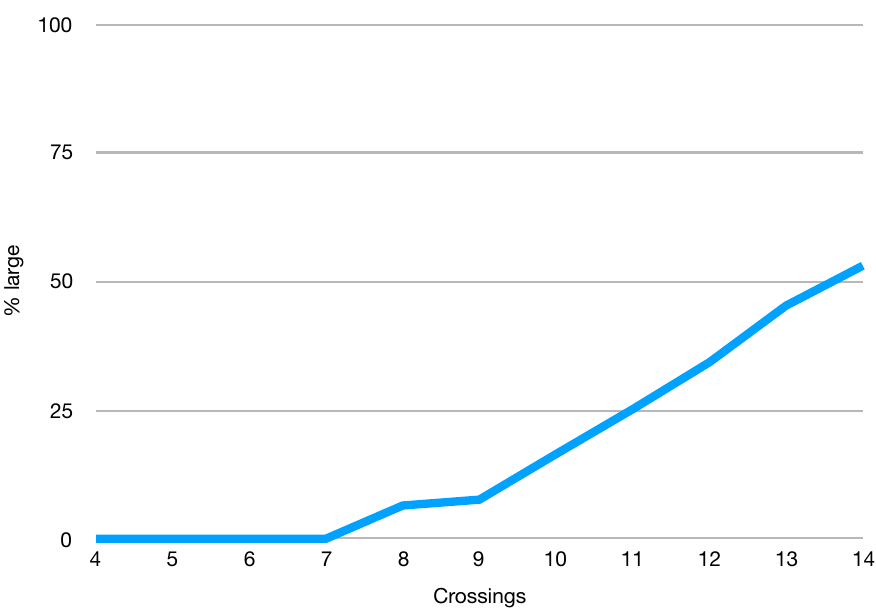}}
\caption{Percentage of large knots by number of crossings}
\label{fig:htw-bycross}
\end{figure}

\begin{figure}[h]
    \centering
     \subfigure[Pointwise]{%
        \label{fig:htw-bytet-pointwise}%
  \includegraphics[width=6cm]{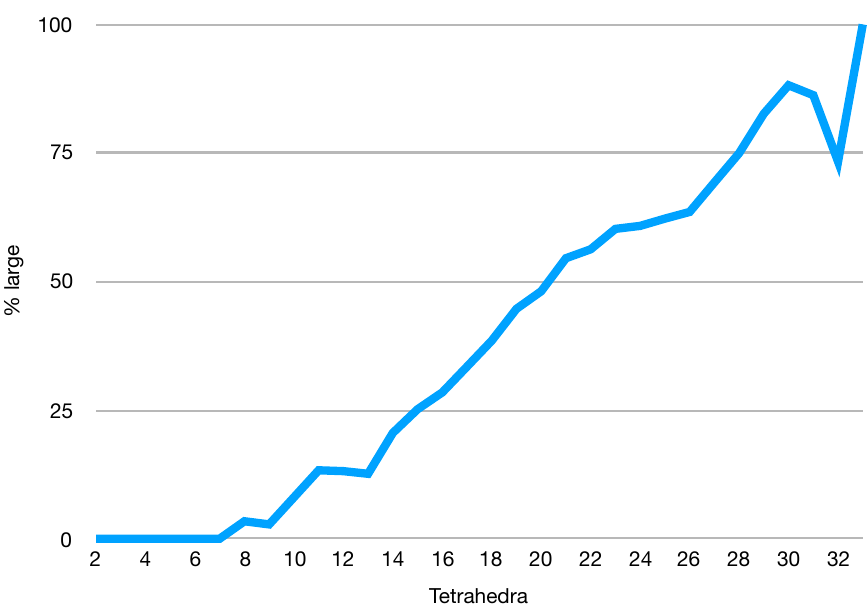}}
  \qquad\qquad
      \subfigure[Cumulative]{%
        \label{fig:htw-bytet-cumulative}%
  \includegraphics[width=6cm]{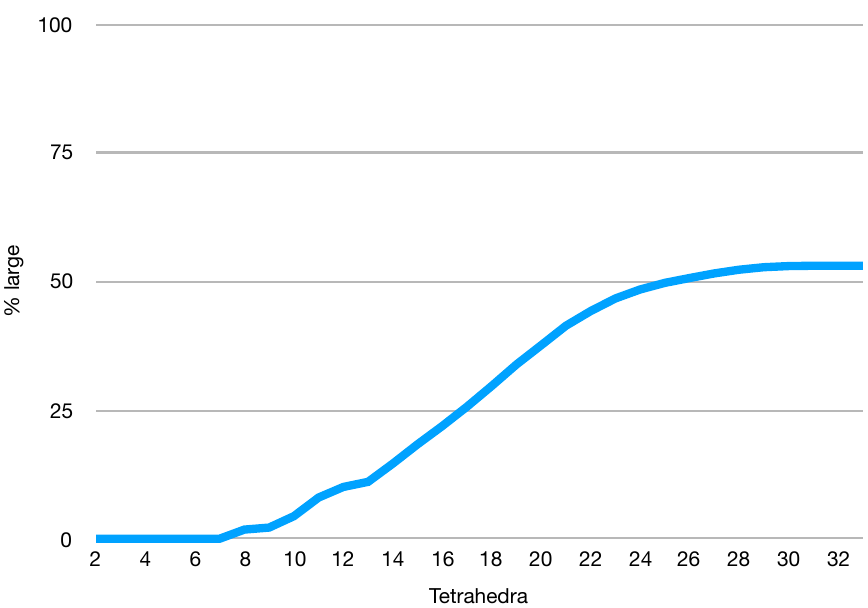}}
\caption{Percentage of large knots by number of tetrahedra}
\label{fig:htw-bytet}
\end{figure}

We remark that for large numbers of tetrahedra, there are only few knot complements (see \Cref{fig:htw-tvc}), which explains in \Cref{fig:htw-bytet} both the levelling out of the cumulative plot as well as the jumps in the pointwise plot.


\subsection*{Acknowledgements} 

The authors are  supported  by  the  Australian  Research  Council  under  the  Discovery  Projects funding scheme (DP150104108 and DP160104502, respectively). On behalf of all authors, the corresponding author states that there is no conflict of interest.

%
%

\bibliographystyle{amsplain}
\bibliography{pure}

\bigskip

%
%

\address{Benjamin A. Burton\\School of Mathematics and Physics,\\The University of Queensland,\\QLD 4072 Australia\\(bab@maths.uq.edu.au)\\---}

\address{Stephan Tillmann,\\ School of Mathematics and Statistics F07,\\ The University of Sydney,\\ NSW 2006 Australia\\
(stephan.tillmann@sydney.edu.au)}

\Addresses

\end{document}